\title[Balanced labellings of affine diagrams]{Diagrams of affine permutations, balanced labellings and symmetric functions}
\author{Hwanchul Yoo and Taedong Yun}
\date{}
\newcommand{\ZZ}{\mathbb{Z}}
\newcommand{\AUU}{\widetilde{\mathcal{U}}}
\newcommand{\UU}{\mathcal{U}}
\newcommand{\R}{\mathcal{R}}
\newcommand{\B}{\mathcal{B}}
\newcommand{\CB}{\mathcal{CB}}
\newcommand{\cell}{\overline}
\newcommand{\newword}[1]{\emph{#1}}
\newcommand{\id}{\mathrm{id}}
\newcommand{\rmin}{\mathrm{rmin}}
\newcommand{\bmin}{\mathrm{bmin}}
\def\AF{\widetilde{F}}
\def\ASigma{\widetilde{\Sigma}}
\newtheorem{conj}{Conjecture}[section]
\newtheorem{theorem}[conj]{Theorem}
\newtheorem{proposition}[conj]{Proposition}
\newtheorem{lemma}[conj]{Lemma}
\newtheorem{corollary}[conj]{Corollary}
\theoremstyle{definition}
\newtheorem{definition}[conj]{Definition}
\newtheorem{remark}[conj]{Remark}
\newtheorem{example}[conj]{Example}
\newtheorem*{acknowledgement}{Acknowledgement}
\begin{document}


\begin{abstract}
We generalize the work of Fomin, Greene, Reiner, and Shimozono on balanced labellings in two directions: (1) we define the diagrams of affine permutations and the balanced labellings on them; (2) we define the set-valued version of the balanced labellings. We show that the column-strict balanced labellings on the diagram of an affine permutation yield the affine Stanley symmetric function defined by Lam, and that the column-strict set-valued balanced labellings yield the affine stable Grothendieck polynomial of Lam. Moreover, once we impose suitable flag conditions, the flagged column-strict set-valued balanced labellings on the diagram of a finite permutation give a monomial expansion of the Grothendieck polynomial of Lascoux and Sch\"{u}tzenberger. We also give a necessary and sufficient condition for a diagram to be an affine permutation diagram.
\end{abstract}

\maketitle
\tableofcontents

\section{Introduction}

The \emph{diagram}, or the \emph{Rothe diagram} of a permutation is a widely used technique to visualize the inversions of the permutation on the plane. It is well known that there is a one-to-one correspondence between the permutations and the set of their inversions.

\emph{Balanced labellings} are labellings of the diagram $D(w)$ of a permutation $w\in \Sigma_n$ such that each cell of the diagram is \emph{balanced}. They are defined in \cite{Fomin1997} to encode reduced decompositions of the permutation $w$. There is a notion of \emph{injective labellings} which generalize both standard Young tableaux and Edelman-Greene's balanced tableaux \cite{Edelman1987}, and \emph{column-strict labellings} which generalize semi-standard Young tableaux. Column-strict labellings yield symmetric functions in the same way semi-standard Young tableaux yield Schur functions. In fact, these symmetric functions $F_w(x)$ are the \emph{Stanley symmetric functions}, which was introduced to calculate the number of reduced decompositions in $\Sigma_n$ \cite{Stanley1984}. The Stanley symmetric function coincides with the Schur function when $w$ is a \emph{Grassmannian} permutation. Furthermore, if one imposes flag conditions on column strict labellings, they yield Schubert polynomial of Lascoux and Sch\"utzenberger \cite{Lascoux1985}. One can directly observe the limiting behaviour of Schubert polynomials (e.g. stability, convergence to $F_w(x)$, etc.) in this context. 

The main purpose of this paper is to extend the idea of diagrams and balanced labellings in two directions. We first define the diagrams of affine permutations and balanced labellings on them. Following the footsteps of \cite{Fomin1997}, we show that the column strict labellings on an affine permutation diagram yield the affine Stanley symmetric function defined by Lam in \cite{Lam2006}. When an affine permutation is \emph{$321$-avoiding}, the balanced labellings coincide with semi-standard cylindric tableaux, and they yield the cylindric Schur function of Postnikov \cite{Postnikov2005}.

Secondly, we define the \emph{set-valued} version of the balanced labellings of an affine permutation. In the case of $321$-avoiding finite permutations (or, equivalently, skew Young diagrams $\lambda / \mu$), our definition of the set-valued balanced labellings coincides with the \emph{set-valued tableaux} appearing in \cite{Buch2002}. We show that the column-strict set-valued balanced labellings on an affine permutation diagram give a monomial expansion of the affine stable Grothendieck polynomial of Lam \cite{Lam2006}. Moreover, in the case of finite permutations, we impose flag conditions to the column-strict set-valued balanced labellings to get an expansion of the Grothendieck polynomial of Lascoux and Sch\"{u}tzenberger \cite{Lascoux1983}.


An interesting byproduct of balanced labellings is a complete characterization of diagrams of affine permutations using the notions of a \emph{content}. We will introduce the notion of a \emph{wiring diagram} of an affine permutation diagram in the process, which generalizes Postnikov's wiring diagram of Grassmannian permutations \cite{Postnikov2006}.

\begin{remark}
This paper contains the full version of the extended abstract submitted to FPSAC'13.
\end{remark}


\section{Permutation Diagrams and Balanced Labellings}

\subsection{Permutations and Affine Permutations}~\label{sec:permutation}

Let $\Sigma_n$ denote the \emph{symmetric group}, the group of all permutations of size $n$. $\Sigma_n$ is generated by the \emph{simple reflections} $s_1, \ldots, s_{n-1}$, where $s_i$ is the permutation which interchanges the entries $i$ and $i+1$, and the following relations.
\begin{eqnarray*}
&s_i^2=1 &\text{for all } i \\
&s_i s_{i+1} s_i = s_{i+1} s_i s_{i+1} &\text{for all } i \\
&s_i s_j = s_j s_i &\text{for }|i-j|\ge 2
\end{eqnarray*}
In this paper, we will often call a permutation a \emph{finite} permutation and the symmetric group the \emph{finite} symmetric group to distinguish them from its affine counterpart.

On the other hand, the \emph{affine symmetric group} $\ASigma_n$ is the group of all \emph{affine permutations} of period $n$. A bijection $w: \ZZ \rightarrow \ZZ$ is called an affine permutation of period $n$ if $w(i + n) = w(i) + n$ and $\sum_{i = 1}^n w(i) = n(n+1)/2$. An affine permuation is uniquely determined by its \emph{window}, $[w(1), \ldots, w(n)]$, and by abuse of notation we write $w = [w(1), \ldots, w(n)]$ (window notation).

We can describe the group $\ASigma_n$ by its generators and relations as we did with $\Sigma_n$. The generators are $s_0, s_1, \ldots, s_{n-1}$ where $s_i$ interchanges all the \emph{periodic} pairs $\{ (kn + i, kn + i + 1) \mid k \in \ZZ \}$. With these generators we have exactly the same relations
\begin{eqnarray*}
&s_i^2=1 &\text{for all } i \\
&s_i s_{i+1} s_i = s_{i+1} s_i s_{i+1} &\text{for all } i \\
&s_i s_j = s_j s_i &\text{for }|i-j|\ge 2
\end{eqnarray*}
but here all the indices are taken modulo $n$, i.e. $s_{n+i} = s_{i}$. Note that the symmetric group can be embedded into the affine symmetric group by sending $s_i$ to $s_i$. With this embedding, we will identify a finite permutation $w = [w_1, \ldots, w_n]$ with the affine permutation $[w_1, \ldots, w_n]$ written in the window notation. 

A \emph{reduced decomposition} of $w$ is a decomposition $w=s_{i_1}\cdots s_{i_\ell}$ where $\ell$ is the minimal number for which such a decomposition exists. In this case, $\ell$ is called the \emph{length} of $w$ and denoted by $\ell(w)$. The word $i_1i_2\cdots i_\ell$ is called a \emph{reduced word} of $w$. It is well-known that the length of an affine permutation $w$ is the same as the cardinality of the set of \emph{inversions}, $\{(i,j) \mid 1 \leq i \leq n,~ i<j,~ w(i)>w(j)\}$.

The \emph{diagram}, or \emph{affine permutation diagram}, of $w\in\ASigma_n$ is the set 
$$D(w)=\{(i,w(j)) \mid i<j, w(i)>w(j)\}\subseteq \ZZ\times \ZZ.$$
This is a natural generalization of the \emph{Rothe diagram} for finite permutations. When $w$ is finite, $D(w)$ consists of infinite number of identical copies of the Rothe diagram of $w$ diagonally. From the construction it is clear that $(i,j)\in D(w) \Leftrightarrow (i+n,j+n)\in D(w)$.

Throughout this paper, we will use a matrix-like coordinate system on $\ZZ\times\ZZ$: The vertical axis corresponds to the first coordinate increasing as one moves toward south, and the horizontal axis corresponds to the second coordinate increasing as one moves toward east. We will visualize $D(w)$ as a collection of unit square lattice boxes on $\ZZ\times\ZZ$ whose coordinates are given by $D(w)$.

\begin{figure}
\centering
\includegraphics{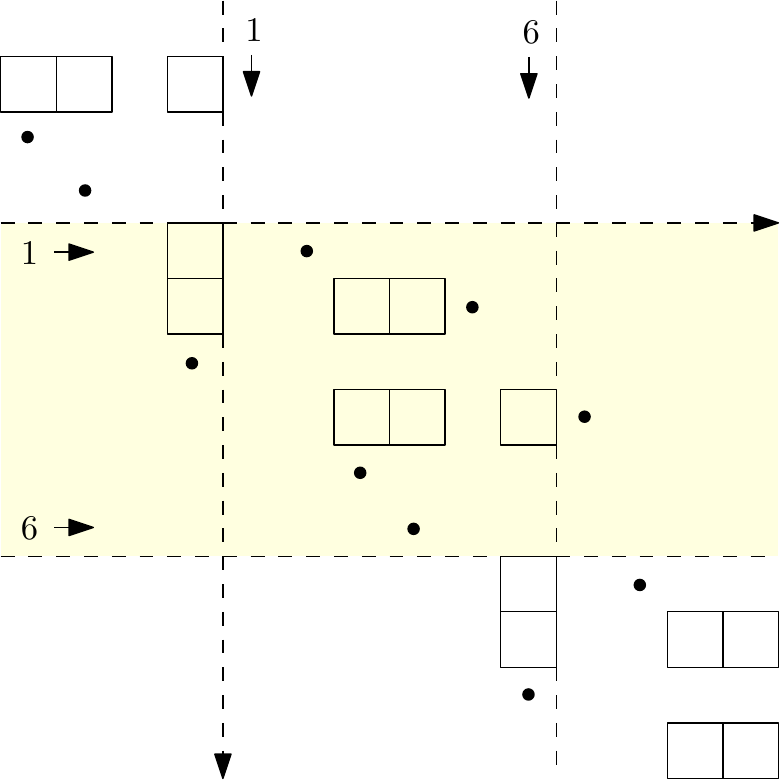}
\caption{diagram of $[2,5,0,7,3,4] \in \ASigma_6$}
\label{fig:diag}
\end{figure}

\bigskip

\subsection{Diagrams and Balanced Labellings}~\label{sec:diagram_balanced}

We call a collection $D$ of unit square lattice boxes on $\ZZ\times\ZZ$ an \emph{affine diagram }(\emph{of period} $n$) if there are finite number of cells on each row and column, and $(i,j)\in D \Leftrightarrow (i+n,j+n)\in D$. Obviously $D(w)$ is an affine diagram of period $n$. In an affine diagram, the collection of boxes $\{(i+rn,j+rn)\mid r\in\ZZ\}$ are called the \emph{cell} of $D$, and we will denote it by $\cell{(i,j)}$. From the periodicity, we can take the representative of each cell $\cell{(i,j)}$ in the first $n$ rows $\{1,2,\ldots,n\}\times \ZZ$, called the \emph{fundamental window}. Each horizontal strap $\{1+rn,\cdots,n+rn\}\times\ZZ$ for some $r\in\ZZ$ will be called a \emph{window}. The intersection of $D$ and the fundamental window will be denoted by $[D]$. The boxes in $[D]$ are the natural representatives of the cells of $D$. An affine diagram $D$ is said to be of the size $\ell$ if the number of boxes in $[D]$ is $\ell$. Note that the size of $D(w)$ for $w\in\ASigma_n$ is the length of $w$.

\begin{example}
The length of an affine permutation $w = [2,5,0,7,3,4] \in \ASigma_6$ in Figure~\ref{fig:diag} is $7$, e.g. $w = s_0 s_4 s_5 s_3 s_4 s_1 s_2$, and hence its fundamental window (shaded region) contains $7$ boxes. The dots represent the permutation and the square boxes represent the diagram in the figure.
\end{example}

To each cell $(i,j)$ of an affine diagram $D$, we associate the \emph{hook} $H_{i,j}:=H_{i,j}(D)$ consisting of the cells $(i',j')$ of $D$ such that either $i'=i$ and $j'\ge j$ or $i'\ge i$ and $j'=j$. The cell $(i,j)$ is called the \emph{corner} of $H_{i,j}$.

\begin{definition}[Balanced hooks]~\label{def:bal_hook}
A labellings of the cells of $H_{i,j}$ with positive integers is called \emph{balanced} if it satisfies the following condition: if one rearranges the labels in the hook so that they weakly increase from right to left and from top to bottom, then the corner label remains unchanged.
\end{definition}

\begin{figure}
\centering
\includegraphics{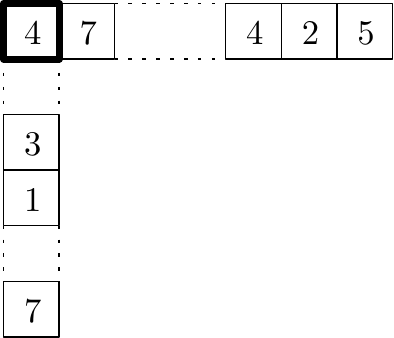}
\caption{balanced hook}
\label{fig:bhook}
\end{figure}

A labelling of an affine diagram is a map $T:D\rightarrow \ZZ_{>0}$ from the boxes of $D$ to the positive integers such that $T(i,j)=T(i+n,j+n)$ for all $(i,j)\in D$. In other words, it sends each cell $\cell{(i,j)}$ to some positive integer. Therefore if $D$ has size $\ell$, there can be at most $\ell$ different numbers for the labels of the boxes in $D$.

\begin{definition}[Balanced labellings]
Let $D$ be an affine diagram of the size $\ell$.
\begin{enumerate}
\item A labelling of $D$ is \emph{balanced} if each hook $H_{i,j}$ is balanced for all $(i,j)\in D$.
\item A balanced labelling is \emph{injective} if each of the labels $1,\cdots,\ell$ appears exactly once in $[D]$.
\item A balanced labelling is \emph{column strict} if no column contains two equal labels.
\end{enumerate}
\end{definition}

\begin{figure}
\centering
\includegraphics{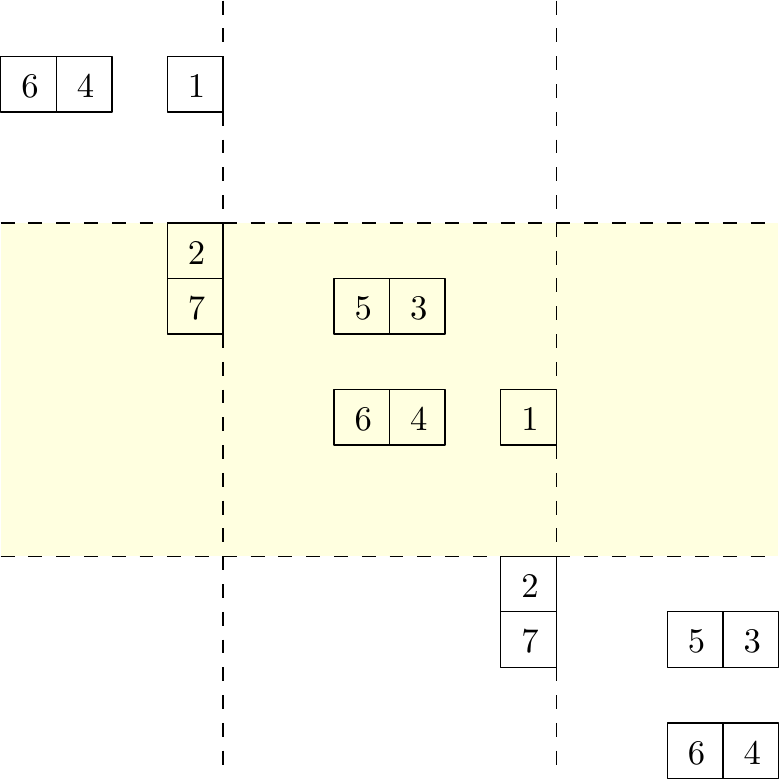}
\caption{injective balanced labelling}
\label{fig:blabel}
\end{figure}

\bigskip

\subsection{Injective Labellings and Reduced Words}~\label{sec:injective} 

Given $w\in\ASigma_n$ and its reduced decomposition $w=s_{a_1}\cdots s_{a_\ell}$, 
we read from left to right and interpret $s_k$ as adjacent transpositions switching the numbers at $(k+rn)$-th and $(k+1+rn)$-th positions, for all $r\in\ZZ$. In other words, $w$ can be obtained from applying the sequence of transpositions $s_{a_1}, s_{a_2}, \ldots, s_{a_\ell}$ to the identity permutation. It is clear that each $s_i$ corresponds to a unique inversion of $w$. Here, an \emph{inversion} of $w$ is a family of pairs $\{(w(i+rn),w(j+rn))\mid r\in\ZZ\}$ where $i<j \text{ and } w(i)>w(j)$. Note that $w(i+rn)> w(j+rn) ~\Leftrightarrow~ w(i) > w(j)$. Often we will ignore $r$ and use a representative of pairs when we talk about the inversions. On the other hand, each cell of $D(w)$ also corresponds to a unique inversion of $w$. In fact, $(i,j)\in D(w)$ if and only if $(w(i),j)$ is an inversion of $w$. 

\begin{definition}[Canonical labelling]
Let $w\in\ASigma_n$ be of length $\ell$, and $a = a_1 a_2 \cdots a_\ell$ be a reduced word of $w$. Let $T_a:D\rightarrow \{1,\cdots,\ell\}$ be the injective labelling defined by setting $T_a(i,w(j))=k$ if $s_{a_k}$ transposes $w(i)$ and $w(j)$ in the partial product $s_{a_1}\cdots s_{a_k}$ where $w(i)>w(j)$. Then $T_a$ is called the \emph{canonical labelling} of $D(w)$ induced by $a$.
\end{definition}

\begin{figure}
\centering
\includegraphics{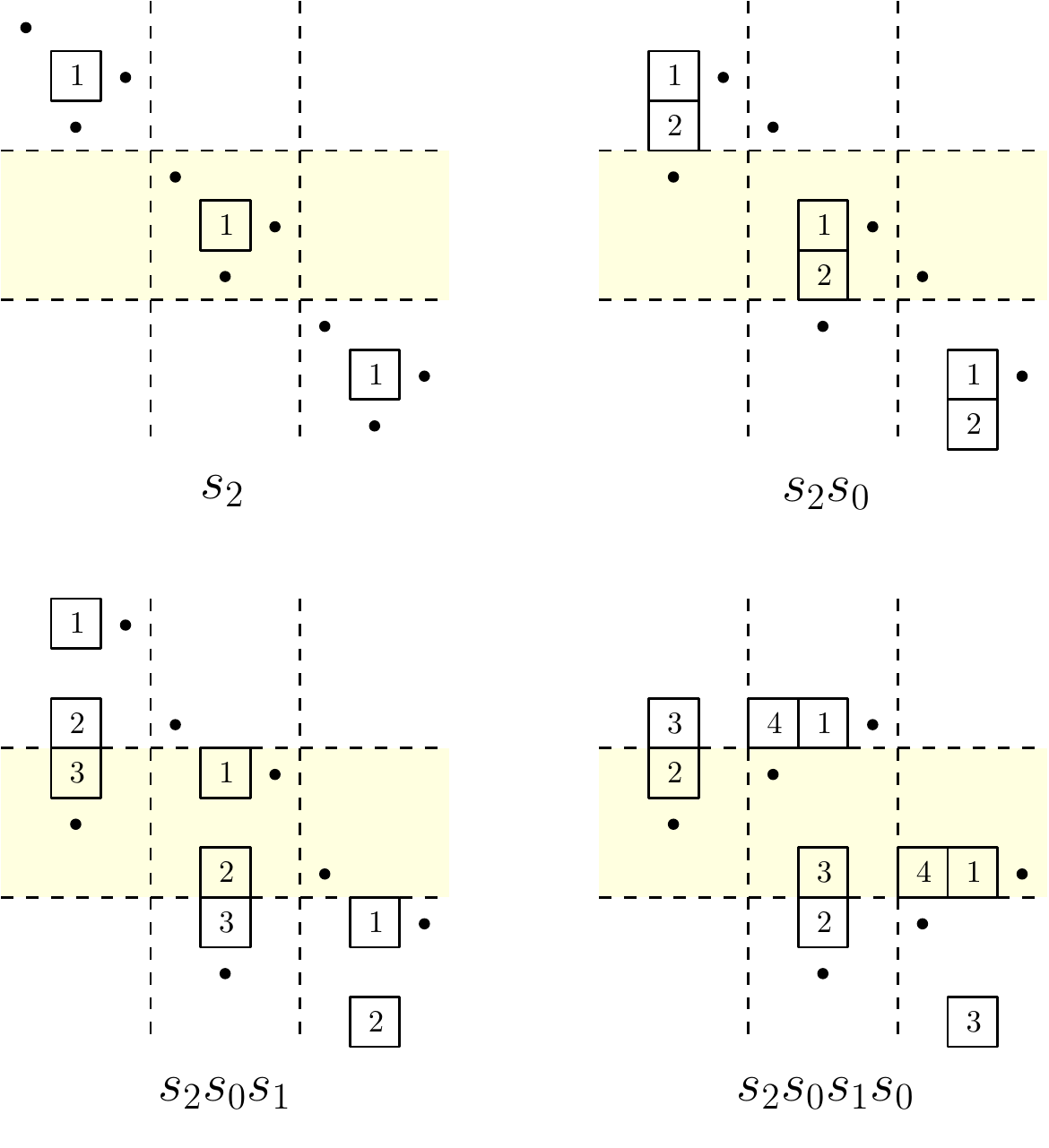}
\caption{canonical labelling of $s_2 s_0 s_1 s_0 \in \ASigma_3$}
\label{fig:canonical}
\end{figure}

\begin{proposition}~\label{prop:canonical_balanced}
A canonical labelling of a reduced word of an affine permutation $w$ is an injective balanced labelling.
\end{proposition}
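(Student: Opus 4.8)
The plan is to reduce the affine statement to the classical one of Fomin--Greene--Reiner--Shimozono by a local, windowed argument. Fix a reduced word $a = a_1 \cdots a_\ell$ of $w$ and the canonical labelling $T_a$ of $D(w)$. Since both $D(w)$ and $T_a$ are periodic with period $n$ (by construction $T_a(i+n,j+n) = T_a(i,j)$), it suffices to check that every hook $H_{i,j}(D(w))$ with corner $(i,j)$ is balanced; and by periodicity we may further assume the corner cell lies in the fundamental window. The key point is that a single hook $H_{i,j}$, although it may stretch across many windows to the east and to the south, involves only finitely many cells, and all of these cells correspond to inversions $(w(i'),j')$ of $w$ with $i'=i$ or $j'=j$. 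So the combinatorics of a single hook is governed by a finite sub-picture, and I want to compare it to the finite-permutation situation.

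First I would set up the translation between cells of $H_{i,j}$ and the positions in the one-line-extended permutation $w\colon \ZZ \to \ZZ$ that get transposed during the partial products $s_{a_1}\cdots s_{a_k}$. Concretely, a cell $(i,w(j)) \in D(w)$ in the arm of the hook (same row $i$, column $w(j) \ge w(j_0)$ where $(i,w(j_0))$ is the corner) gets the label $k$ where $s_{a_k}$ is the transposition that, in the left-to-right product, swaps the value originally tracked in position $i$ past the value tracked in position $j$. The cells in the leg (same column, lower rows) are described symmetrically. The crucial observation — which is exactly what makes the classical proof work — is this: track the ``strand'' starting at position $i$. As we multiply $s_{a_1}, s_{a_2}, \dots$ one at a time, this strand moves around, and each time it crosses the strand coming from some position $j>i$ with $w(i)>w(j)$ — equivalently, each arm cell of the hook — we record a label. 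The sequence of these labels, read in the order the crossings occur, is precisely the subword of $a$ consisting of the letters that move strand $i$; and the balanced condition for $H_{i,j}$ is a statement about how the corner crossing (the crossing with strand $j_0$) sits inside this subword together with the leg crossings (crossings of strand $j_0$ with strands below it).

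The main technical step is then a \emph{localization lemma}: for the purpose of deciding whether $H_{i,j}$ is balanced, one may replace $w$ by a suitable finite permutation $v \in \Sigma_N$ (for $N$ large, e.g. $N$ exceeding all the relevant positions and values in the finite window containing the hook) whose Rothe diagram agrees with $D(w)$ in a neighborhood of the hook and whose reduced word, obtained by restricting $a$ appropriately, induces the same labels on $H_{i,j}$. Once this is in place, the balancedness of $H_{i,j}$ follows verbatim from the finite case of \cite{Fomin1997}: there, the canonical labelling of a reduced word of a finite permutation is shown to be balanced by exactly the strand-crossing analysis sketched above. So the structure is: (1) reduce to one hook with corner in the fundamental window; (2) express the hook labels via the strand-crossing subwords of $a$; (3) localize to a finite permutation; (4) invoke the finite result.

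The step I expect to be the main obstacle is (3), the localization. The subtlety is that $a$ need not restrict to a reduced word of a nice finite permutation on the nose — letters of $a$ that move strands far away from the hook can still interact with the hook's strands indirectly, and the periodic identification $s_{n+i}=s_i$ means the ``same'' generator acts on infinitely many strand pairs at once. I would handle this by choosing $N$ large enough to unfold several periods, replacing each periodic generator $s_i$ in the relevant range by the corresponding honest adjacent transposition $\sigma_{i+rn} \in \Sigma_N$ for the one value of $r$ that matters for the hook, and arguing that deleting the letters whose crossings are irrelevant to $H_{i,j}$ yields a (not necessarily reduced, but harmless) word whose canonical-labelling restriction to the hook is unchanged; the balancedness of a hook depends only on the multiset of labels in that hook and the position of the corner label, both of which are preserved. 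Alternatively — and this may be cleaner — one can bypass localization entirely and redo the strand-crossing argument directly in $\ASigma_n$, noting that the finitely many crossings relevant to a single hook occur among finitely many strands over a finite stretch of the product, so the classical argument applies without modification once phrased in terms of strands rather than of a global finite permutation.
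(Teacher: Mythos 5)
Your route is genuinely different from the paper's. The paper first proves a localization lemma (Lemma~\ref{lem:localization}): a labelling of $D(w)$ is balanced if and only if, for every triple $i<j<k$, its restriction to the subdiagram $D_{ijk}$ determined by rows $i,j,k$ and columns $w(i),w(j),w(k)$ is balanced. With that in hand, the proposition reduces to a three-strand statement: in each of the three nontrivial configurations (two horizontal boxes, two vertical boxes, three boxes in a ``$\Gamma$'') one only has to track the order in which the three values $p<q<r$ pass through one another along the reduced word, which is a short argument. You instead attack a whole hook at once and try to import the finite result of Fomin--Greene--Reiner--Shimozono. That can be made to work, but it forgoes the localization lemma, which the paper reuses repeatedly afterwards (Theorem~\ref{thm:removing_border_balanced} on removing border cells, and the definition of set-valued balancedness in Section~3 is stated directly in terms of the $3\times 3$ subdiagrams), so the paper's detour buys more than this one proposition.

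The soft spot is exactly where you expect it, step (3), and as written it would fail: the canonical labelling is defined only for reduced words, so ``deleting the letters whose crossings are irrelevant'' and accepting a ``not necessarily reduced, but harmless'' word leaves you with nothing to compare --- a non-reduced word can create, destroy, and re-create the same inversion, and the label of a hook cell is then not well-defined. Moreover ``the one value of $r$ that matters'' for a periodic generator is time-dependent, since the relevant strands migrate across windows as the word is applied. The clean repair is your own fallback: drop the unfolding and argue with strands directly. Fix the finitely many strands meeting the hook (the row strand $i$, the column strand $j_0$, the arm strands, and the leg strands). In a reduced word of an affine permutation every pair of strands crosses at most once, because each letter creates exactly one new inversion and the total number of crossings equals $\ell(w)$; hence the crossings among the chosen strands, read in time order, form a reduced word of the finite permutation given by their relative order, and this word induces the same labels on the corresponding hook of that finite permutation's Rothe diagram. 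The finite hook argument then applies verbatim. With that substitution for step (3), your proof is correct.
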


Before we give a proof of Proposition~\ref{prop:canonical_balanced}, we introduce our main tool for proving that a given labelling is balanced. The following lemma is closely related to the notion of \emph{normal ordering} in a root system. 

\begin{lemma}[Localization]\label{lem:localization}
Let $w\in\ASigma_n$ and let $T$ be a column strict labelling of $D(w)$. Then $T$ is balanced if and only if for all integers $i < j < k$ the restriction of $T$ to the sub-diagram
of $D(w)$ determined by the intersections of rows $i,j,k$ and columns $w(i),w(j),w(k)$ is balanced.
\end{lemma}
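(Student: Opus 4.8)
The plan is to replace the ``rearrange and keep the corner'' condition by a pair of numerical inequalities, and then to recognise those inequalities both as consequences of, and as forcing, the three-cell conditions. First I would record the following reformulation of Definition~\ref{def:bal_hook}: if $T$ is column strict, $(a,b)\in D(w)$, and $c=T(a,b)$, then since column-strictness forbids the corner value from reappearing in the leg of $H_{a,b}$, the hook $H_{a,b}$ is balanced if and only if every label $<c$ fits into the arm and every label $>c$ fits into the leg, i.e.
\[
\#\{(a,b')\in H_{a,b}:\ b'>b,\ T(a,b')>c\}\ \le\ \#\{(a',b)\in H_{a,b}:\ a'>a,\ T(a',b)<c\}\ \le\ \#\{(a,b')\in H_{a,b}:\ b'>b,\ T(a,b')\ge c\}.
\]
This ``$\mathrm{arm}{>}c$'' $\le$ ``$\mathrm{leg}{<}c$'' $\le$ ``$\mathrm{arm}{\ge}c$'' form of balance is the one that makes localization transparent, and it is precisely the inequality governing normal orderings of the root system alluded to before the lemma.

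Next I would translate arm and leg cells into inversions. Writing $b=w(p)$, so that $(a,b)$ records the inversion $(a,p)$, the leg cells of $H_{a,b}$ are the $(q,w(p))$ with $a<q<p$ and $w(q)>w(p)$, and the arm cells are the $(a,w(q))$ with $a<q$ and $w(p)<w(q)<w(a)$; these two families are matched by the positions $q$ with $a<q<p$ and $w(p)<w(q)<w(a)$, each such $q$ yielding both a leg cell $(q,w(p))$ and an arm cell $(a,w(q))$, with $\{a,q,p\}$ a $321$-pattern whose sub-diagram is the triangle $\{(a,w(p)),(a,w(q)),(q,w(p))\}$ with corner $(a,w(p))$. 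The unmatched leg cells correspond to $q$ with $a<q<p$, $w(q)>w(a)$ (a $231$-pattern on $\{a,q,p\}$, sub-diagram the vertical domino $\{(a,w(p)),(q,w(p))\}$) and the unmatched arm cells to $q>p$ with $w(p)<w(q)<w(a)$ (a $312$-pattern on $\{a,p,q\}$, sub-diagram the horizontal domino $\{(a,w(p)),(a,w(q))\}$). Consequently every restriction of $D(w)$ to three rows $i<j<k$ and columns $w(i),w(j),w(k)$ has at most three cells: a single cell imposes nothing, a vertical domino is balanced iff it strictly increases downward, a horizontal domino iff it weakly decreases rightward, and a triangle iff its corner value lies weakly between its arm and leg values.

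For the implication ``all three-cell restrictions balanced $\Rightarrow$ $T$ balanced'' I would fix $H_{a,b}=H_{a,w(p)}$ and feed the classification into the displayed inequality: the $231$-restrictions force every unmatched leg cell to have value $>c$, the $312$-restrictions force every unmatched arm cell to have value $\le c$, and for each matched $q$ the balanced triangle on $\{a,q,p\}$ forces $T(a,w(q))$ and $T(q,w(p))$ onto opposite sides of $c$ (with $T(a,w(q))=c$ allowed). Then the left count sees only matched $q$ with $T(a,w(q))>c$, each of which has $T(q,w(p))<c$, so it is $\le\#\{q\ \mathrm{matched}:T(q,w(p))<c\}=\#\{\mathrm{leg}<c\}$ since unmatched leg cells exceed $c$; and each matched $q$ with $T(q,w(p))<c$ has $T(a,w(q))\ge c$, so $\#\{\mathrm{leg}<c\}\le\#\{q\ \mathrm{matched}:T(a,w(q))\ge c\}\le\#\{\mathrm{arm}\ge c\}$. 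Hence $H_{a,b}$ is balanced, and so is $T$.

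The converse is where I expect the real work to lie. Assuming $T$ balanced, suppose some three-cell restriction fails; by the classification this means that for some $H_{a,w(p)}$ an unmatched leg cell is $<c$, or an unmatched arm cell is $>c$, or a matched triangle on $\{a,q,p\}$ fails betweenness. The subtlety is that the double inequality for $H_{a,w(p)}$ \emph{by itself} excludes none of these: a bad cell only forces, through that inequality, the existence of a compensating arm or leg cell, whose own hook then forces a further constraint, and one must follow this chain through the finitely many cells of $[D]$ until reaching a hook whose two counts genuinely violate the inequality. I would package this as an induction on $\ell(w)$ (after using periodicity to place $i,j,k$ in one window): in a minimal counterexample, delete a row $m\notin\{i,j,k\}$ together with its column $w(m)$, obtaining a smaller permutation diagram on which the induced labelling is again balanced — the point being that whenever a hook loses both a leg cell $(m,\cdot)$ and an arm cell $(\cdot,w(m))$, those two cells and its corner form a matched triangle on a triple containing $m$, balanced by minimality, which exactly compensates the two lost counts — while the offending restriction survives, a contradiction. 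The delicate point of the lemma is precisely this deletion step, and in particular the boundary case where the deleted arm label equals the corner label; that is where the argument must be most careful.
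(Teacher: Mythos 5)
Your reformulation of balancedness as the double inequality $\#\{\mathrm{arm}>c\}\le\#\{\mathrm{leg}<c\}\le\#\{\mathrm{arm}\ge c\}$ (valid precisely because column-strictness forbids leg labels equal to $c$), the matched/unmatched classification of three-cell restrictions via $321$/$231$/$312$ patterns, and the deduction of the direction ``all restrictions balanced $\Rightarrow$ $T$ balanced'' are all correct; that half is essentially the paper's argument in counting form (the paper constructs the injection $\phi_a$ explicitly out of the matched triangles).

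The converse direction, however, contains a genuine gap, and it sits exactly where you located the ``real work.'' Your deletion step is circular. To conclude that the induced labelling on the smaller diagram is still balanced, you must control, for each hook $H_{a,w(p)}$, the labels of the cells it loses: a lone leg cell $(m,w(p))$ must have label $>c$, a lone arm cell $(a,w(m))$ must have label $\le c$, and a matched pair must straddle $c$. Each of these is precisely the assertion that a two- or three-cell restriction of the \emph{same} labelling $T$ on the \emph{same} diagram $D(w)$ is balanced --- an instance of the statement being proved. Minimality of $\ell(w)$ only gives the conclusion for permutations of strictly smaller length, not for other restrictions of $T$ itself, so ``balanced by minimality'' does not apply. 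Moreover, even granting the triangle on $\{a,m,p\}$ balanced, the boundary case $T(a,w(m))=c$ defeats the exact compensation: removing a matched pair with arm label $=c$ and leg label $<c$ decreases $\#\{\mathrm{leg}<c\}$ without decreasing $\#\{\mathrm{arm}>c\}$, so the left inequality can fail after deletion; you flag this case but do not resolve it. (A smaller issue: for small $n$ a deletable strand $m\notin\{i,j,k\}$ carrying at least one inversion need not exist.) The paper avoids deletion entirely: it fixes the allegedly unbalanced restriction, inducts on the row-distance ($j-i$, resp.\ $\min\{k-i,\,r-p\}$), and runs an infinite-descent argument that repeatedly manufactures new, pairwise distinct boxes of label $<a$ in the bottom-arm of the corner, contradicting the finiteness of columns. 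To complete your write-up you would need either to adopt such a direct argument, or to restructure the induction so that the restrictions involving the deleted strand are genuinely covered by the hypothesis, with a separate treatment of the $T(a,w(m))=c$ case.
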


\begin{proof}
($\Longleftarrow$) Given a labelling $T$ of a diagram of an affine permutation $w$, suppose that the labelling is balanced for all subdiagrams $D_{ijk}$ determined by rows $\{i, j, k\}$ and columns $\{w(i), w(j), w(k)\}$. Let $(i, w(j))$ be an arbitrary box in the diagram such that $i < j$ and $ w(i) > w(j)$ and let $a = T(i, w(j))$. By abuse of notation, we will denote by $a$ the box itself. Let us call all the boxes to the right of $a$ in the same row the \emph{right-arm} of $a$, and all the boxes below $a$ in the same column the \emph{bottom-arm} of $a$. To show that the diagram is balanced at $a$, we need to show that there is a injection $\phi_a$ from the set $B^{<}_a$ of all boxes in the bottom-arm of $a$ whose labelling is less than $a$, into the set $R^{\geq}_a$ of all boxes in the right-arm of $a$ whose labelling is greater than or equal to $a$, such that the image of $\phi_a$ contains the set $R^{>}_a$ of all boxes in the right-arm of $a$ whose labelling is \emph{greater} than $a$. Let $(p, w(j))$ be a box in the bottom hook of $a$ such that $T(i, p) < a$. By the balancedness of the $D_{i, p, j}$, $w(i) > w(p) > w(j)$ and $T(i, w(p)) \geq a$. Let $\phi_q$ be the map defined by $(p, w(j)) \mapsto (i, w(p))$. It is easy to see that every box on the right-arm of $a$ whose labelling is greater than $a$ should be an image of $\phi_a$ by a similar argument so $\phi_a$ is the desired injection.

($\Longrightarrow$) Suppose a labelling $T$ of a diagram of an affine permutation $w$ is balanced. Since the diagram is balanced at any point $x$, there is a bijection $\phi_x$ from $B^{<}_x$ to a subset $M$ of the boxes in the right-arm of $x$ such that $R^{>}_x \subset M \subset R^{\geq}_x$. For an element $y$ in $M$, we will write $\phi_x(y)$ instead of $\phi_x^{-1}(y)$ for simplicity.

The nine points in $D_{ijk}$ ($i < j < k$) may contain 0, 1, 2, or 3 boxes (since the maximum number of  inversions of size $3$ permutations is $3$.) Let $p < q < r$ be the rearrangement of $w(i), w(j), w(k)$. The labelling of the boxes of the intersection is clearly balanced when it has $0$ or $1$ boxes, or when it has $2$ boxes and two labellings are the same. Therefore we only need to consider the following three cases.

\begin{enumerate}
\item[Case 1.] Two boxes at $(i, p)$ and $(i, q)$ (i.e. $w(j) < w(k) < w(i)$).

$a = T(i, p), b = T(i, q)$. To show $a \geq b$, we use induction on $j - i$. When $j - i = 1$, the balancedness at $a$ directly implies $a \geq b$.

Suppose $a < b$ for contradiction. Let $c = \phi_a(b)$ be the box in the bottom-arm of $a$ which corresponds to $b$ via $\phi_a$ (thus $c < a$), and let $\ell$ be the row index of the box $c$. Here we have two cases.

\begin{enumerate}[(1)]
\item $p < w(\ell) < q$\\
Let $e$ be the box at the intersection of the right-arm of $a$ and the column $w(\ell)$. Applying induction hypothesis to $D_{i, \ell, k}$, we get $e \geq b (> a)$. Hence, we may apply $\phi_a$ to $e$ and let $c_1 = \phi_a(e)$.

\item $q < w(\ell)$\\
Let $d$ be the box at $(\ell, q)$. By induction hypothesis to $D_{\ell, j, k}$, we get $d \leq c (< a < b)$. Since $d < b$, let $e = \phi_b(d)$. Here, $e \geq b > a$ so let $c_1 = \phi_a(e)$.
\end{enumerate}

In both case we get a box $c_1$ in the bottom-arm of $a$, which is less than $a$ and distinct from $c$. We may repeat the same process with $c_1$ as we did with $c$, and compute another point $c_2$ in the bottom-arm of $a$, which is less than $a$ and distinct from $c$ and $c_1$, and we can continue this process. The construction of $c_i$ ensures that $c_i$ is distinct from any of $c, c_1, \ldots, c_{i - 1}$. This is a contradiction since there are finite number of boxes in the bottom-arm of $a$.

\item[Case 2.] Two boxes at $(i, p)$ and $(j, p)$ (i.e. $w(k) < w(i) < w(j)$).

The symmetric version of the proof of Case 1 will work here if we switch rows with columns and reverse all the inequalities.

\item[Case 3.] Three boxes at $(i, p)$, $(i, q)$, and $(j, p)$ (i.e. $w(k) < w(j) < w(i)$).

We use induction on $\min \{k - i, r - p\}$. Let $T(i, p) = a$, $T(i, q) = b$, and $T(j, p) = c$.

For the base case where $\min \{k - i, r - p\} = 2$, we may assume $r - p = 2$ by symmetry. Note that $q = p + 1$ and $r = q + 1$. If $a$ is not balanced in the $D_{i, j, k}$, then both $b$ and $c$ should be greater than $a$. (If both $b$ and $c$ are smaller than $a$, than the hook at $a$ cannot be balanced.) This implies that there is a box $\phi_a(b) = d$ on the bottom-arm of $a$ such that $d < a$. If $d$ is above $c$, then $a$ and $d$ contradicts the result in Case 2. If $d$ is below $c$, then $c$ is not balanced in the diagram, which contradicts the assumption. This completes the proof of the base case. 

Now, let $a$ be smaller than both $b$ and $c$. As before, there is a box $\phi_a(b) = d_1$ on the bottom-arm of $a$ such that $d_1 < a$. Let the row index of $d_1$ be $\ell$.

\begin{enumerate}[(1)]
\item $\ell<j$ and $w(\ell)<q$.\\
Let $e$ be the label of the box $(i,w(\ell))$. By applying the result of Case 1 to $e$ and $b$, we get $e \geq b (> a)$. Thus there must be another $d_2=\phi_a(e)$ in the bottom-arm of $a$ such that $a>d_2$.

\item $\ell<j$ and $q<w(\ell)<r$.\\
Let $e$ be the label of the box $(i,w(\ell))$, and $f$ be the label of the box $(\ell,q)$. By the induction hypothesis, $D_{\ell, j, k}$ is balanced, so $d_1 \geq f$. This implies $f < b$, and by the induction hypothesis, $D_{i, \ell, j}$ also form a balanced subdiagram. Hence $e \geq b (> a)$. Therefore we have another box $\phi_a(e) = d_2$ such that $a > d_2$.

\item $\ell<j$ and $r<w(\ell)$.\\
This is impossible because $a < d_1$ by Case 2, which contradicts our choice of $d_1$.

\item $\ell>j$ and $w(\ell)<q$.\\
Let $e = T(i, w(\ell))$, and $f = T(j, w(\ell))$. By the induction hypothesis, $f, c, d_1$ form a balanced subdiagram, so $c \leq f$. Similarly, $b, e, f$ form a balanced subdiagram. Since $b$ and $f$ are both greater than $a$, so is $e$. Therefore we have $\phi_a(e) = d_2 < a$ on the bottom-arm of $a$.

\item $\ell>j$ and $w(\ell)>q$.\\
This case is impossible because $c>d$ by Case 2 which is a contradiction.

\end{enumerate}

After we get $d_2$ in the above, we can repeat the argument for $d_2$ instead of $d_1$. The construction of $d_i$ ensures that $d_i$ is distinct from any of $d_1, \ldots, d_{i - 1}$. This is a contradiction since there are finite number of boxes in the bottom-arm of $a$. 

When $a$ is greater than both $b$ and $c$, the transposed version of the above argument works by symmetry. So we are done.
\end{enumerate}
\end{proof}

Now we are ready to prove our proposition.

\begin{proof}[Proof of Proposition~\ref{prop:canonical_balanced}]
A canonical labelling is injective by its construction. By Lemma~\ref{lem:localization}, it is enough to show that for any triple $i < j < k$ the intersection $D_{ijk}$ of the canonical labelling of $D(w)$ with the rows $i, j, k$ and the columns $w(i), w(j), w(k)$ is balanced.

Let $p < q < r$ be the rearrangement of $w(i), w(j), w(k)$. As we have seen in the proof of Lemma~\ref{lem:localization}, $I$ is clearly balanced when $I$ contains $0$ or $1$ boxes, hence we only need to consider the following three cases.

\begin{enumerate}[(1)]
\item $w(j) < w(k) < w(i)$, two horizontal boxes in $D_{ijk}$\\
In this case $w = [\ldots, r, \ldots, p, \ldots, q, \ldots]$ if one write down the affine permutation. When we apply simple reflections in a reduced word of $w$ one-by-one from left to right, to get $w$ from the identity permutation $[\ldots, p, \ldots, q, \ldots, r, \ldots]$, $r$ should pass through $q$ before it passes through $p$ (because the relative order of $p$ and $q$ should stay the same throughout the process). This implies that the canonical labelling of the right box is less than the canonical labelling of the left box, and hence $D_{ijk}$ is balanced.

\item $w(k) < w(i) < w(j)$, two vertical boxes in $D_{ijk}$\\
In this case $w = [\ldots, q, \ldots, r, \ldots, p, \ldots]$. By a similar argument $p$ should pass through $q$ before it passes through $r$ when we apply simple reflections. This implies the canonical labelling of the bottom box is greater than the canonical labelling of the top box.

\item $w(k) < w(j) < w(i)$, three boxes in $D_{ijk}$ in ``$\Gamma$''-shape\\
$w = [\ldots, r, \ldots, q, \ldots, p, \ldots]$ in this case. If $p$ passes through $q$ before $r$ passes through $q$, then $r$ should pass through $p$ before it passes through $q$. This implies that the canonical labelling of the corner box lies between the labellings of other two boxes. If $r$ passes through $q$ before $p$ passes through $q$, then again by a similar argument the corner box lies between the labelling of other two boxes. Hence, $D_{ijk}$ is balanced.
\end{enumerate}

We have showed that$D_{ijk}$ is balanced for every triple $i,j,k$ and thus by Lemma~\ref{lem:localization} the canonical labelling of $D(w)$ is balanced.
\end{proof}

Conversely, suppose we are given an injective labelling of an affine permutation diagram $D(w)$. Is every injective labelling a canonical labelling of a reduced word? To answer this question we first introduce some terminology.

\begin{definition}[Border cell]
Let $w\in\ASigma_n$ and $\cell{(i,j)}$ be a cell of $D(w)$. If $w(i+1)=j$ then the cell $\cell{(i,j)}$ is called a \emph{border cell} of $D(w)$.
\end{definition}

The border cells correspond to the (right) descents of $w$, i.e. the simple reflections that can appear at the end of some reduced decomposition of $w$. When we multiply a descent of $w$ to $w$ from the right, we get an affine permutations whose length is $\ell(w)-1$. It is easy to see that this operation changes the diagram in the following manner.

\begin{lemma}\label{lem:removing_border_cell}
Let $s_i$ be a descent of $w$, and $\alpha = \cell{(i,j)}$ be the corresponding border cell of $D(w)$. Let $D(w)\setminus \alpha$ denote the diagram obtained from $D(w)$ by deleting every boxes $(i+rn,j+rn)$ and exchanging rows $(i+rn)$ and $(i+1+rn)$, for all $r\in\ZZ$. Then the diagram $D(ws_i)$ is $D(w)\setminus \alpha$. \qed
\end{lemma}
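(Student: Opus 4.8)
The plan is to compare the two affine permutations $w$ and $w' := ws_i$ directly through their inversion sets, and then translate that comparison into the pictures $D(w)$ and $D(w')$ using the identification $(a,b)\in D(w) \Leftrightarrow (w(a),b)$ is an inversion of $w$, established in Section~\ref{sec:injective}. First I would record what $w' = ws_i$ does as a window: since $s_i$ acts on positions, $w'$ agrees with $w$ except that $w'(i+rn) = w(i+1+rn)$ and $w'(i+1+rn) = w(i+rn)$ for all $r\in\ZZ$. Because $\cell{(i,j)}$ is a border cell we have $j = w(i+1)$ and $w(i) > w(i+1) = j$, so $s_i$ is a right descent and $\ell(w') = \ell(w)-1$; in particular $D(w')$ has exactly one cell fewer (up to periodicity) than $D(w)$, namely one cell is removed and no cell is created.

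Next I would pin down precisely which inversion disappears and verify that every other inversion is merely relabelled. The pair of positions $(i+rn, i+1+rn)$ is an inversion of $w$ (as $w(i)>w(i+1)$) but not of $w'$ (as $w'(i+rn) = w(i+1+rn) < w(i+rn) = w'(i+1+rn)$); this is the family of pairs corresponding to the cells $(i+rn, j+rn)$ in $D(w)$, so those cells must be deleted. For any other inversion, i.e. any pair of positions $(a,b)$ with $a<b$, $\{a,b\}\not\equiv\{i,i+1\}\bmod n$, and $w(a)>w(b)$: if neither $a$ nor $b$ lies in a class congruent to $i$ or $i+1$, then $w$ and $w'$ agree at both positions and nothing changes. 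If exactly one of them, say $b$, is congruent to $i$ or $i+1$ mod $n$ (the case of $a$ is symmetric, and the case where both $a,b$ are congruent to $i$ or $i+1$ but $\{a,b\}\not\equiv\{i,i+1\}$ cannot occur within a single window for positions differing by less than $n$, and reduces to the previous subcase otherwise), then swapping the two values $w(i+rn)$ and $w(i+1+rn)$ either preserves the inversion with its row/column indices permuted accordingly, or trades it for the companion inversion in the swapped row — and this is exactly the effect of ``exchanging rows $(i+rn)$ and $(i+1+rn)$'' on the diagram after the cells $(i+rn,j+rn)$ are removed. The content of this step is a finite case check on how a single adjacent transposition of values acts on the inversion set; the periodicity condition $(x,y)\in D(w)\Leftrightarrow (x+n,y+n)\in D(w)$ guarantees it suffices to carry it out within one window.

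Finally I would assemble the two computations: the cells of $D(w')$ are obtained from those of $D(w)$ by deleting $\{(i+rn,j+rn) : r\in\ZZ\}$ and then interchanging the contents of rows $i+rn$ and $i+1+rn$ for all $r$, which is exactly the diagram $D(w)\setminus\alpha$ as defined in the statement. I expect the main obstacle to be bookkeeping rather than anything conceptual: one must be careful that after deleting the border cell the two rows $i+rn$ and $i+1+rn$ have ``compatible'' sets of occupied columns so that the row swap is well-defined and lands on $D(w')$ — this is where the border-cell hypothesis $w(i+1)=j$ (as opposed to an arbitrary descent-like condition) is used, since it forces $w(i)>w(i+1)$ and hence controls exactly which columns in those two rows carry cells before and after the swap. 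Alternatively, if a cleaner argument is preferred, one can avoid the case analysis entirely by observing that $D(w)\setminus\alpha$ is an affine diagram whose size is $\ell(w)-1$ and whose associated permutation (read off column by column) must coincide with $ws_i$, but the inversion-set computation above is the most transparent route.
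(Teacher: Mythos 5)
Your argument is correct, and in fact the paper offers no proof at all of Lemma~\ref{lem:removing_border_cell} --- it is stated with a \qed{} as an easy consequence of how right multiplication by a descent acts --- so your inversion-set computation is precisely the verification being left to the reader. The cleanest way to package what you do is to compute the column sets of each row of $D(w')$ directly, using $D(w')=\{(a,w'(b)):a<b,\ w'(a)>w'(b)\}$: for $a\not\equiv i,i+1 \pmod n$ the set $\{w'(b):b>a\}$ equals $\{w(b):b>a\}$, so those rows are untouched; row $i$ of $D(w')$ is $\{w'(b):b>i,\ w'(b)<w'(i)=j\}=\{w(b):b>i+1,\ w(b)<j\}$, which is row $i+1$ of $D(w)$; and row $i+1$ of $D(w')$ is $\{w'(b):b>i+1,\ w'(b)<w(i)\}=\{w(b):b>i,\ w(b)<w(i)\}\setminus\{j\}$, which is row $i$ of $D(w)$ with the border-cell column deleted. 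This absorbs the one slightly shaky spot in your write-up: pairs such as $(i,\,i+1+rn)$ with $r\neq 0$ \emph{can} be inversions (when $w(i)>j+rn$), so they do not quite ``reduce to the previous subcase'' as you assert, but they are handled automatically by the row-by-row computation above and land in the correct swapped row. With that bookkeeping tightened, your proof is complete and is the argument the authors had in mind.
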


\begin{lemma}\label{lem:max_label}
Let $T$ be a column strict balanced labelling of $D(w)$ with largest label $M$, then every row containing an $M$ must contain an $M$ in a border cell. In particular, if $i$ is the index of such row, then $i$ must be a descent of $w$.
\end{lemma}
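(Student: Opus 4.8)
The plan is to focus on the rightmost cell of label $M$ in the given row and to extract from balancedness two facts — that this cell terminates its column, and that its column lies weakly to the right of the column of the border cell — and then to use the (automatic) emptiness of the border cell's bottom-arm to force the border cell to carry $M$. Concretely, fix a column-strict balanced labelling $T$ of $D(w)$ with largest label $M$, assume row $i$ meets a cell of label $M$, and let $\cell{(i,b_0)}$ be the rightmost such cell in row $i$; recall $(a,b)\in D(w)$ iff $w^{-1}(b)>a$ and $b<w(a)$. I would first show that no cell of $D(w)$ lies below $\cell{(i,b_0)}$ in column $b_0$: any such cell has label $<M$ (by column-strictness and maximality of $M$), so the bottom-arm of $H_{i,b_0}$ is nonempty, whereas every right-arm cell of $\cell{(i,b_0)}$ lies strictly to its right in row $i$ and hence also has label $<M$ by the choice of $b_0$; thus $M$ appears in $H_{i,b_0}$ only at the corner, and rearranging the hook so that its labels weakly increase from right to left and from top to bottom would push $M$ into the (nonempty) bottom-arm, moving the corner label and contradicting balancedness of $H_{i,b_0}$.

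Next I would translate ``$(i+1,b_0)\notin D(w)$'' using the description of $D(w)$: since $w^{-1}(b_0)>i$, we get either $w^{-1}(b_0)=i+1$ (so $b_0=w(i+1)$) or $w^{-1}(b_0)>i+1$ (so $b_0\ge w(i+1)$), and in both cases $w(i+1)\le b_0<w(i)$. Hence $w(i)>w(i+1)$, so $i$ is a descent of $w$ and $\alpha:=\cell{(i,w(i+1))}$ is the border cell of $D(w)$ in row $i$, sitting at a column $\le b_0$. If $b_0=w(i+1)$, then $\alpha=\cell{(i,b_0)}$ and $T(\alpha)=M$; otherwise $w(i+1)<b_0$, and the key point is that column $w(i+1)$ has no cell of $D(w)$ in a row $>i$ (a cell $(p,w(i+1))$ would need $p<w^{-1}(w(i+1))=i+1$), so the bottom-arm of $H_\alpha$ is empty and balancedness of $H_\alpha$ reduces to the statement that $T(\alpha)$ is the largest label occurring in row $i$ weakly to the right of $\alpha$. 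Since $\cell{(i,b_0)}$ is one of those cells and carries $M$, we conclude $T(\alpha)=M$. This, together with $i$ being a descent, is exactly the lemma.

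The main obstacle is resisting the plausible-but-false guess that the rightmost $M$ in a row is itself a border cell: a column of $D(w)$ can terminate at a non-border cell, so that guess fails. The argument survives because of the asymmetry between the two hooks invoked — at $\cell{(i,b_0)}$ the presence of any cell below it would dislodge $M$ from the corner, which simultaneously makes $i$ a descent and forces $\cell{(i,b_0)}$ to sit weakly right of the border cell; while at the border cell the bottom-arm is automatically empty (column $w(i+1)$ stops at row $i$), so balancedness degenerates to a domination statement and pushes the corner label up to $M$. I expect the first step, and the careful handling of the hook-rearrangement condition, to be where the most care is needed.
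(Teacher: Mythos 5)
Your proof is correct and follows essentially the same route as the paper's: you analyze the hook at the rightmost $M$ in the row to show its column terminates there (whence $i$ is a descent), and then use the horizontal hook at the border cell to force its label up to $M$. The only difference is organizational — by extracting $w(i+1)\le b_0<w(i)$ from $(i+1,b_0)\notin D(w)$ you get the descent and the position of the rightmost $M$ simultaneously, which lets you skip the paper's separate subcase where the rightmost $M$ would lie to the left of the border cell.
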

\begin{proof}
Suppose that the row $i$ contains an $M$. First we show that $i$ is a descent of $w$. If $i$ is not a descent, i.e. $w(i) < w(i+1)$, then let $(i,j)$ be the rightmost box in row $i$ whose labelling is $M$. Since $w(i) < w(i+1)$, there is a box at $(i+1, j)$. By column-strictness no box below $(i,j)$ has label $M$ and no box to the right of $(i,j)$ has label $M$ by the assumption. Hence the diagram is not balanced at $(i,j)$, which is a contradiction. Therefore $i$ must be a descent of $w$.

Let $w(i+1) = j$, i.e. $(i,j)$ is a border cell. We must show that $T(i,j) = M$. If $T(i,j) < M$, then the rightmost occurrence of $M$ cannot be to the right of $(i,j)$ because the hook $H_{ij}$ is horizontal. On the other hand, if the rightmost occurrence of $M$ is to the left of $(i,j)$, then there must be a box below that rightmost $M$ and the hook at that $M$ is not balanced by the argument in the previous paragraph. Hence, $T(i,j) = M$.
\end{proof}

\begin{theorem}\label{thm:removing_border_balanced}
Let $T$ be a column strict labelling of $D(w)$, and assume some border cell $\alpha$ contains the largest label $M$ in $T$. Let $T\setminus \alpha$ be the result of deleting all the boxes of $\alpha$ and switching pairs of rows $(i + rn, i + 1 + rn)$ for all $r\in \ZZ$ from $T$. Then $T$ is balanced if and only if $T \setminus \alpha$ is balanced.
\end{theorem}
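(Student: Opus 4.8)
The plan is to reduce the statement to the Localization Lemma (Lemma~\ref{lem:localization}) together with a combinatorial matching of $3$-row restrictions. Write $\alpha=\cell{(i,j)}$ with $w(i+1)=j$, and set $w'=ws_i$; by Lemma~\ref{lem:removing_border_cell} we have $D(w')=D(w)\setminus\alpha$, so $T\setminus\alpha$ really is a labelling of $D(w')$, and it is column strict because exchanging the rows $i+rn$ and $i+1+rn$ preserves the multiset of labels in every column while deleting the cells $(i+rn,j+rn)$ only removes labels. One further preliminary observation is that whether the restriction of a column-strict labelling to three rows $a<b<c$ and the columns $w(a),w(b),w(c)$ is balanced depends only on the relative order of the three (distinct) values and on the labels that occur: this is exactly the content of the case division in the proof of Lemma~\ref{lem:localization}, where ``Case 1/2/3'' are read off from the order of $w(i),w(j),w(k)$ alone. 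In other words, balancedness of such a small configuration is a function of its ``shape'' and its labels.

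By the Localization Lemma, $T$ is balanced iff every $3$-row restriction of $(D(w),T)$ is balanced, and similarly for $(D(w'),T\setminus\alpha)$, so it suffices to produce a bijection between triples of rows for $w'$ and triples of rows for $w$ under which balanced restrictions correspond. Call $\{i+rn,i+1+rn\}$ a \emph{swapped pair}; since distinct swapped pairs are disjoint, a triple contains at most one. If a triple $\rho$ contains none, replace each row of $\rho$ that is $\equiv i\pmod n$ by the next row index and each row that is $\equiv i+1\pmod n$ by the previous one; as $\rho$ has no swapped pair this yields a triple $\rho^{\dagger}$ of distinct row indices in the same order, and a direct check shows that the restriction of $(D(w'),T\setminus\alpha)$ to $\rho$ and the restriction of $(D(w),T)$ to $\rho^{\dagger}$ have the same shape and the same labels — the only label that could differ between them would sit in a deleted cell $(i+rn,j+rn)$, whose column $j+rn=w(i+1+rn)$ does not occur in $\rho^{\dagger}$. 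Hence the two restrictions are simultaneously balanced or not; the map $\rho\mapsto\rho^{\dagger}$ is an involution on such triples, so this disposes of all triples except those containing exactly one swapped pair, which by periodicity we may take to be $\{i,i+1\}$.

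The heart of the matter is therefore the three-row version of the theorem: for a triple $\rho=\{i,i+1,c\}$, the restriction of $(D(w),T)$ to $\rho$ is balanced iff the restriction of $(D(w'),T\setminus\alpha)$ to $\rho$ is. Here the second configuration is obtained from the first by interchanging rows $i$ and $i+1$ (values and labels both) and deleting the cell at $(i,w(i+1))$, which carries the largest label $M$. I would verify the equivalence by running through the few possible shapes — mirroring the $w(j)<w(k)<w(i)$, $w(k)<w(i)<w(j)$, and ``$\Gamma$''-shape cases in the proof of Proposition~\ref{prop:canonical_balanced}, with $c$ lying either below or above the pair $\{i,i+1\}$ — and checking in each case that the small balancedness inequality for $(D(w),T)$ coincides with the one for $(D(w'),T\setminus\alpha)$. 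The indispensable ingredient is that $M$ is the \emph{maximum} label: this forces the relevant medians to equal maxima (so that, for instance, a $\Gamma$-hook with corner $x$, arm entry $M$ and leg entry $y$ is balanced exactly when $x\ge y$, which is precisely the balancedness of the two-box column $\{x,y\}$ that remains once the $M$-cell is removed), and it is also what makes the $M$-cell's bottom-arm empty within the configuration, so that deleting it does not disturb any other hook. I expect this small but fiddly case analysis — keeping straight the position of $c$ and of the removed $M$-cell in each shape — to be the only real obstacle; granted it, the bijection of the previous paragraph finishes the proof.
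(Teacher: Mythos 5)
Your proposal follows essentially the same route as the paper: reduce via the Localization Lemma, match $3$-row restrictions of $T$ and $T\setminus\alpha$ under the row swap $s_i$, observe that they literally coincide unless the deleted border cell lies in the restriction, and in that case use the maximality of $M$ to collapse the one-, two-, and three-box ($\Gamma$) configurations to the balancedness condition of what remains after deletion. The key inequality you isolate ($x\ge y$ for the $\Gamma$-hook with arm or leg entry $M$) is exactly the paper's final step, so the remaining ``fiddly case analysis'' you defer is precisely what the paper writes out.
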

\begin{proof}
Let $\alpha = \cell{(i,j)}$ be the border cell, and $w' = w s_i$ so that $T \setminus \alpha$ is a labelling of $D(w')$. By Lemma~\ref{lem:localization}, it suffices to show that for all $a < b < c$ the restriction $T_{abc}$ of $T$ to the subdiagram of $D(w)$ determined by rows $a,b,c$ and columns $w(a), w(b), w(c)$ is balanced if and only if the restriction $(T \setminus \alpha)_{s_i a, s_i b, s_i c}$ is balanced.

Note that for every $(r, s)$ the $(r, w(s))$-entry of $T$ coincides with the $(s_i r, w(s))$-entry of $T\setminus \alpha$ unless $(r, w_s) = (i + rn, j + rn)$ for some $r\in \ZZ$. Hence $T_{abc}$ will be the same as $(T\setminus \alpha)_{s_i a, s_i b, s_i c}$ unless $i + rn \in \{a, b, c\}$ and $j + rn \in \{(w(a), w(b), w(c)\}$ for some $r\in \ZZ$. Therefore we may assume we are in this case, so $(T\setminus \alpha)_{s_i a, s_i b, s_i c}$ has one fewer box than $T_{abc}$. Furthermore, if $T_{abc}$ has at most two boxes (and $(T\setminus \alpha)_{s_i a, s_i b, s_i c}$ has at most one box), then the verification is trivial since $M$ is the largest label and $(i,j)$ is a border cell.

Thus we may assume that $T_{abc}$ has three boxes and $(T\setminus \alpha)_{s_i a, s_i b, s_i c}$ has two boxes, so $w(c) < w(b) < w(a)$ and either $(a, b) = (i + rn, i + 1 + rn)$ or $(b, c) = (i + rn, i + 1 + rn)$ for some $r \in \ZZ$. In the first case $T_{abc}$ being balanced and $(T\setminus \alpha)_{abc}$ being balanced are both equivalent to the condition $T(a, w(c)) \geq T(b, w(c))$, and in the second case they are both equivalent to the condition $T(a, w(c)) \geq T(a, w(b))$.
\end{proof}

Combining Proposition~\ref{prop:canonical_balanced}, Lemma~\ref{lem:max_label} and Theorem~\ref{thm:removing_border_balanced}, we obtain the main theorem of this section.

\begin{theorem}~\label{thm:canonical_labelling}
Let $\R(w)$ denote the set of reduced words of $w\in\ASigma_n$, and $\B(D)$ denote the set of injective balanced labellings of the affine diagram $D$. The correspondence $a\mapsto T_a$ is a bijection between $\R(w)$ and $\B(D(w))$. \qed
\end{theorem}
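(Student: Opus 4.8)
The plan is to show that $a \mapsto T_a$ is a well-defined injection and a surjection onto $\B(D(w))$, using the structural results already established. First I would verify well-definedness and injectivity together: Proposition~\ref{prop:canonical_balanced} already tells us that $T_a$ is an injective balanced labelling, so the map lands in $\B(D(w))$. For injectivity, I would argue that a reduced word $a = a_1 \cdots a_\ell$ can be recovered from $T_a$: the cell labelled $k$ records which inversion is created at the $k$-th step, and knowing the inversion created at each step (together with the running partial permutation, which is determined inductively) pins down $a_k$. More precisely, if $(i, w(j))$ is the cell with $T_a(i, w(j)) = \ell$, then by the construction $s_{a_\ell}$ transposes the two values sitting in adjacent positions in the product $s_{a_1}\cdots s_{a_\ell} = w$, so $a_\ell$ is the position (mod $n$) of that adjacent pair in $w$; this is exactly the border-cell data, and then one proceeds by downward induction on the largest label.

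Next I would prove surjectivity by induction on the size $\ell$ of $D(w)$ (equivalently on $\ell(w)$), which is where the three preceding results combine. The base case $\ell = 0$ is trivial: $w = \id$ and the empty labelling corresponds to the empty word. For the inductive step, let $T \in \B(D(w))$ with largest label $M = \ell$ (injectivity forces the label set to be exactly $\{1, \ldots, \ell\}$, so $M = \ell$). By Lemma~\ref{lem:max_label}, the label $M$ occurs in some border cell $\alpha = \cell{(i,j)}$ with $w(i+1) = j$, so $s_i$ is a descent of $w$; set $w' = w s_i$, which has length $\ell - 1$ and diagram $D(w') = D(w) \setminus \alpha$ by Lemma~\ref{lem:removing_border_cell}. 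By Theorem~\ref{thm:removing_border_balanced}, $T \setminus \alpha$ is a column-strict balanced labelling of $D(w')$, and it is still injective with labels $\{1, \ldots, \ell - 1\}$ (we deleted the unique cell carrying $M$, and row-swaps preserve the label multiset). By the inductive hypothesis there is a reduced word $b = b_1 \cdots b_{\ell-1}$ of $w'$ with $T_b = T \setminus \alpha$. I would then check that $a := b_1 \cdots b_{\ell-1}\, i$ is a reduced word of $w = w' s_i$ (it is, since $\ell(w' s_i) = \ell(w') + 1$) and that $T_a = T$: appending $s_i$ at the end creates exactly the inversion recorded by the cell $\alpha$ and does not disturb the labels of the other cells, because passing from $T_b$ on $D(w')$ back to $D(w)$ is precisely the inverse of the deletion operation in Theorem~\ref{thm:removing_border_balanced}.

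The main obstacle I anticipate is the bookkeeping in the inductive step of surjectivity: one must check carefully that the canonical labelling "commutes" with the border-cell removal operation, i.e. that $T_{b_1\cdots b_{\ell-1} i} \setminus \alpha = T_{b_1 \cdots b_{\ell-1}}$ as labellings of $D(w')$, where $\alpha$ is the border cell of $D(w)$ created by $s_i$. This requires tracking how the adjacent-transposition interpretation of the word interacts with the row-swap in $D(w) \setminus \alpha$, and in particular confirming that the label carried by each surviving cell is unchanged under the identification of $D(w')$-cells with $D(w)$-cells. Once that compatibility is in hand, the induction closes and well-definedness plus injectivity from the first paragraph finish the proof. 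A minor point worth stating explicitly is that the choice of border cell containing $M$ might not be unique, but the argument does not depend on the choice — different choices simply correspond to the (commuting) descents that can end a reduced word of $w$, and the resulting bijection is independent of these choices because the inverse map $T \mapsto a$ is canonically defined.
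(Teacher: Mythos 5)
Your argument is correct and is exactly the combination the paper intends: the paper's proof of this theorem is the one-line remark that it follows by combining Proposition~\ref{prop:canonical_balanced}, Lemma~\ref{lem:max_label} and Theorem~\ref{thm:removing_border_balanced} (together with Lemma~\ref{lem:removing_border_cell}), and your induction on $\ell(w)$ via removal of the border cell carrying the maximal label, plus recoverability of $a$ from $T_a$, is precisely how those ingredients assemble. No gaps; you have simply written out the details the paper leaves implicit.
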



An algorithm to decode the reduced word from a balanced labelling will be given in Section~\ref{sec:encoding_decoding}. Another immediate corollary of Theorem~\ref{thm:removing_border_balanced} is a recurrence relation on the number of injective balanced labellings.

\begin{corollary}\label{cor:recurrence_injective}
Let $b_{D(w)}$ denote the number of injective balanced labellings of $D(w)$. Then,
$$b_{D(w)} = \sum_{\alpha} b_{D(w)\setminus \alpha},$$
where the sum is over all border cells $\alpha$ of $D(w)$. \qed
\end{corollary}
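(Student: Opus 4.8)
The plan is to deduce the recurrence directly from Theorem~\ref{thm:removing_border_balanced} and Lemma~\ref{lem:max_label}, by partitioning the set $\B(D(w))$ of injective balanced labellings according to which border cell carries the top label. Fix $w\in\ASigma_n$ of length $\ell$, so that every injective balanced labelling of $D(w)$ uses each of the labels $1,\dots,\ell$ exactly once in $[D(w)]$; in particular its largest label is precisely $\ell$. For a border cell $\alpha$, let $\B_\alpha(D(w))$ be the set of $T\in\B(D(w))$ with $T(\alpha)=\ell$. By Lemma~\ref{lem:max_label}, for any $T\in\B(D(w))$ every row containing $\ell$ contains $\ell$ in a border cell; since $T$ is injective, $\ell$ occurs in exactly one cell of $[D(w)]$, hence $T$ lies in $\B_\alpha(D(w))$ for exactly one border cell $\alpha$. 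This gives the disjoint decomposition
\[
\B(D(w))=\bigsqcup_{\alpha}\B_\alpha(D(w)),
\]
the union being over all border cells $\alpha$ of $D(w)$ (equivalently, over the right descents of $w$).

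Next I would show that for each border cell $\alpha=\cell{(i,j)}$ the operation $T\mapsto T\setminus\alpha$ of Theorem~\ref{thm:removing_border_balanced} restricts to a bijection $\B_\alpha(D(w))\xrightarrow{\ \sim\ }\B(D(w)\setminus\alpha)=\B(D(ws_i))$. For $T\in\B_\alpha(D(w))$, the labelling $T\setminus\alpha$ of $D(ws_i)$ uses exactly the labels $1,\dots,\ell-1$, each once (we removed the unique occurrence of $\ell$), so it is injective; it is balanced by Theorem~\ref{thm:removing_border_balanced}, which applies since $T$ is column strict and $\alpha$ carries its largest label $M=\ell$. The inverse map takes an injective balanced labelling $U$ of $D(ws_i)$, undoes the row swaps of Lemma~\ref{lem:removing_border_cell} (i.e.\ exchanges rows $i+rn$ and $i+1+rn$ for all $r$), and reinserts the cell $\alpha$ with label $\ell$; the resulting labelling $T$ of $D(w)$ is injective (the labels $1,\dots,\ell-1$ are still all distinct and $\ell$ is fresh), hence column strict, so Theorem~\ref{thm:removing_border_balanced} again gives that $T$ is balanced, and clearly $T\setminus\alpha=U$ and $T\in\B_\alpha(D(w))$. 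Thus $|\B_\alpha(D(w))|=b_{D(w)\setminus\alpha}$.

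Combining the two displays, $b_{D(w)}=|\B(D(w))|=\sum_{\alpha}|\B_\alpha(D(w))|=\sum_{\alpha}b_{D(w)\setminus\alpha}$, which is the claimed recurrence.

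The only genuinely delicate point is the bijection $\B_\alpha(D(w))\cong\B(D(ws_i))$: one must make sure that the two constructions are mutually inverse at the level of labellings (the row swaps are involutions and the box deletion/insertion are inverse to each other on the underlying diagrams by Lemma~\ref{lem:removing_border_cell}), and that the hypotheses of Theorem~\ref{thm:removing_border_balanced} — column strictness of the labelling of $D(w)$ and the border cell carrying the maximum — hold in both directions; all of this follows from injectivity and the normalization that a size-$\ell$ injective balanced labelling has largest label exactly $\ell$. Everything else is bookkeeping. (One could instead route the argument through Theorem~\ref{thm:canonical_labelling} and the standard fact $\R(w)=\bigsqcup_{s_i\in\mathrm{Des}(w)}\R(ws_i)\,i$, but the direct argument above is self-contained within this section.)
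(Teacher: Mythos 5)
Your proof is correct and matches the paper's intended argument: the paper leaves the corollary as an immediate consequence of Theorem~\ref{thm:removing_border_balanced} (together with Lemma~\ref{lem:max_label}), and your partition of $\B(D(w))$ by the border cell carrying the top label $\ell$, followed by the bijection $T\mapsto T\setminus\alpha$ onto $\B(D(ws_i))$, is exactly the bookkeeping being elided. The one point worth keeping explicit is the observation that injectivity implies column strictness, which is what licenses applying Lemma~\ref{lem:max_label} and Theorem~\ref{thm:removing_border_balanced} in both directions.
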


\subsection{Column Strict Tableaux and Affine Stanley Symmetric Functions}~\label{sec:column_strict}

In this section we consider column strict balanced labellings of affine permutation diagrams. We show that they give us the affine Stanley symmetric function in the same way the semi-standard Young tableaux give us the Schur function.

Affine Stanley symmetric functions are symmetric functions parametrized by affine permutations. They are defined in \cite{Lam2006} as an affine counterpart of the Stanley symmetric function \cite{Stanley1984}. Like Stanley symmetric functions, they play an important role in combinatorics of reduced words. The affine Stanley symmetric functions also have natural geometric interpretation \cite{Lam2008}, namely they are pullbacks of the cohomology Schubert classes of the affine flag variety $LSU(n)/T$ to the affine Grassmannian $\Omega SU(n)$ under the natural map $\Omega SU(n) \rightarrow LSU(n)/T$. There are various ways to define the affine Stanley symmetric function, including the geometric one above. For our purpose, we use one of the two combinatorial definitions in \cite{Lam2010}.

A word $a_1a_2\cdots a_\ell$ with letters in $\ZZ/n\ZZ$ is called \emph{cyclically decreasing} if (1) each letter appears at most once, and (2) whenever $i$ and $i+1$ both appears in the word, $i+1$ precedes $i$. An affine permutation $w\in\ASigma_n$ is called cyclically decreasing if it has a cyclically decreasing reduced word. We call $w=v_1v_2\cdots v_r$ \emph{cyclically decreasing factorization} of $w$ if each $v_i\in\ASigma_n$ is cyclically decreasing, and $\ell(w)=\sum_{i=1}^{r} \ell(v_i)$. We call $(\ell(v_1), \ell(v_2), \ldots, \ell(v_r))$ the \emph{type} of the cyclically decreasing factorization.

\begin{definition}[\cite{Lam2010}]
Let $w\in\ASigma_n$ be an affine permutation. The affine Stanley symmetric function $\AF_w(x)$ corresponding to $w$ is defined by
$$\AF_w(x):=\AF_w(x_1,x_2,\cdots)=\sum_{w=v_1v_2\cdots v_r}x_1^{\ell(v_1)}x_2^{\ell(v_2)}\cdots x_r^{\ell(v_r)},$$
where the sum is over all cyclically decreasing factorization of $w$.
\end{definition}

Given an affine diagram $D$, let $\CB(D)$ denote the set of column strict balanced labellings of $D$. Now we can state the main theorem of this section.

\begin{theorem}\label{thm:affine_stanley}
Let $w\in\ASigma_n$ be an affine permutation. Then
$$\AF_w(x)=\sum_{T\in \CB(D(w))}x^T$$
where $x^T$ denotes the monomial $\prod_{(i,j)\in [D(w)]}x_{T(i,j)}$
\end{theorem}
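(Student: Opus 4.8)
The plan is to set up a weight-preserving bijection between column strict balanced labellings of $D(w)$ and cyclically decreasing factorizations of $w$, following the template that Fomin--Greene--Reiner--Shimozono used in the finite case but adapting it to the affine setting via the tools already developed in this section. The key structural input is the interplay between Theorem~\ref{thm:removing_border_balanced} (peeling off the largest label sitting in a border cell) and Lemma~\ref{lem:max_label} (the largest label $M$ always occupies a border cell in every row where it appears). Together these say that if we collect all the cells carrying the top label $M$, the rows containing them are exactly a set of descents of $w$, and deleting those cells (with the accompanying row swaps) yields a column strict balanced labelling of a shorter affine permutation $w'$ with $w = w' v$ where $\ell(w) = \ell(w') + \ell(v)$.

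The first step is to show that for a column strict balanced labelling $T$, the set of cells labelled $M$, read off via the associated descents, corresponds to a \emph{cyclically decreasing} affine permutation $v$, and moreover that the factor $v$ determined in this way has $\ell(v)$ equal to the number of cells in $[D(w)]$ carrying the label $M$. Here I would argue that the relevant descents, say indexed by a set $S \subseteq \ZZ/n\ZZ$, must be such that the product $\prod_{i \in S} s_i$ taken in cyclically decreasing order is reduced; the obstruction to this being a single cyclically decreasing factor is a configuration where both $i$ and $i+1$ are descents realized by $M$-cells but in the ``wrong'' cyclic order — and I expect column strictness together with balancedness in the three-row localized subdiagram (Lemma~\ref{lem:localization}) to rule this out, because two $M$'s in adjacent columns/rows in a $\Gamma$-configuration would violate balancedness or column strictness. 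This is the step I expect to be the main obstacle: making precise exactly which border cells can simultaneously carry $M$, and checking that the admissible configurations are precisely those encoding a cyclically decreasing permutation.

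Granting that, the second step is the induction: iterating the peeling operation of Theorem~\ref{thm:removing_border_balanced} on the labels $M, M-1, \ldots, 1$ (or rather on the distinct labels in decreasing order) produces a cyclically decreasing factorization $w = v_1 v_2 \cdots v_r$ whose type records, in slot $k$, the number of cells of $[D(w)]$ labelled by the $k$-th largest value — which is exactly the exponent of $x_k$ in $x^T$ if we reindex so that label $i$ contributes $x_i$. Actually one must be slightly careful about reindexing: the labels used need not be $1, 2, \ldots$ contiguously, but gaps only introduce factors $x_j^0$, so the monomial $x^T = \prod_{(i,j) \in [D(w)]} x_{T(i,j)}$ is unaffected and matches $\prod_k x_k^{\ell(v_k)}$ after discarding empty factors. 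Conversely, given a cyclically decreasing factorization, the same correspondence run backwards — using the canonical labelling (Proposition~\ref{prop:canonical_balanced}) on each cyclically decreasing factor and stacking via the inverse of the border-cell operation — builds a column strict balanced labelling with the prescribed content; one checks column strictness survives because within each cyclically decreasing factor no two cells lie in the same column (each $s_i$ used once).

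Finally I would assemble these: the peeling map $T \mapsto (v_1, \ldots, v_r)$ and the stacking map are mutually inverse by construction, both preserve the monomial weight, so summing $x^T$ over $\CB(D(w))$ equals summing $\prod_k x_k^{\ell(v_k)}$ over cyclically decreasing factorizations of $w$, which is the definition of $\AF_w(x)$. The two places demanding genuine care are the cyclic-decreasingness claim in step one and the verification that the inverse construction lands in $\CB$ rather than merely in the set of balanced labellings — both reduce, via Lemma~\ref{lem:localization}, to a finite check on three-row subdiagrams, which keeps the argument self-contained within the machinery already established.
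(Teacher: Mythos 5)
Your proposal follows essentially the same route as the paper's proof: peel off the cells carrying the largest label via Lemma~\ref{lem:max_label} and Theorem~\ref{thm:removing_border_balanced}, group the resulting reduced word by label to obtain a cyclically decreasing factorization, and invert by applying the canonical labelling of Theorem~\ref{thm:canonical_labelling} to a concatenated reduced word and collapsing labels within each factor. The two points you flag as delicate --- cyclic decreasingness of each factor and column-strictness of the inverse image --- are exactly the ones the paper checks, by the same column-strictness arguments you sketch.
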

\begin{proof}
Given a column strict balanced labelling $T$, we call the sequence $([$the number of $1$'s in $T]$, $[$the number of $2$'s in $T]$, $\ldots )$ the \emph{type} of the labelling. It is enough to show that there is a type-preserving bijection $\phi$ from a column strict labelling of $D(w)$ to a cyclically decreasing factorization of $w$.

Let us construct $\phi$ as follows. Given a column strict labelling $T$ with $t$ cells, there is a (not necessarily unique) border cell $c_1$ which contains the largest label of $T$ by Lemma~\ref{lem:max_label}. Let $r(c_1)$ be the row index of $c_1$ in the fundamental window. By Theorem~\ref{thm:removing_border_balanced}, we obtain a column strict balanced labelling $T\setminus c_1$ by removing the cell $c_1$ and switching all pairs of rows $(r(c_1) + kn, r(c_1) + kn + 1)$ for all $k \in \ZZ$. The diagram of this labelling corresponds to the affine permutation $w s_{r(c_1)}$ with length $t - 1$. In $T\setminus c_1$ we again pick a border cell $c_2$ containing the largest label of $T \setminus c_1$ and remove the cell to get a labelling $T \setminus c_1 \setminus c_2$ of $w s_{r(c_1)} s_{r(c_2)}$. We continue this process removing cells $c_1, c_2, \ldots, c_t$ until we get the empty digram which corresponds to the identity permutation. Then, $w = s_{r(c_t)} s_{r(c_{t-1})} \cdots, s_{r(c_1)}$ is a reduced decomposition of $w$. Now in this reduced decomposition, group the terms together in the parenthesis if they correspond to removing the same largest label of the digram in the process and this will give you a factorization $\phi(T)$ of $w$. We will show that this is indeed a cyclically decreasing factorization and that this map is well-defined.

We first show that the words inside each parenthesis is cyclically decreasing. If the index $i = r(c_x)$ and $i + 1 = r(c_y)$ is in the same parenthesis in $\phi(T)$, then they corresponds to removing the border cells of the same largest labelling $M$ in the above process. We want to show that $i + 1$ precedes $i$ inside the parenthesis. If $i$ precede $i + 1$ in the parenthesis, then it implies we unwind the descent at $i + 1$ before we unwind the descent at $i$ during the process. Then at the time when we removed the border cell $c_y$ at $i+1$-st row with label $M$, the cell right above $c_y$ was $c_x$ with label $M$. This contradicts the column-strictness of the diagram so $i + 1$ should always precede $i$ if they are inside the same parenthesis.

Now we show that $\phi$ is well-defined. It is enough to show that if we had two border cells $c_x$ and $c_y$ with the same largest labelling at some point (so we had a choice of taking one before another) then $\lvert r(c_x) - r(c_y) \rvert \geq 2$ so the corresponding simple reflections commute inside a parenthesis in $\phi(T)$. Suppose $\lvert r(c_x) - r(c_y) \rvert = 1$ and assume $r(c_x) = i$ and $r(c_y) = i + 1$. If we let $b$ be the box right above $c_y$ in the $i$-th row, the label of $b$ must be equal to $M$ by the balancedness at $b$. This is impossible because the labelling is column-strict.

To show that $\phi$ is a bijection, we construct the inverse map $\psi$ from a cyclically decreasing factorization to a column-strict balanced labelling. Given a cyclically decreasing factorization $w = v_1 v_2 \cdots v_q$ take any cyclically decreasing reduced decomposition of $v_i$ for each $i$ and multiply them to get a reduced decomposition of $w$, e.g. $w =  (s_a s_b s_c) (s_d) (\id) (s_e s_f) \cdots$. By Theorem~\ref{thm:canonical_labelling}, this reduced decomposition corresponds to a unique injective labelling of $D(w)$. Now change the labels in the injective labelling so that the labels correponding to simple reflections in the $k$-th parenthesis will have the same label $k$, for example if $w =  (s_a s_b s_c) (s_d) (\id) (s_e s_f) \cdots$ then change the labels $\{1, 2, 3\}$ to $\{1, 1, 1\}$, $\{4\}$ to $\{2\}$, $\{5, 6\}$ to $\{4\}$ and so on. The resulting labelling is defined to be the image of the given cyclically decreasing factorization under $\psi$. It is easy to see that this labelling is also balanced so we need to show that this is labelling is column-strict and that the map is well-defined, because cyclically decreasing decomposition of an affine permutation is not unique.

Given any labelling $M$, suppose we are at the point at which we have removed all the boxes with labels greater than $M$ during the above procedure, and suppose that there are two boxes $c_x, c_y$ of the same label $M$ in the same column $j$, where $c_x$ is below $c_y$. These two boxes must be removed before we remove any other boxes with labels less than $M$, so to make $c_y$ a border cell, every boxes between $c_x$ and $c_y$ (including $c_x$) should be removed before $c_y$ gets removed. This implies that every box between $c_x$ and $c_y$ has label $M$. Let $i$ be the row index of $c_x$. Then the box $(i-1, j)$ should also have the label $M$ and it gets removed after the box $c_x$ is removed. This implies that the index $i - 1$ preceded $i$ inside a parenthesis in the original reduced decomposition, which contradict the fact that each parenthesis came from a cyclically decreasing decomposition. Thus the image of $\psi$ is column-strict.

Finally, we show that the map $\psi$ is well-defined. One easy fact from affine symmetric group theory is that any two cyclically decreasing decomposition of a given affine permutation can be obtained from each other via applying commuting relations only. Thus it is enough to show that the column-strict labellings coming from two reduced decompositions $(\cdots)\cdots(\cdots s_i s_j \cdots) \cdots (\cdots)$ and $(\cdots)\cdots(\cdots s_j s_i \cdots) \cdots (\cdots)$ coincides if $\lvert i - j \rvert \geq 2$. This is straightforward because the operation of switching the pairs of rows $(i + rk, i + 1 + rk)$, $k \in \ZZ$ is disjoint from the operation of switching the pairs of rows $(j + rk, j + 1 + rk)$, $k \in \ZZ$.

From Theorem~\ref{thm:canonical_labelling} and from the construction of $\phi$ and $\psi$, one can easily see that $\phi$ and $\psi$ are inverses of each other. This gives the desired bijection.
\end{proof}

\bigskip

\subsection{Encoding and Decoding of Reduced Decompositions}~\label{sec:encoding_decoding}

In this section we present a direct combinatorial formula for decoding reduced words from injective balanced labellings of affine permutation diagrams. Again, the theorem in \cite{Fomin1997} extends to the affine case naturally.

\begin{definition}
Let $T$ be an injective balanced labelling of $D(w)$, where $w \in \ASigma_n$ has length $\ell$. For each $k = 1, 2, \ldots, \ell$, let $\alpha_k$ be the box in $[D(w)]$ labelled by $k$, and let
\begin{eqnarray*}
&I(k) &:=\text{ the row index of }\alpha_k,\\
&R^{+}(k) &:=\text{ the number of entries }k'>k\text{ in the same row of }\alpha_k,\\
&U^{+}(k) &:=\text{ the number of entries }k'>k\text{ above }\alpha_k\text{ in the same column.}
\end{eqnarray*}
\end{definition}

\begin{theorem}\label{thm:encoding_decoding}
Let $T$ be an injective balanced labelling of $D(w)$, where $w \in \ASigma_n$ has length $\ell$, and let $a = a_1a_2\cdots a_\ell$ be the reduced word of $w$ whose canonical labelling is $T$. Then, for each $k = 1, 2, \ldots, \ell$,
$$a_k = I(k) + R^{+}(k) - U^{+}(k).$$
\end{theorem}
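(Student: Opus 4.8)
The plan is to prove the formula by induction on $\ell$, peeling off the largest label $M = \ell$ using Theorem~\ref{thm:removing_border_balanced} (or rather its injective specialization, which is exactly the mechanism behind Theorem~\ref{thm:canonical_labelling}). First I would recall the setup: by Theorem~\ref{thm:canonical_labelling} the labelling $T$ equals $T_a$ for a unique reduced word $a = a_1 \cdots a_\ell$, and the peeling process that decodes $a$ removes the box $\alpha_\ell$ labelled $\ell$, which by Lemma~\ref{lem:max_label} sits in a border cell $\cell{(i,j)}$ with $w(i+1) = j$ and $i = r(\alpha_\ell)$ a descent, yielding $w' = w s_i$ with canonical labelling $T' = T \setminus \alpha_\ell$ and reduced word $a' = a_1 \cdots a_{\ell-1}$. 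The induction hypothesis applies to $T'$ on $D(w')$, giving $a_k = I'(k) + R'^{+}(k) - U'^{+}(k)$ for $k \le \ell - 1$, where the primed statistics are computed in $T'$. So the proof splits into two parts: (i) verify the formula for $k = \ell$ directly, and (ii) check that passing from $T$ to $T'$ does not change the quantity $I(k) + R^{+}(k) - U^{+}(k)$ for $k \le \ell - 1$.

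For part (i): the box $\alpha_\ell$ is a border cell in row $i$, so $I(\ell) = i$. Since $\ell$ is the maximum label, there are no entries larger than $\ell$, hence $R^{+}(\ell) = U^{+}(\ell) = 0$, and the claimed identity reads $a_\ell = i$. This is exactly the content of the peeling step: removing $\alpha_\ell$ multiplies $w$ on the right by $s_i$, so $a_\ell = i = I(\ell)$. This part is essentially immediate.

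Part (ii) is the main point, and I expect it to be the only real work. The operation $T \mapsto T' = T \setminus \alpha_\ell$ does two things simultaneously: it deletes all boxes $(i + rn, j + rn)$, and it swaps rows $i + rn \leftrightarrow i+1+rn$ for every $r$. Fix $k \le \ell-1$ and let $\alpha_k = (p, q) \in [D(w)]$. I would argue case by case according to the position of $p$ relative to $\{i, i+1\} \pmod n$ and of $q$ relative to $j$. The row statistic $I(k)$ is affected only by the row swap (it changes by at most $\pm 1$, and only when $p \equiv i$ or $i+1$); the column statistic $U^{+}(k)$ counts larger entries above $\alpha_k$ in column $q$, which can change both because the deleted box $(i,j)$ carried the label $\ell > k$ and because the row swap permutes which boxes lie "above"; the row statistic $R^{+}(k)$ counts larger entries in row $p$ to the right of $\alpha_k$, which changes when the row swap moves a box carrying a label $> k$ into or out of row $p$. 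The key structural facts making the bookkeeping go through are: $\cell{(i,j)}$ is a border cell, so row $i$ of $D(w)$ contains only the single box $(i,j)$ to the left of nothing — more precisely $H_{i,j}$ is purely horizontal, meaning column $j$ has no box of $D(w)$ strictly below row $i$, equivalently $(i+1,j) \notin D(w)$; and since $s_i$ is a descent, $w(i) > w(i+1) = j$, which controls exactly which of the rows $i, i+1$ gets the boxes in neighboring columns after the swap. I would tabulate the finitely many cases (box in a row $\not\equiv i, i+1$; box in row $\equiv i$ with column $>j$, $<j$; box in row $\equiv i+1$ with column $> j$, $< j$, and the interaction with the deleted column $j$) and check in each that the net change in $I + R^{+} - U^{+}$ is zero. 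This is precisely the affine analogue of the corresponding verification in \cite{Fomin1997}, and the periodicity $T(i,j) = T(i+n, j+n)$ guarantees that working in the fundamental window loses no information.

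The main obstacle, then, is the careful case analysis in part (ii): making sure that a gain of $+1$ in $I(k)$ from the row swap is always compensated by a matching change in $R^{+}(k)$ or $U^{+}(k)$, and that the disappearance of the label $\ell$ from column $j$ and row $i$ is correctly accounted. Once the invariance of $I + R^{+} - U^{+}$ under a single peeling step is established, the theorem follows by induction, with the base case $\ell = 0$ (or $\ell = 1$) being trivial.
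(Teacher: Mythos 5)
Your proposal is correct and follows essentially the same route as the paper: induction on $\ell$, peeling the maximal label $\ell$ from a border cell in row $a_\ell$ via Theorem~\ref{thm:removing_border_balanced}, verifying $a_\ell = I(\ell)$ directly, and then checking that $I(k)+R^{+}(k)-U^{+}(k)$ is unchanged for $k<\ell$ by cases on whether $\alpha_k$ lies in rows $a_\ell$, $a_\ell+1$, or elsewhere. The case tabulation you defer is exactly the paper's cases (2)--(4), where the compensating $\pm 1$ in $U^{+}$ or $R^{+}$ is supplied by Lemma~\ref{lem:localization} applied to the vertically adjacent box in the swapped rows.
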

\begin{proof}
Our claim is that
$$I(k) = a_k + U^+(k) - R^+(k).$$
We will show that this formula is valid for all $k$ by induction on $\ell$. The formula is obvious if $\ell = 0$ or $1$.

Let $\hat{w} = w s_{a_\ell}$ so that $\hat{w}$ has length $\ell - 1$. The above formula holds for $ \hat{a} = a_1a_2 \cdots a_{\ell-1}$, i.e.
$$\hat{I}(k) = a_k + \hat{U}^+(k) - \hat{R}^+(k) \mod n,$$
where the hatted expressions correspond to the word $\hat{a}$. We now analyze the change in the quantities on the left-hand and right-hand side of our claim.

\begin{enumerate}[(1)]
\item
If $k = \ell$, then $U^+(k) = R^+(k) = 0$ and obviously $I(\ell) = a_\ell$.
\item
If $k < \ell$ and $k$ does not occur in rows $a_\ell$ or $a_\ell + 1$ of $D(\hat(w))$, then none of the quantities change.
\item
If $k < \ell$ and $k$ occur in row $a_\ell$, then $I(k) = \hat{I}(k) + 1$ and $R^+(k) = \hat{R}^+(k)$. Note that the entry $k'$ right below $k$ in $D(\hat{w})$ is greater than $k$ by Lemma~\ref{lem:localization} and it will move up when we do the exchange $s_{a_\ell}$. Thus $U^{+}(k) = \hat{U}^+(k) + 1$, and the changes on the two sides of the equation match.
\item
If $k < \ell$ and $k$ occur in row $a_\ell + 1$, then $I(k) = \hat{I}(k) - 1$ and $R^+(k) = \hat{R}^+(k) + 1$. Note that the entry $k'$ right above $k$ in $D(\hat{w})$ is less than $k$ by Lemma~\ref{lem:localization} so it did not get counted in $\hat{U}^{+}(k)$. Thus $U^{+}(k) = \hat{U}^+(k)$, and the changes on the two sides of the equation match.

\end{enumerate}
\end{proof}

\begin{remark}~\label{rem:reverse_word}
For a reduced word $a = a_1a_2 \cdots a_\ell$ of $w$ and the corresponding canonical labelling $T_a$, let $a^{-1}$ be the reversed reduced word $a_\ell a_{\ell-1} \cdots a_1$ of $w^{-1}$. It is not hard to see that the canonical labelling $T_{a^{-1}}$ corresponding to $a^{-1}$ can be obtained by taking the reflection of $T_a$ with respect to the diagonal $y = x$ and then reversing the order of the labels by $i\mapsto \ell + 1 - i$. This implies that
$$a_k = J(k) + C^-(k) - L^-(k)$$
where
\begin{eqnarray*}
&J(k) &:=\text{ the row index of }\alpha_k,\\
&C^{-}(k) &:=\text{ the number of entries }k'<k\text{ in the same column of }\alpha_k,\\
&L^{-}(k) &:=\text{ the number of entries }k'<k\text{ to the left }\alpha_k\text{ in the same row.}
\end{eqnarray*}
With careful examination one can show that the equation $I(k) + R^{+}(k) - U^{+}(k) = J(k) + C^-(k) - L^-(k)$ is equivalent to the balanced condition.
\end{remark}

\bigskip


\section{Set-Valued Balanced Labellings}
Whereas Schubert polynomials are representatives for the cohomology of the flag variety, Grothendieck polynomials are representatives for the K-theory of the flag variety. In the same
way that Stanley symmetric functions are stable Schubert polynomials, one can define stable Grothendieck polynomials as a stable limit of Grothendieck polynomials. Furthermore, Lam \cite{Lam2006} generalized this notion to the \emph{affine stable Grothendieck polynomials} and showed that they are symmetric functions. In this section we define a notion of \emph{set-valued (s-v) balanced labellings} of an affine permutation diagram and show that affine stable Grothendieck polynomials are the generating functions of column-strict s-v balanced labellings. Note that every result in this section can be applied to the usual stable Grothendieck polynomials if we restrict ourselves to the diagram of finite permutations. This can be seen as a generalization of \emph{set-valued tableaux} of Buch \cite{Buch2002} which he defined to give a formula for stable Grothendieck polynomials indexed by $321$-avoiding permutations (in other words, skew diagrams $\lambda / \mu$ where $\lambda$ and $\mu$ are partitions.)

\subsection{Set-Valued Labellings}

Let $w$ be an affine permutation and let $D(w)$ be its diagram. A \emph{set-valued (s-v) labelling} of $D(w)$ is a map $T:D(w) \rightarrow 2^{\ZZ_{>0}}$ from the boxes of $D(w)$ to subsets of positive integers such that $T(i,j) = T(i+n, j+n)$. The \emph{length} $\lvert T \rvert$ of a labelling $T$ is the sum of the cardinalities $\sum \lvert T(b) \rvert$ over all boxes $b\in [D(w)]$ in the fundamental window.

A s-v labelling $T$ is called \emph{injective} if
$$\bigcup_{b\in [D(w)]} T(b) = \{1, 2, \ldots, \lvert T \rvert\}$$
(hence the union is necessarily a disjoint union.) $T$ is called \emph{column-strict} if for any two distinct boxes $a$ and $b$ in the same column of $D(w)$, $T(a)\cap T(b) = \emptyset$.

\begin{definition}
For a box $a\in D(w)$ let $H_a$ be the hook at $a$ as before. Let $\{b_i\}_{i\in I}$ be the boxes in the right-arm of $a$ and let $\{c_j\}_{j\in J}$ be the boxes in the bottom-arm of $a$. Let $\rmin_a := \min \{\bigcup_i T(b_i)\}$ and $\bmin_a := \min \{\bigcup_j T(c_j)\}$ where $\min \emptyset := \infty$. In each box in $H_a$, we are allowed to pick one label from the box under the following conditions:
\begin{enumerate}[(1)]
\item
in box $a$, we may pick any element in $T(a)$,
\item
in box $b_i$, we may pick $\min T(b_i)$ or any element $x\in T(b_i)$ such that $x\leq \bmin_a$,
\item
in box $c_j$, we may pick $\min T(c_j)$ or any element $y\in T(c_j)$ such that $y\leq \rmin_a$.
\end{enumerate}

An s-v hook $H_a$ is called \emph{balanced} if the hook is balanced (in the sense of Definition~\ref{def:bal_hook}) for every choice of a label in each box under the above conditions.
\end{definition}

\begin{definition}
Let $w = [w_1, w_2, w_3]$ be a permutation in $\Sigma_3$. A s-v labelling $T$ of $D(w)$ is called \emph{balanced} if every hook in $D(w)$ is balanced.
\end{definition}

\begin{definition}[S-V Balanced Labellings]~\label{def:sv_balanced}
Let $w$ be any affine permutation and let $T$ be a s-v labelling of $D(w)$. $T$ is called \emph{balanced} if the $3\times 3$ subdiagram $D_{i,j,k}$ determined by rows $i,j,k$ and columns $w(i), w(j), w(k)$ is balanced for every $i<j<k$. (cf. Lemma~\ref{lem:localization}.)
\end{definition}

Note that when $w$ is a $321$-avoiding finite permutation, Definition~\ref{def:sv_balanced} is equivalent to the \emph{set-valued tableaux} of Buch \cite{Buch2002}.

\begin{lemma}~\label{lem:sv_global}
If $T$ is a s-v balanced labelling, then every hook of $T$ is balanced.
\end{lemma}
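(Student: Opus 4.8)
The plan is to carry out the set-valued analogue of the $(\Longleftarrow)$ direction of the Localization Lemma (Lemma~\ref{lem:localization}). Unwinding Definition~\ref{def:sv_balanced} together with the definitions of a balanced s-v hook and of a balanced hook, the statement amounts to the following: for every box $a=(i,w(j))\in D(w)$ (using periodicity to pick a representative with $i<j$, so $w(i)>w(j)$) and every admissible choice of one label in each box of the hook $H_a$, the resulting ordinary labelling of $H_a$ is balanced in the sense of Definition~\ref{def:bal_hook}. Write $t$ for this choice, $\alpha:=t(a)$, and let $R$, $B$ be the right-arm and bottom-arm of $a$ in $D(w)$. Reading Definition~\ref{def:bal_hook} through order statistics — the corner occupies the $(|R|+1)$-st slot of the sorted hook — balancedness at $a$ is equivalent to the two cardinality inequalities
$$\#\{x\in R:t(x)>\alpha\}\ \le\ \#\{x\in B:t(x)\le\alpha\}\qquad\text{and}\qquad\#\{x\in B:t(x)<\alpha\}\ \le\ \#\{x\in R:t(x)\ge\alpha\}.$$
So it suffices to exhibit two injections realizing these inequalities.

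For the second inequality, I would take $c=(p,w(j))\in B$ with $t(c)<\alpha$, so $i<p<j$ and $w(p)>w(j)$, and examine the $3\times 3$ subdiagram $D_{i,p,j}$. If $w(p)>w(i)$ this subdiagram consists of the two vertical boxes $a$ and $c$; since the restriction of $t$ is still an admissible choice there (see below), s-v balancedness of $D_{i,p,j}$ would force $\alpha=t(a)\le t(c)$, contradicting $t(c)<\alpha$. Hence $w(p)<w(i)$, and $D_{i,p,j}$ is the $\Gamma$-triple with corner $a$, right box $b:=(i,w(p))$, and bottom box $c$; balancedness there forces $\alpha$ to be the median of $\{t(a),t(b),t(c)\}$, so $t(b)\ge\alpha$. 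The assignment $c=(p,w(j))\mapsto b=(i,w(p))$ is injective (as $w$ is a bijection), giving the second inequality. The first inequality is proved symmetrically: for $b=(i,w(q))\in R$ with $t(b)>\alpha$ one has $w(j)<w(q)<w(i)$ and $q>i$, and $q<j$ must hold (otherwise $D_{i,j,q}$ is the two horizontal boxes $a,b$, whose balancedness forces $t(b)\le\alpha$); then $c':=(q,w(j))\in B$, the $\Gamma$-triple $D_{i,q,j}$ forces $t(c')\le\alpha$, and $b\mapsto c'$ is injective.

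The step requiring the most care is confirming that restricting an admissible choice for $H_a$ yields an admissible choice for the relevant hook inside each $3\times 3$ subdiagram — this is exactly what licenses invoking the hypothesis that the subdiagram is s-v balanced. This holds because the right-arm and bottom-arm of $a$ inside a subdiagram are contained in $R$ and $B$, so the values $\rmin_a$ and $\bmin_a$ computed inside the subdiagram are $\ge$ those computed in $D(w)$; consequently conditions (2) and (3) of admissibility only weaken upon restriction. The remaining work — keeping track of which of the at most three boxes actually occur in each subdiagram under each ordering of $\{w(i),w(p),w(j)\}$ (resp.\ $\{w(i),w(q),w(j)\}$), and checking that the two-box configurations genuinely cannot occur for a balanced labelling when they would break an inequality — is routine three-letter-pattern bookkeeping, parallel to Cases 1--3 in the proof of Lemma~\ref{lem:localization}, and uses no column-strictness.
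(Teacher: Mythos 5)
Your proof is correct and follows essentially the same route as the paper, whose entire proof is the one-line remark that ``the first half of the proof of Lemma~\ref{lem:localization} will work here if one replaces single-valued labels with set-valued labels''; you have simply carried out that first half explicitly in the set-valued setting. The one point you rightly single out as needing care --- that a globally admissible choice of labels restricts to a subdiagram-admissible choice because $\rmin_a$ and $\bmin_a$ can only increase upon restriction --- is exactly the detail the paper leaves implicit, and your verification of it is correct.
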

\begin{proof}
The first half of the proof of Lemma~\ref{lem:localization} will work here if one replaces single-valued labels with set-valued labels.
\end{proof}

\begin{figure}
\centering
\includegraphics{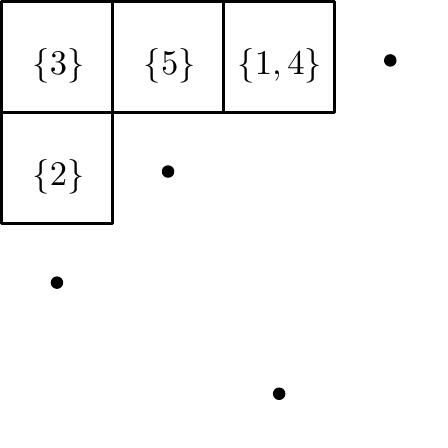}
\caption{non-balanced s-v diagram}
\label{fig:sv_diag}
\end{figure}

\begin{remark}
If every label set consists of a single element, then Definition~\ref{def:sv_balanced} is equivalent to the original definition of (single-valued) balanced labellings by Lemma~\ref{lem:localization}. One may wonder why we must take this \emph{local} definition of checking all the $3\times 3$ subdiagrams rather than simply requiring that every hook in the diagram is balanced \emph{globally} as we did for single-valued diagrams. Lemma~\ref{lem:sv_global} shows that that the global definition is weaker than the local definition in the set-valued case and, in fact, it is strictly weaker. Figure~\ref{fig:sv_diag} is an example of a diagram in which every hook is balanced globally but it is not balanced in our definition if we take the subdiagram determined by rows $1,3,4$. We will show in the following sections that this local definition is the ``right'' definition for s-v balanced labellings.
\end{remark}

\subsection{NilHecke Words and Canonical S-V Labellings}

Whereas a balanced labelling is an encoding of a reduced word of an affine permutations, a s-v balanced labelling is an encoding of a \emph{nilHecke word}. Let us recall the definition of the affine nilHecke algebra. An affine nilHecke algebra $\AUU_n$ is generated over $\ZZ$ by the generators $u_0, u_1, \ldots, u_{n-1}$ and relations
\begin{eqnarray*}
&u_i^2=u_i &\text{for all } i \\
&u_i u_{i+1} u_i = u_{i+1} u_i u_{i+1} &\text{for all } i \\
&u_i u_j = u_j u_i &\text{for }|i-j|\geq 2
\end{eqnarray*}
where indices are taken modulo $n$. A sequence of indices $a_1, a_2, \ldots a_k \in [0, n-1]$ is called a \emph{nilHecke word} and it defines an element $u_{a_1} u_{a_2} \cdots u_{a_k}$ in $\AUU_n$. $\AUU_n$ is a free $\ZZ$-module with basis $\{u_w \mid w\in \ASigma_n\}$ where $u_w = u_{i_1} u_{i_2} \cdots u_{i_\ell}$ for any reduced word $(i_1, i_2, \ldots, i_\ell)$ of $w$. The multiplication under this basis is given by
$$u_i u_w = \begin{cases}
  u_{s_i w}& \text{if $i$ is not a descent of $w$,}\\
  u_{w}& \text{if $i$ is a descent of $w$.}
\end{cases}$$
Note that for any nilHecke word $a_1,a_2, \ldots, a_k$ in $\AUU_n$, there is a unique affine permutation $w\in \ASigma_n$ such that $u_w = u_{a_1} u_{a_2} \cdots u_{a_k}$. In this case we denote $S(a_1, a_2, \ldots, a_k) = w$.

\begin{definition}[Canonical s-v labelling] Let $w\in \ASigma_ n$ be an affine permutation and let $a = (a_1, a_2, \ldots, a_k)$ be a nilHecke word in $\AUU_n$ such that $S(a) = w$. Let $w' = S(a')$ where $a' = (a_1, a_2, \ldots, a_{k-1})$. Define a s-v injective labelling $T_a:D(w)\rightarrow 2^{\{1,\cdots,k\}}$ recursively as follows.
\begin{enumerate}[(1)]
\item If $a_k$ is a decent of $w'$, then $D(w) = D(w')$. Add a label $k$ to the sets $T_{a'}(a_k + rn, w'(a_k + 1) + rn))$, $r\in \ZZ$.
\item If $a_k$ is not a decent of $w'$, then $D(w)$ is obtained from $D(w')$ by switching the pairs of rows $(a_k + rn, a_k + 1 + rn)$, $r\in \ZZ$ and adding a cell $\cell{(a_k, w(a_k+1))}$. Label the newly appeared boxes $(a_k + rn, w(a_k + 1) + rn)$, $r\in \ZZ$, by a single element set $\{k\}$.
\end{enumerate}
We call $T_a$ the \emph{canonical s-v labelling} of $a$.
\end{definition}

The following results are set-valued generalizations of Proposition~\ref{prop:canonical_balanced}, Lemma~\ref{lem:max_label}, and Theorem~\ref{thm:removing_border_balanced}.

\begin{proposition}~\label{prop:sv_canonical_balanced}
Let $w\in \ASigma_n$. A canonical labelling of a nilHecke word $a$ in $\AUU_n$ with $S(a) = w$ is a s-v injective balanced labelling of $D(w)$.
\end{proposition}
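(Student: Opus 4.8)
The plan is to mimic the proof of Proposition~\ref{prop:canonical_balanced}, proceeding by induction on the length $k$ of the nilHecke word $a = (a_1, \ldots, a_k)$, using the recursive construction of the canonical s-v labelling $T_a$ from $T_{a'}$ where $a' = (a_1, \ldots, a_{k-1})$. By Definition~\ref{def:sv_balanced}, it suffices to check that every $3\times 3$ subdiagram $D_{i,j,k}$ determined by rows $i<j<k$ and columns $w(i), w(j), w(k)$ is balanced in the s-v sense (i.e., balanced for every admissible choice of labels in the hook). By the induction hypothesis, $T_{a'}$ is s-v balanced, so all $3\times 3$ subdiagrams of $D(w')$ are s-v balanced; I want to deduce the same for $D(w)$. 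Just as in Lemma~\ref{lem:removing_border_cell} and Theorem~\ref{thm:removing_border_balanced}, passing from $D(w')$ to $D(w)$ either (case 1) leaves the diagram unchanged and merely adds the new label $k$ to one box along a border hook, or (case 2) switches rows $(a_k + rn, a_k+1+rn)$ and introduces one new box labelled $\{k\}$. In case 2, as in Theorem~\ref{thm:removing_border_balanced}, a $3\times 3$ subdiagram of $D(w)$ coincides with the corresponding subdiagram of $D(w')$ (with rows relabelled by $s_{a_k}$) unless it actually involves the switched rows and the new column, in which case one checks a small finite list of configurations directly.

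The genuinely new feature over the single-valued case is case 1: when $a_k$ is a descent of $w'$, the cell is not created but the label $k$ is \emph{added} to an existing box $b = (a_k + rn, w'(a_k+1)+rn)$, which is necessarily a border cell (and $k$ is the largest label, so $k = \max T(b)$). I would first observe that since $k$ is maximal, adding it to $T(b)$ can only affect the balancedness of a hook $H_x$ when $b$ lies in the right-arm or bottom-arm of $x$, or $x = b$. Because $b$ is a border cell, its hook $H_b$ is purely horizontal, so $H_b$ stays balanced trivially. For a hook $H_x$ with $b$ in its bottom-arm: the admissibility conditions say that in $b$ we may pick $\min T(b)$ or any element $\le \rmin_x$; since $k$ is the largest label overall, picking $k$ in $b$ is allowed only if $k \le \rmin_x$, i.e. only if every label in the right-arm of $x$ is $\ge k$, forcing them all to equal $k$ — and then one checks balancedness still holds (the new largest label in the bottom-arm is matched by an equal label in the right-arm). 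The case $b$ in the right-arm of $x$ is symmetric with $\bmin_x$. The cleanest way to organize all of this is again via the $3\times 3$ localization: I would only need to verify that each $D_{i,j,k}$ meeting the modified box remains s-v balanced, which is a bounded check using the admissibility rules of the s-v hook.

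For case 2, the argument parallels the proof of Proposition~\ref{prop:canonical_balanced} together with Theorem~\ref{thm:removing_border_balanced}: the new box sits at $(a_k, w(a_k+1))$, and a triple $\{i,j,k\}$ of rows gives a subdiagram differing from the one for $D(w')$ only when $a_k + rn \in \{i,j,k\}$ and the new column is among $\{w(i),w(j),w(k)\}$. In that situation the subdiagram of $D(w)$ has exactly one more box than that of $D(w')$, which already has $0,1$, or $2$ boxes; the verification is then the same finite case analysis as in the single-valued proof of Proposition~\ref{prop:canonical_balanced} (the three ``$\Gamma$'' / two-horizontal / two-vertical configurations), except that now the already-present boxes may carry sets of labels and one must run the check for every admissible selection. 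Since the new box carries the singleton $\{k\}$ with $k$ maximal, in every such configuration the new label is the largest among the chosen labels in its hook, and the admissibility/balancedness conditions are satisfied exactly as before.

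I expect the main obstacle to be case 1 — the addition of a repeated-use (set-valued) label to a border cell — because this phenomenon has no counterpart in the single-valued setting and forces one to reason carefully with the $\rmin_a$/$\bmin_a$ admissibility conditions rather than just with the plain balanced-hook definition. Once one correctly observes that a maximal label $k$ added to a border cell can only ``participate'' in a hook $H_x$ when the relevant arm of $x$ is forced to be constantly $k$, the remaining bookkeeping reduces to the same bounded $3\times 3$ checks used throughout the paper, and s-v balancedness of $D(w)$ follows from that of $D(w')$ by the inductive hypothesis together with Definition~\ref{def:sv_balanced}.
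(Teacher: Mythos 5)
Your argument is correct in outline, but it takes a genuinely different route from the paper's. The paper proves Proposition~\ref{prop:sv_canonical_balanced} directly, with no induction on the length of the word: for each triple $i<j<k$ it examines the $3\times 3$ subdiagram and tracks the order in which the values $w(i),w(j),w(k)$ pass one another as the letters of the nilHecke word are applied (exactly the mechanism used for Proposition~\ref{prop:canonical_balanced}), reading off the needed inequalities among the label \emph{sets}; the only new wrinkle over the single-valued case is the ``$\Gamma$''-configuration, where one argues that every label of $(i,q)$ exceeds every label of $(i,p)$, and that each label of $(j,p)$ lying below $\min T(i,q)$ lies below every label of $(i,p)$. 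You instead induct on the word length and show that each elementary step of the recursive construction of $T_a$ (appending the maximal label to an existing border cell when $a_k$ is a descent, or creating a new singleton border cell otherwise) preserves s-v balancedness of all $3\times 3$ subdiagrams; this is essentially the content of Theorem~\ref{thm:sv_removing_border_balanced} run in reverse, restricted to injective labellings, so you are front-loading work the paper does separately (and then uses for the bijection of Theorem~\ref{thm:sv_canonical_labelling}). Both approaches are sound; the paper's is shorter for this particular statement, while yours makes the bijection afterwards almost immediate. One simplification you could exploit in your ``case 1'': since the canonical labelling is injective, no other box of a hook containing the border cell $b$ can carry the label $k$, so the condition $k\leq \rmin_x$ (or $k\leq \bmin_x$) is simply never satisfied; hence adding $k$ to $T(b)$ changes neither $\rmin$, nor $\bmin$, nor the set of admissible selections of any hook other than the purely horizontal hook at $b$ itself, and the sub-case you flag as the ``main obstacle'' (an arm forced to be constantly $k$) is vacuous here.
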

\begin{proof}
We show that for any triple $i < j < k$ the intersection $D_{ijk}$ of the canonical labelling of $a$ with the rows $i, j, k$ and the columns $w(i), w(j), w(k)$ is balanced. Let $p < q < r$ be the rearrangement of $w(i), w(j), w(k)$.

If $w(j) < w(k) < w(i)$ or $w(k) < w(i) < w(j)$ so that there are two boxes in $D_{ijk}$, then the same arguments we used in the proof of Proposition~\ref{prop:canonical_balanced} will work. If $w(k) < w(j) < w(i)$ so that there are three boxes in $D_{ijk}$ in ``$\Gamma$''-shape, then $w = [\ldots, r, \ldots, q, \ldots, p, \ldots]$ in this case. If $p$ passes through $q$ before $r$ passes through $q$, then $r$ should pass through $p$ before it passes through $q$. This implies that every label in the box $(j,p)$ which is less than the minimal label of $(i,q)$ is less than any label of $(i,p)$. Also, every label of $(i,q)$ is larger than any label of $(i,p)$. Hence, $D_{ijk}$ is balanced. A similar argument will work for the case where $r$ passes through $q$ before $p$ passes through $q$.
\end{proof}

\begin{lemma}\label{lem:sv_max_label}
Let $T$ be a s-v column-strict balanced labelling of $D(w)$ with largest label $M$, then every row containing an $M$ must contain an $M$ in a border cell. In particular, if $i$ is the index of such row, then $i$ must be a descent of $w$. Futhermore, if a border cell containing $M$ contains two or more labels, then it must be the only cell in row $i$ which contains an $M$.
\end{lemma}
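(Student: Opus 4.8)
The plan is to adapt the proof of the single-valued Lemma~\ref{lem:max_label} and then handle the new ``furthermore'' clause separately. First I would establish, exactly as in the single-valued case, that a row containing $M$ forces $i$ to be a descent: if $w(i)<w(i+1)$, pick the rightmost box $(i,j)$ in row $i$ whose label set contains $M$; since there is a box at $(i+1,j)$, column-strictness forbids $M$ below $(i,j)$, and the choice of $j$ together with the ``largest label'' property forbids $M$ strictly to the right, so choosing $M$ in box $a=(i,j)$ and $\min$ in each right-arm box produces a hook that is not balanced at $a$. Note one must be slightly careful: the s-v balanced condition only directly controls $3\times3$ subdiagrams, but by Lemma~\ref{lem:sv_global} every hook of $T$ is balanced, which is exactly what this argument uses. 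Then I would show the border cell $(i,j)$ with $w(i+1)=j$ has $M\in T(i,j)$: its hook is horizontal, so if $M\notin T(i,j)$ the rightmost occurrence of $M$ in row $i$ is either to the right of $(i,j)$ (impossible, nothing is there) or to the left, in which case there is a box below that occurrence and the hook there fails to be balanced, again by the first paragraph's argument. This reproduces Lemma~\ref{lem:max_label} verbatim in the s-v setting, using only Lemma~\ref{lem:sv_global} and column-strictness.

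For the new statement, suppose the border cell $b=(i,j)$ contains $M$ together with at least one other label, and suppose toward a contradiction that some other box $b'$ in row $i$ also contains $M$. By the first part, $b'$ lies strictly to the left of $b$ (the border cell is the rightmost box of its row since $w(i+1)=j$ means columns $w(i+1)+1,\dots$ are empty in row $i$... more precisely, $b$ is the leftmost-hook box whose arm is purely horizontal, and any box of row $i$ to its right would contradict $b$ being a border cell). Now consider the hook $H_{b'}$ at $b'$: it contains $b$ in its right-arm, and since $b$ is a border cell there is a box directly below $b'$ in column $w(b')$... here I must instead look at the bottom-arm of $b'$. The key point is that $\rmin_{b'} \le M$ because $b$ is in the right-arm of $b'$ and $M\in T(b)$; but $T(b)$ also contains a label $x<M$, and by the selection rule~(2) for a box in the right-arm we may pick either $\min T(b)$ or any $x\le \bmin_{b'}$. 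To force a contradiction, I would pick $M\in T(b')$ at the corner, pick $\min T(c)$ in each bottom-arm box $c$ of $b'$, and in the right-arm box $b$ pick $\min T(b)$; if $\min T(b)<M$ then the rearrangement of $H_{b'}$ moves that small label past the corner and the corner cannot remain $M$, contradicting balancedness — unless every box strictly between $b'$ and $b$ in row $i$, together with everything below $b'$, is arranged so that $M$ stays put, which a counting argument rules out since the corner label $M$ is the global maximum and a strictly smaller entry $\min T(b)$ appears weakly to its right in the hook.

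The main obstacle will be getting this last hook argument exactly right: the s-v balanced definition quantifies over all admissible choices of single labels, so I need to exhibit one specific admissible choice of labels in $H_{b'}$ that violates ordinary balancedness, and I must verify that choice is admissible under conditions~(1)--(3). The delicate point is that condition~(2) only lets me pick a non-minimal element of a right-arm box if it is $\le \bmin_{b'}$, so I should instead exploit that $b$ itself, being a border cell with $M$ present, has $\min T(b)\le M$ with equality only if $T(b)=\{M\}$ — but we assumed $T(b)$ has another element, so $\min T(b)<M$ strictly, and $\min T(b)$ is always an admissible choice in a right-arm box by condition~(2). Then in the hook $H_{b'}$, choosing $M$ at the corner $b'$, $\min T(b)<M$ at $b$, and arbitrary admissible labels elsewhere, the corner of the rearranged hook receives a label $\le\min T(b)<M$, contradicting balancedness of $H_{b'}$ (which holds by Lemma~\ref{lem:sv_global}). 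Hence no second $M$ can occur in row $i$, completing the proof.
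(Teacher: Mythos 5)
Your first two parts are essentially sound and close to the paper's argument (the paper runs them through the defining $3\times 3$ subdiagrams $D_{i,i+1,w^{-1}(j)}$ rather than through full hooks via Lemma~\ref{lem:sv_global}, but both routes work there). However, two things go wrong. First, a border cell is \emph{not} the rightmost box of its row: for $w=[3,1,2]$ the border cell is $(1,1)$ while $(1,2)\in D(w)$ lies to its right. So your dismissal of the case ``another $M$ to the right of $b$'' as ``nothing is there'' is based on a false premise (the same slip appears parenthetically in your part 2, though there the correct reason --- the hook at the border cell is horizontal, so its corner must dominate every admissible right-arm choice --- is stated right next to it). That case must actually be argued: choose $\min T(b)<M$ at the corner $b$ and $M$ at the offending right-arm box (admissible since $\mathrm{bmin}_b=\infty$), and the horizontal hook fails to balance.

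Second, and more seriously, the final hook computation in part 3 is incorrect. After rearrangement the corner of $H_{b'}$ receives the $(r+1)$-st smallest of the $r+b+1$ chosen labels (where $r,b$ are the arm lengths), not something ``$\le\min T(b)$.'' If the right-arm of $b'$ is long and contains several boxes labelled exactly $\{M\}$, then with your choice (corner $M$, $\min T(b)<M$ at $b$, minima elsewhere) the number of chosen labels below $M$ can still be at most $r$, and the corner legitimately remains $M$: the full hook at $b'$ is balanced for \emph{every} admissible choice in such a configuration. This is precisely the phenomenon the paper warns about --- global hook balancedness (Lemma~\ref{lem:sv_global}) is strictly weaker than Definition~\ref{def:sv_balanced} --- so no argument resting only on the hook at $b'$ can close this case. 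The paper instead applies the local definition to the subdiagram $D_{i,i+1,w^{-1}(k)}$ (rows $i,i+1,w^{-1}(k)$, columns $w(i),\ w(i+1)=j,\ k$), which consists of exactly the three boxes $(i,k)$, $(i,j)$, $(i+1,k)$ in $\Gamma$-shape; there $r=b=1$, the bottom box cannot carry $M$ by column-strictness, $\min T(i,j)<M$ is always admissible on the right, and the corner is forced down to the median, giving the contradiction. You need to replace your full-hook argument with this $3\times 3$ restriction.
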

\begin{proof}
Suppose that the row $i$ contains a label $M$. First we show that $i$ is a descent of $w$. If $i$ is not a descent, i.e. $w(i) < w(i+1)$, then let $(i,j)$ be the rightmost box in row $i$ whose label set contains $M$. By the balancedness of the subdiagram $D_{i, i+1, w^{-1}(j)}$, labels of the box $(i+1, j)$ must be greater than $M$, which is a contradiction. Therefore, $i$ is a decent.

Let $w(i+1) = j$, i.e. $(i,j)$ is a border cell. We must show that $M\in T(i,j)$. If every label of $T(i,j)$ is less than $M$, then a label $M$ cannot occur in the right-arm of $(i,j)$ by the balancedness. Let $(i, k)$, $k < j$, be the rightmost occurrence of $M$ in the $i$-th row. Then the subdiagram $D_{i,i+1, w^{-1}(k)}$ is not balanced.

For the last sentence of the lemma, let $(i,j)$ be a border cell such that $M\in T(i,j)$ and $\lvert T(i,j) \rvert \geq 2$. One can follow the argument in the previous paragraphs to show that there cannot be an occurence of $M$ to the right of $(i,j)$ and to the left of $(i,j)$ in row $i$.
\end{proof}

\begin{definition}
Given a s-v column-strict balanced labelling $T$ with largest label $M$, a border cell containing $M$ is called a \emph{type-I maximal cell} if it has a single label $M$, and \emph{type-II maximal cell} if it contains more than one labels.
\end{definition}

\begin{theorem}\label{thm:sv_removing_border_balanced}
Let $T$ be a s-v column-strict labelling of $D(w)$, and let $\alpha$ be a border cell containing the largest label $M$ in $T$. Let $T\setminus \alpha$ be the s-v labelling we obtain from $T$ as follows: If $\alpha$ is a type-II maximal cell, then simply delete the label $M$ from the label set of $\alpha$. If $\alpha$ is a type-I maximal cell, then delete all the boxes of $\alpha$ and switch pairs of rows $(i + rn, i + 1 + rn)$ for all $r\in \ZZ$ from $T$. 
Then $T$ is balanced if and only if $T \setminus \alpha$ is balanced.
\end{theorem}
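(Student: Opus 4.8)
The plan is to follow the proof of Theorem~\ref{thm:removing_border_balanced} as literally as possible, working throughout with $3\times 3$ subdiagrams as Definition~\ref{def:sv_balanced} requires. Write $\alpha=\cell{(i,j)}$ with $j=w(i+1)$ and put $w'=ws_i$: in the type-I case $T\setminus\alpha$ is an s-v labelling of $D(w')$, and in the type-II case it is again an s-v labelling of $D(w)=D(w')$, only the label set of $\alpha$ having shrunk. By Definition~\ref{def:sv_balanced} it suffices to prove, for every triple $a<b<c$, that the s-v subdiagram $D_{abc}$ built from $T$ is balanced if and only if the one built from $T\setminus\alpha$ is. Since $T$ and $T\setminus\alpha$ agree off the periodic orbit of $\alpha$, these two subdiagrams coincide unless some copy $(i+rn,\,w(i+1+rn))$ of $\alpha$ sits inside $D_{abc}$; because $w$ is a bijection this forces both $i+rn$ and $i+1+rn$ to lie in $\{a,b,c\}$, and after translating by the period (which preserves balancedness) we may assume $\{i,i+1\}\subseteq\{a,b,c\}$ with the remaining index either $>i+1$ or $<i$. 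Two geometric facts then handle the small cases: a border cell has empty bottom-arm, so the hook at $\alpha$ is purely horizontal, and when $\{i,i+1\}\subseteq\{a,b,c\}$ the cell $\alpha$ is never the corner of a $\Gamma$-shaped $D_{abc}$, since that corner occupies the smallest of the three columns while $\alpha$ lies in column $w(i+1)$ with $w(i)>w(i+1)$. Thus every $D_{abc}$ we must inspect either has at most two boxes --- where the verification is short, using that $M$ is maximal and $\alpha$ a border cell --- or is a $\Gamma$ having $\alpha$ as its unique arm-box or unique leg-box.

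In the type-I case $T(\alpha)=\{M\}$, and the $\Gamma$-analysis is exactly that of Theorem~\ref{thm:removing_border_balanced}: deleting $\alpha$ and performing the row-swaps $i+rn\leftrightarrow i+1+rn$ turns a three-box $D_{abc}$ into a two-box subdiagram of $D(w')$, and for either position of $\{i,i+1\}$ inside $\{a,b,c\}$ the balancedness of $T$ at $D_{abc}$ and of $T\setminus\alpha$ at its image both reduce to one and the same comparison between the two surviving label sets. The singleton $M$ drops out of the computation: in any admissible filling of the s-v hook it can be selected from $\alpha$ only when $\alpha$ is the corner of a horizontal hook or an arm-entry facing an empty bottom-arm, and in either situation $M$ is automatically the $(\mathrm{arm}+1)$-st smallest entry, so it neither obstructs balance nor alters the comparison.

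The type-II case is where the new input enters. Here the diagram is unchanged and $T\setminus\alpha$ only deletes $M$ from $T(\alpha)$; since $|T(\alpha)|\ge 2$ and $M$ is globally maximal we have $\min T(\alpha)=\min\bigl(T(\alpha)\setminus\{M\}\bigr)$, so every $\rmin$- or $\bmin$-threshold to which $\alpha$ contributes is unaffected, and by the last clause of Lemma~\ref{lem:sv_max_label} the cell $\alpha$ is the only one of its row carrying $M$. For each relevant $\Gamma$-subdiagram I would compare the admissible label-choices at its one nontrivial hook. Removing $M$ from $\alpha$ can only shrink that set --- $M$ is never the forced minimum of $\alpha$, and when $\alpha$ serves as an arm- or leg-box the sole lost option is ``select $M$'' --- so the implication ``$T$ balanced $\Rightarrow T\setminus\alpha$ balanced'' is immediate. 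For the converse, the only choices available to $T$ but not to $T\setminus\alpha$ are those picking $M$ from $\alpha$; as $M$ is maximal, such a choice is admissible only when the $\bmin$- or $\rmin$-threshold at the corner equals $M$, i.e.\ the cell adjacent to the corner along the opposite arm is the singleton $\{M\}$ --- which by Lemma~\ref{lem:sv_max_label} is the only way that row can meet $M$. One then checks that this extra choice cannot spoil balance: the $\Gamma$-corner lies in a row meeting $M$ only through its own border cell, which is distinct from the corner, so the corner value is $<M$, and the admissibility rules together with the hypothesis that $T\setminus\alpha$ is balanced pin down the remaining comparisons.

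The genuine obstacle I expect is precisely this last bookkeeping: across the handful of $\Gamma$-configurations $D_{i,i+1,c}$ and $D_{a,i,i+1}$ one must track how the set-valued $\rmin/\bmin$ picking rules interact with the maximal label $M$ occupying the arm- or leg-box $\alpha$, using Lemma~\ref{lem:sv_max_label} repeatedly to locate where $M$ may appear. Everything else is the same formal localization as in the single-valued Theorem~\ref{thm:removing_border_balanced}; it is only the admissible-choice sets of s-v hooks that require the extra care.
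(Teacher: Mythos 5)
Your overall architecture --- localizing to $3\times 3$ subdiagrams via Definition~\ref{def:sv_balanced}, observing that $\alpha$ can enter a subdiagram $D_{abc}$ only with $\{i,i+1\}\subseteq\{a,b,c\}$ and never as the corner of a $\Gamma$, and reducing the type-I case to the two comparisons of Theorem~\ref{thm:removing_border_balanced} --- is sound, and it is what the paper's (one-line) proof intends. The type-II ``only if'' direction is also fine: deleting $M$ from $T(\alpha)$ changes neither $\min T(\alpha)$ nor any $\rmin/\bmin$ threshold, so the admissible fillings for $T\setminus\alpha$ are a subset of those for $T$.

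The gap is in the type-II ``if'' direction, exactly at the sentence ``one then checks that this extra choice cannot spoil balance.'' It can. Consider the $\Gamma$-subdiagram $D_{i,i+1,c}$ with corner $(i,w(c))$, arm $\alpha$, and leg $(i+1,w(c))$. If $T(i+1,w(c))=\{M\}$, then $\bmin=M$, so selecting $M$ from $\alpha$ is admissible; the leg necessarily contributes $M$; and since $\alpha$ is the only cell of row $i$ carrying $M$, every corner label $x$ satisfies $x<M$. The filling $(x,M,M)$ rearranges so that the corner receives $M\neq x$, so the hook is \emph{not} balanced, even though $T\setminus\alpha$ may be. Concretely, take $w=[3,2,1]$, $T(1,1)=\{2\}$, $T(1,2)=\{1,3\}$, $T(2,1)=\{3\}$, and $\alpha=\cell{(1,2)}$ (a type-II maximal cell, $M=3$): here $T\setminus\alpha$ is balanced and column-strict, but the admissible choice $(2,3,3)$ at the hook of $(1,1)$ shows $T$ is not balanced. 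A symmetric failure occurs when $\alpha$ is the leg of $D_{a,i,i+1}$ and the arm box is the singleton $\{M\}$. So the equivalence cannot be established by the bookkeeping you describe: one must either add hypotheses restricting which border cell $\alpha$ may be chosen (as is implicitly done in the proof of Theorem~\ref{thm:affine_stable_grothendieck}, where type-I maximal cells are always removed first), or explain why the configuration above cannot occur. Your proposal does neither --- and note that the paper's own proof, which asserts flatly that ``removing the largest label $M$ from a type-II maximal cell does not affect the balancedness,'' is silent on the same point, so this is genuinely the step that needs an argument rather than an assertion.
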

\begin{proof}
This is a routine verification following the arguments we used for the proof of Theorem~\ref{thm:removing_border_balanced}. Definition~\ref{def:sv_balanced} replaces Lemma~\ref{lem:localization} in the set-valued case. Note that removing the largest label $M$ from a type-II maximal cell does not affect the balancedness of the diagram.
\end{proof}

Now we present the main theorem of this section.

\begin{theorem}~\label{thm:sv_canonical_labelling}
Let $w\in \ASigma_ n$ be an affine permutation. The map $a \mapsto T_a$ is a bijection from the set of all nilHecke words $a$ in $\AUU_n$ with $S(a) = w$ to the set of all s-v injective balanced labellings of $D(w)$.
\end{theorem}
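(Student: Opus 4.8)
The plan is to mimic the proof of Theorem~\ref{thm:canonical_labelling}, using the set-valued analogues (Proposition~\ref{prop:sv_canonical_balanced}, Lemma~\ref{lem:sv_max_label}, Theorem~\ref{thm:sv_removing_border_balanced}) in place of their single-valued counterparts. Proposition~\ref{prop:sv_canonical_balanced} already shows that $a\mapsto T_a$ lands in the set of s-v injective balanced labellings, so the content is injectivity and surjectivity, and I would establish both by constructing an explicit inverse via repeated removal of a maximal cell.

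First I would describe the inverse map $\Psi$. Given an s-v injective balanced labelling $T$ of $D(w)$ with largest label $M=|T|$, Lemma~\ref{lem:sv_max_label} guarantees the existence of a border cell $\alpha$ containing $M$; moreover, if the row index of such a border cell is not uniquely determined, the lemma's last sentence together with column-strictness will force the ambiguous cells to be type-I maximal cells in rows differing by at least $2$ (this is exactly the commuting situation), so the resulting nilHecke letter is still well-defined. Applying Theorem~\ref{thm:sv_removing_border_balanced}, $T\setminus\alpha$ is an s-v injective balanced labelling of $D(w')$ where $w'=ws_{r(\alpha)}$ if $\alpha$ is type-I, and $w'=w$ if $\alpha$ is type-II; in either case $|T\setminus\alpha| = M-1$. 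Iterating, we strip off $\alpha_M,\alpha_{M-1},\ldots,\alpha_1$ and record the row indices $r(\alpha_M),\ldots,r(\alpha_1)$, then set $\Psi(T) = (r(\alpha_1), r(\alpha_2),\ldots, r(\alpha_M))$. The key point is that the multiplication rule in $\AUU_n$ — $u_i u_{w'} = u_{s_i w'}$ if $i$ is not a descent of $w'$, and $u_i u_{w'} = u_{w'}$ if $i$ is — exactly matches the two cases of the cell removal (type-I $\leftrightarrow$ non-descent on the left/descent on the right; type-II $\leftrightarrow$ descent), so $S(\Psi(T)) = w$ and $\Psi(T)$ is a genuine nilHecke word.

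Next I would check $\Psi\circ(a\mapsto T_a) = \mathrm{id}$ and $(a\mapsto T_a)\circ\Psi = \mathrm{id}$. For the first, given a nilHecke word $a=(a_1,\ldots,a_k)$, the recursive definition of $T_a$ builds the labelling by adding label $j$ either into the border cell of row $a_j$ (if $a_j$ is a descent of $S(a_1,\ldots,a_{j-1})$, i.e. a type-II addition) or as a new singleton cell after swapping rows (if not, a type-I addition); this is precisely the inverse of one step of $\Psi$, and an induction on $k$ (peeling off the last letter, as in the proof of Theorem~\ref{thm:encoding_decoding}) closes the argument, the base cases $k=0,1$ being immediate. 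For the second direction, starting from an arbitrary s-v injective balanced labelling $T$, reading off $\Psi(T)=a$ and then reconstructing $T_a$ retraces the same sequence of removals in reverse; since at each stage the largest label sits in the cell we put it in (again by Lemma~\ref{lem:sv_max_label}), we recover $T$ exactly.

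The main obstacle is the well-definedness of $\Psi$ — showing the nilHecke word does not depend on which maximal border cell we choose at each step when several exist. Two distinct border cells carrying $M$ in the same row are excluded by Lemma~\ref{lem:sv_max_label} (if either has $\geq 2$ labels it is the unique $M$-cell in its row; if both are singletons, a balancedness argument at the cell above one of them, as in Lemma~\ref{lem:max_label}, forces an $M$ above, contradicting column-strictness). So the only genuine choice is between border cells in two different rows $i\neq i'$; the same column-strictness argument shows $|i-i'|\geq 2$, hence $u_{s_i}$ and $u_{s_{i'}}$ commute in $\AUU_n$ and the removals affect disjoint pairs of rows, so the two possible outcomes of $\Psi$ differ only by a commutation of adjacent letters with indices at distance $\geq 2$. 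Since any two nilHecke words $a,a'$ with $S(a)=S(a')$ that differ by such commutations define the same canonical labelling — the row-swap operations for rows $i$ and $i'$ commute, exactly as in the well-definedness argument at the end of the proof of Theorem~\ref{thm:affine_stanley} — we conclude $T_a = T_{a'}$, and both $\Psi$ and the correspondence it inverts are well-defined bijections.
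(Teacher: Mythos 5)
Your proposal is correct and follows essentially the same route as the paper, whose proof is simply a citation of Lemma~\ref{lem:removing_border_cell}, Proposition~\ref{prop:sv_canonical_balanced}, Lemma~\ref{lem:sv_max_label}, and Theorem~\ref{thm:sv_removing_border_balanced}; you have just written out the routine details of how those results combine. One small remark: the well-definedness issue you spend the last paragraph on is vacuous here, since an \emph{injective} s-v labelling contains each label exactly once, so the largest label sits in a unique (border) cell and no choice ever arises --- that concern only matters for the column-strict case treated in Theorem~\ref{thm:affine_stable_grothendieck}.
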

\begin{proof}
This is a direct consequence of Lemma~\ref{lem:removing_border_cell}, Proposition~\ref{prop:sv_canonical_balanced}, Lemma~\ref{lem:sv_max_label}, and Theorem~\ref{thm:sv_removing_border_balanced}.
\end{proof}

As in the case of single-valued labellings, we have a direct formula for decoding nilHecke words from s-v injective balanced labellings. The following theorem is a set-valued generalization of Theorem~\ref{thm:encoding_decoding}

\begin{theorem}~\label{thm:sv_encoding_decoding}
Let $T$ be a s-v injective balanced labelling of $D(w)$ with $\lvert T \rvert = k$, $w\in \ASigma_n$. For each $t = 1, 2, \ldots, k$, let $\alpha_t$ be the box in $[D(w)]$ labelled by $t$ and define $I(t)$, $R^{+}(t)$, and $U^+(t)$ as follows.
\begin{eqnarray*}
I(t) &:= &\text{the row index of }\alpha_t.\\
R^{+}(t) &:= &\text{the number of boxes in the same row of }\alpha_t,\\
&&\text{whose minimal label is greater than } t.\\
U^{+}(t) &:= &\text{the number of boxes above }\alpha_t\text{ in the same column,}\\
&&\text{whose minimal label is greater than } t.
\end{eqnarray*}
Let $a = (a_1, a_2, \ldots, a_k)$ be the nilHecke word whose canonical labelling is $T$. Then, for each $t = 1, 2, \ldots, k$,
$$a_t = I(t) + R^{+}(t) - U^{+}(t)\mod n.$$
\end{theorem}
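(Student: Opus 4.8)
The plan is to mimic the proof of Theorem~\ref{thm:encoding_decoding} by induction on $k = |T|$, using Theorem~\ref{thm:sv_removing_border_balanced} to strip off the largest label. As in the single-valued case, I would actually prove the equivalent reformulation
$$I(t) = a_t + U^+(t) - R^+(t) \pmod n,$$
which is trivially true when $k = 0$ or $1$. For the inductive step, let $\alpha$ be the border cell containing the largest label $M$ that was added last in the canonical construction; it sits in row $i = a_k$ (in the fundamental window), and $\hat w = S(a')$ where $a' = (a_1,\ldots,a_{k-1})$. By Theorem~\ref{thm:sv_removing_border_balanced}, $T\setminus\alpha$ is a s-v injective balanced labelling of $D(\hat w)$ (or of $D(w)$ itself, if $\alpha$ is type-II), whose corresponding nilHecke word is exactly $a'$, so the formula holds for all $t \le k-1$ with the hatted quantities $\hat I, \hat R^+, \hat U^+$.

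The argument then splits according to the two types of maximal cell. If $\alpha$ is a \emph{type-I} maximal cell, then $a_k$ is not a descent of $\hat w$, and passing from $T\setminus\alpha$ to $T$ performs exactly the row-swap of rows $a_k + rn$ and $a_k+1+rn$ together with the insertion of the new boxes labelled $\{k\}$ — this is the same diagram surgery as in Theorem~\ref{thm:encoding_decoding}, so I would reuse its four-case analysis verbatim: (1) for $t = k$ we have $U^+(k)=R^+(k)=0$ and $I(k)=a_k$; (2) if $t<k$ and $t$ does not occur in rows $a_k$ or $a_k+1$, nothing changes; (3) if $t$ occurs in row $a_k$, then $I(t) = \hat I(t)+1$, $R^+(t)=\hat R^+(t)$, and since by Lemma~\ref{lem:sv_global} (balancedness of the hook, or directly by Definition~\ref{def:sv_balanced}) the \emph{minimal} label directly below $t$ in $D(\hat w)$ exceeds $t$, that box contributes a new unit to $U^+(t)$ after the swap, so $U^+(t)=\hat U^+(t)+1$; (4) symmetrically if $t$ occurs in row $a_k+1$, then $I(t) = \hat I(t)-1$, $R^+(t) = \hat R^+(t)+1$, and the box directly above $t$ in $D(\hat w)$ has minimal label less than $t$ so was not counted in $\hat U^+(t)$ and is not counted in $U^+(t)$; in each case the two sides of the claimed identity change by the same amount modulo $n$.

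The genuinely new case is when $\alpha$ is a \emph{type-II} maximal cell, i.e. $a_k$ \emph{is} a descent of $\hat w$ and $D(w) = D(\hat w)$: here $T$ is obtained from $T\setminus\alpha$ merely by adjoining the label $k$ to the set $T(\alpha)$, with $\alpha$ the border cell in row $i = a_k$. No rows are swapped and no boxes move, so $I(t)$ is unchanged for every $t$, and for $t<k$ neither $R^+(t)$ nor $U^+(t)$ changes because $k$ is the \emph{largest} label and hence never the \emph{minimal} label of any box containing it — so adding $k$ to $T(\alpha)$ cannot alter any "minimal label $> t$" count. Thus the formula persists unchanged for $t < k$. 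It remains to check it for $t = k$ itself: $\alpha$ lies in row $a_k$, so $I(k) = a_k$; since $\alpha$ is a border cell its hook $H_\alpha$ is purely horizontal, so there are no boxes above $\alpha$ in its column and $U^+(k) = 0$; and by the last clause of Lemma~\ref{lem:sv_max_label}, a type-II maximal cell is the unique cell in its row containing $M$, and every box strictly to its right in row $a_k$ has all labels $\le M$ hence minimal label $\le k$ — wait, more carefully, every box to the right of the border cell has its minimal label at most $M=k$, but we need it to be \emph{not} greater than $k$, which holds since $k$ is the maximum label overall — so $R^+(k) = 0$ as well, and $I(k) = a_k + U^+(k) - R^+(k)$ holds.

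The main obstacle I anticipate is the type-II verification for $t = k$: one must be sure that \emph{no} box in row $a_k$ to the right of the border cell $\alpha$ has minimal label exceeding $k$. This is immediate because $k = M$ is the global maximum of all labels, so every minimal label is $\le k$, giving $R^+(k)=0$; but it is worth spelling out that the border cell's hook is horizontal (ruling out contributions to $U^+(k)$) and invoking the uniqueness clause of Lemma~\ref{lem:sv_max_label} to know $\alpha$ is genuinely the cell that gets the label rather than some cell to its left. Everything else is bookkeeping parallel to Theorem~\ref{thm:encoding_decoding}, with Definition~\ref{def:sv_balanced} and Lemma~\ref{lem:sv_global} supplying the needed "box below has larger minimal label / box above has smaller minimal label" facts that in the single-valued proof came from Lemma~\ref{lem:localization}.
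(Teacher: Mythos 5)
Your proof follows the paper's argument essentially verbatim: induction on $\lvert T\rvert$ with the reformulation $I(t) = a_t + U^+(t) - R^+(t)$, the same four-case bookkeeping (with ``label'' replaced by ``minimal label'') when $a_k$ is not a descent of $S(a')$ (your type-I case), and the observation that no quantity changes for $t<k$ when the last letter merely adjoins the maximal label $k$ to an existing border cell (your type-II case, which is the paper's descent case). One small repair: a border cell can perfectly well have boxes \emph{above} it in its column (the horizontal hook only rules out boxes \emph{below}), so $U^+(k)=0$ should be justified, exactly like your argument for $R^+(k)=0$, by the fact that $k$ is the global maximum and hence no box has minimal label exceeding $k$.
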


\begin{proof}
Our claim is that $I(t) = a_t + U^+(t) - R^+(t)\mod n$.
We will show that this formula is valid for all $t$ by induction on $k$. The formula is obvious if $k = 0$ or $1$.

Let $\hat{a} = (a_1,a_2, \ldots, a_{k-1})$. If $a_k$ is a descent of $S(\hat{a})$, then $S(\hat{a}) = w = S(a)$ and $T_a$ is obtained from $T_{\hat{a}}$ by simply adding the largest label $k$ to the (already existing) border cell in the $a_k$-th row. In this case, it is clear that $I(t)$, $R^{+}(t)$, and $U^{+}(t)$ stays the same for $t = 1, 2, \ldots, k-1$ and that $a_k = I(k)$, so the formula holds by induction.

Now suppose $a_k$ is not a descent of $S(\hat{a})$ so $S(\hat{a}) = w s_{a_k} =: \hat{w}$. Again by induction, the above formula holds for $ \hat{a}$ so
$$\hat{I}(t) = a_t + \hat{U}^+(t) - \hat{R}^+(t) \mod n,$$
where the hatted expressions correspond to the labelling $T_{\hat{a}}$. We now analyze the change in the quantities on the left-hand side and the right-hand side of our claim.

\begin{enumerate}[(1)]
\item
If $t = k$, then $U^+(k) = R^+(k) = 0$ and obviously $I(k) = a_k$.
\item
If $t < k$ and $t$ does not occur in rows $a_k$ or $a_k + 1$ of $D(\hat{w})$, then none of the quantities change.
\item
If $t < k$ and $t$ occurs in row $a_k$, then $I(t) = \hat{I}(t) + 1$ and $R^+(t) = \hat{R}^+(t)$. Note that the minimal entry $t'$ of the box right below $t$ in $D(\hat{w})$ is greater than $t$ and it will move up when we do the exchange $s_{a_k}$. Thus $U^{+}(t) = \hat{U}^+(t) + 1$, and the changes on the two sides of the equation match.
\item
If $t < k$ and $t$ occurs in row $a_k + 1$, then $I(t) = \hat{I}(t) - 1$ and $R^+(t) = \hat{R}^+(t) + 1$. Note that the minimal entry $t'$ of the box right above $t$ in $D(\hat{w})$ is less than $t$ so it did not get counted in $\hat{U}^{+}(t)$. Thus $U^{+}(t) = \hat{U}^+(t)$, and the changes on the two sides of the equation match.
\end{enumerate}
\end{proof}

\subsection{Affine Stable Grothendieck Polynomials}
An affine stable Grothendieck polynomial of Lam \cite{Lam2006} can be defined in terms of words in \emph{affine nilHecke algebra} (see also \cite{Lam2009} and \cite{Morse2012}).

Let $w$ be an affine permutation in $\ASigma_n$. A \emph{cyclically decreasing nilHecke factorization} $\alpha$ of $w$ is a factorization $u_w = u_{v_1} u_{v_2} \cdots u_{v_k}$ where each $v_i$ is a cyclically decreasing affine permutation in $\ASigma_n$. The sequence $(\ell(v_1), \ell(v_2), \ldots, \ell(v_k))$ is called the \emph{type} of $\alpha$. Let $\lvert \alpha \rvert := \ell(v_1)+\ell(v_2) + \cdots +  \ell(v_k)$. The \emph{affine stable Grothendieck polynomial} $G_w$ is defined by
$$\widetilde{G}_w (x) = \sum_{\alpha} (-1)^{\lvert \alpha \rvert - \ell(w)} x_1^{\ell(v_1)} x_2^{\ell(v_2)} \cdots x_k^{\ell(v_k)},$$
where the sum is over all cyclically decreasing \emph{nilHecke} factorization $\alpha: u_w = u_{v_1}u_{v_2} \cdots u_{v_k}$ of $w$. Note that this function is a generalization of the usual stable Grothendieck polynomial and that its minimal degree terms ($\lvert \alpha \rvert = \ell(w)$) form the affine Stanley symmetric function. Lam \cite{Lam2006} showed that this function is a symmetric function.

In this section, we show that affine stable Grothendieck polynomials are the generating functions of the column-strict s-v balanced labellings.

\begin{theorem}~\label{thm:affine_stable_grothendieck}
Let $w\in \ASigma_n$ be an affine permutation. Then
$$\widetilde{G}_w(x) = \sum_{T} (-1)^{\lvert T \rvert - \ell(w)} x^T,$$
where the sum is over all \emph{column-strict} s-v balanced labellings $T$ of $D(w)$, and $x^T$ is the monomial $\prod_{b\in [D(w)]} \prod_{k\in T(b)} x_k$.
\end{theorem}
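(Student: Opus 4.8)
The plan is to mimic the proof of Theorem~\ref{thm:affine_stanley}, but now keeping track of the sign $(-1)^{\lvert T \rvert - \ell(w)}$ and allowing cells to carry more than one copy of the largest label. As in the single-valued case, it suffices to produce a type-preserving bijection $\phi$ between the set of column-strict s-v balanced labellings $T$ of $D(w)$ and the set of cyclically decreasing nilHecke factorizations $u_w = u_{v_1} u_{v_2} \cdots u_{v_k}$ of $w$, where the type of $T$ is the sequence $(\#1\text{'s in }T, \#2\text{'s in }T, \ldots)$ counted with multiplicity in $[D(w)]$. Indeed, if $\phi$ is type-preserving then $\ell(v_i)$ equals the number of $i$'s in $T$, so $\lvert \alpha \rvert = \lvert T \rvert$ and $x^T = x_1^{\ell(v_1)} x_2^{\ell(v_2)} \cdots$, and the signs match term by term.

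First I would build $\phi$ by the peeling procedure from Theorem~\ref{thm:sv_removing_border_balanced}. Given $T$ with largest label $M$, Lemma~\ref{lem:sv_max_label} guarantees a border cell $\alpha$ containing $M$; remove it as in Theorem~\ref{thm:sv_removing_border_balanced} (delete the label $M$ from $\alpha$ if $\alpha$ is type-II, or delete the cell and swap the row pair if $\alpha$ is type-I). Either way we obtain a column-strict s-v balanced labelling of either $D(w)$ (type-II case, via $u_{s_{r(\alpha)}} u_w = u_w$ since $r(\alpha)$ is a descent) or $D(w s_{r(\alpha)})$ (type-I case). Iterating until the diagram is empty produces a nilHecke word $a = (a_1, \ldots, a_k)$ read off in reverse order of removal, and $S(a) = w$. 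Grouping the $a_t$'s according to which ``largest-label value'' was being removed gives the factorization $\phi(T) = (u_{v_1}, \ldots, u_{v_k})$; the $r$-th group consists of all indices $a_t$ corresponding to the removal of copies of the label value $r$.

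Next I would verify three things, all by arguments parallel to the proof of Theorem~\ref{thm:affine_stanley}. (1) Each $v_r$ is cyclically decreasing: if indices $i$ and $i+1$ both occur in the $r$-th group, column-strictness of the diagram at the moment $s_{i+1}$ is ``unwound'' forces $i+1$ to precede $i$, exactly as before; the new subtlety is that a type-II cell may be peeled, but since removing a label from a type-II cell leaves the diagram (hence its border cells and column-strictness) unchanged, the same contradiction argument applies. (2) $\phi$ is well-defined, i.e. independent of the order in which equal-largest-label border cells are peeled: if two such border cells sit in rows $i$ and $i+1$, the box directly above the lower one would be forced to contain $M$, violating column-strictness, so any two choices differ only by commuting moves $|r(c_x) - r(c_y)| \geq 2$ within a single group, and these commute in $\AUU_n$. (3) For the inverse $\psi$: given a cyclically decreasing nilHecke factorization, pick a cyclically decreasing nilHecke word for each $v_i$, concatenate to a nilHecke word $a$ with $S(a) = w$, form the canonical s-v labelling $T_a$ via Theorem~\ref{thm:sv_canonical_labelling}, and recolor the labels so that all indices coming from the $i$-th group receive the common value $i$. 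One checks $\psi(T_a)$ is column-strict: if two cells in one column both got value $M$, then (arguing as in Theorem~\ref{thm:affine_stanley}) every cell between them carries $M$ and the cell one step above the top one also carries $M$, forcing a non-cyclically-decreasing pattern inside one group, a contradiction. Well-definedness of $\psi$ follows since two cyclically decreasing nilHecke words for the same $v_i$ differ by commutation relations only, and commuting disjoint row-swaps commute as operations on diagrams.

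The main obstacle, and the only genuinely new point beyond transcribing the Stanley-function argument, is the interaction between type-II maximal cells and the recoloring/peeling: one must check that repeatedly peeling copies of $M$ from a type-II cell (each peel contributing a $u_i u_w = u_w$ step, i.e.\ an ``extra'' letter that inflates $\lvert \alpha\rvert$ past $\ell(w)$) corresponds precisely to the repeated $u_i$'s inside one cyclically decreasing factor, and conversely that Theorem~\ref{thm:sv_canonical_labelling} and Theorem~\ref{thm:sv_encoding_decoding} reproduce exactly such a cell. Concretely, one must confirm that a nilHecke word in which the same letter $i$ is applied while $i$ is already a descent produces, under the canonical construction, a type-II cell accumulating multiple labels, and that these labels all lie in the same group after recoloring — so that the count $\ell(v_r)$ really equals the number of $r$'s in $T$. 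Once this matching of the ``idempotent'' $u_i u_w = u_w$ moves with type-II cells is pinned down, the sign $(-1)^{\lvert T\rvert - \ell(w)}$ is carried along automatically, $\phi$ and $\psi$ are seen to be mutually inverse exactly as at the end of the proof of Theorem~\ref{thm:affine_stanley}, and the theorem follows.
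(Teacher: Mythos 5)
Your outline is the same as the paper's: a type-preserving bijection between column-strict s-v balanced labellings and cyclically decreasing nilHecke factorizations, built by peeling maximal border cells via Lemma~\ref{lem:sv_max_label} and Theorem~\ref{thm:sv_removing_border_balanced}, with the inverse given by canonical s-v labellings and Theorem~\ref{thm:sv_canonical_labelling}. Two points separate your sketch from a complete proof. First, you let $\phi$ peel \emph{any} border cell containing the largest label $M$; the paper instead fixes a selection rule --- peel type-I maximal cells while any exist, and only when all maximal cells are type-II peel one in a row $i$ such that row $i-1$ is $M$-free, the existence of such a row being exactly Lemma~\ref{lem:sv_no_M_row}, which you never invoke. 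That rule is what lets the paper reduce well-definedness to the statement that any two simultaneously available choices are both type-I and sit in non-adjacent rows; with an arbitrary choice you must separately rule out bad interactions between a type-II peel (which removes a label but moves no rows) and nearby occurrences of $M$, and your claim that ``the same contradiction argument applies'' is asserted rather than checked.

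Second, you correctly identify the matching between type-II cells and the idempotent steps $u_i u_w = u_w$ as the crux, but you explicitly leave it ``to be pinned down.'' In the paper this is not an afterthought: it is carried by the definition of the canonical s-v labelling (whose case (1) is precisely the descent step that appends a new label to an already existing border cell), by Proposition~\ref{prop:sv_canonical_balanced} and Theorem~\ref{thm:sv_canonical_labelling}, and by the observation that the several labels stored in one type-II cell are distinct values, hence land in distinct groups after recoloring, so that $\ell(v_r)$ really equals the number of $r$'s in $T$ and the sign $(-1)^{\lvert T\rvert-\ell(w)}$ matches $(-1)^{\lvert\alpha\rvert-\ell(w)}$ term by term. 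The architecture of your argument is right, but the two items you defer are exactly the new content of the set-valued case and need to be written out.
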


Before we give a proof of the theorem, we state a general fact about column-strict s-v balanced labellings.

\begin{lemma}~\label{lem:sv_no_M_row}
Let $T$ be a column-strict s-v balanced labelling of $D(w)$ where $w\in \ASigma_n$. Let $M$ be the largest label of $T$. Then, there exists $p\in \{1, 2, \ldots, n\}$ such that there is no label $M$ in the $p$-th row of $T$.
\end{lemma}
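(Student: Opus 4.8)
The plan is to argue by contradiction: suppose that every one of the rows $1, 2, \ldots, n$ of $[D(w)]$ contains at least one occurrence of the largest label $M$. By Lemma~\ref{lem:sv_max_label}, each such row $i$ then has a border cell containing $M$, and in particular each of $1, 2, \ldots, n$ is a (right) descent of $w$. The key observation is that an affine permutation in $\ASigma_n$ cannot have \emph{all} of $s_0, s_1, \ldots, s_{n-1}$ as descents unless $w$ is very special; in fact, if $w(i) > w(i+1)$ for all $i$ (indices mod $n$), then the window $[w(1), \ldots, w(n)]$ would have to be strictly decreasing \emph{and} satisfy the wrap-around condition $w(n) > w(n+1) = w(1) + n$, forcing $w(n) > w(1) + n > w(1)$, which contradicts $w(1) > w(2) > \cdots > w(n)$. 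Hence not all of $0, 1, \ldots, n-1$ can be descents, so there is some index $p$ that is not a descent of $w$, and by the contrapositive of Lemma~\ref{lem:sv_max_label} the $p$-th row of $T$ contains no $M$.

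First I would spell out the descent translation carefully, since the border cells live in the fundamental window but the relevant comparison $w(i) \lessgtr w(i+1)$ may involve $w(n+1) = w(1) + n$; the case $i = n$ (equivalently the index $0$ mod $n$) is exactly where the affineness enters and is the one place a naive ``a permutation can't be strictly decreasing'' argument would be incomplete. Then I would invoke Lemma~\ref{lem:sv_max_label} in the stated direction: if row $i$ contains an $M$, then $i$ is a descent of $w$. Taking the contrapositive over $i \in \{1, \ldots, n\}$ and combining with the fact just established that $\{0, 1, \ldots, n-1\}$ are not all descents of $w$ (equivalently, not all of $\{1, \ldots, n\}$, reading indices mod $n$) yields an index $p$ with no $M$ in row $p$, which is the claim.

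The main obstacle is purely bookkeeping: making sure the index $p$ produced really lands in $\{1, 2, \ldots, n\}$ and that the ``not all descents'' argument handles the wrap-around index $0$ correctly rather than silently assuming $i$ ranges only over $1, \ldots, n-1$. Once the descent-to-border-cell dictionary (Lemma~\ref{lem:sv_max_label}) and the elementary fact that no affine permutation of period $n$ has every simple reflection as a descent are both in hand, the proof is immediate; there is no subtle combinatorics or induction required here, which is why I would keep the writeup short and foreground the wrap-around check as the only genuine content.
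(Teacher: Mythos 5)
Your proof is correct and follows essentially the same route as the paper's: both reduce, via Lemma~\ref{lem:sv_max_label}, to the observation that if every row $1,\ldots,n$ of the fundamental window contained an $M$ then every index would be a descent of $w$, which cannot happen for an affine permutation. Your write-up is in fact slightly tighter than the paper's: you skip the preliminary count of the number of $M$'s in the fundamental window (which the paper uses but which is not needed for the pigeonhole-free direct argument), and you supply the wrap-around justification $w(1)>w(2)>\cdots>w(n)>w(n+1)=w(1)+n$ for the step the paper dismisses with ``which is impossible.''
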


\begin{proof}
By the column-strictness and the periodicity of the diagram, there can be at most $n$ $M$'s in the fundamental window $\{1, 2, \ldots, n\} \times \ZZ$. If the number of $M$'s in the fundamental window is less than $n$, then the lemma is true.

Suppose the number of $M$'s in the fundamental windows is exactly $n$. If there is a row containing two or more $M$'s, then again the proof follows. If each row $p\in \{1, 2, \ldots, n\}$ contains exactly one $M$, then by Lemma~\ref{lem:sv_max_label} every $p$ is a descent, which is impossible.
\end{proof}

\begin{proof}[Proof of Theorem~\ref{thm:affine_stable_grothendieck}]
Given a column-strict s-v balanced labelling $T$, we call the sequence $([$the number of $1$'s in $T]$, $[$the number of $2$'s in $T]$, $\ldots )$ the \emph{type} of the labelling. It is enough to show that there is a type-preserving bijection $\phi$ from a column-strict s-v labelling of $D(w)$ to a cyclically decreasing nilHecke factorization of $w$.

Let us construct $\phi$ as follows. Given a column-strict s-v labelling $T$ with $t = \lvert T \rvert$, let $M$ be its largest label. If $T$ has a \emph{type-I} maximal cell, then let $c_1$ to be any of those type-I maximal cells. If all the border cells with label $M$ of $T$ is type-II, then let $c_1$ to be a maximal cell in some row $i$ such that there is no $M$ in the $i-1$-st row (by Lemma~\ref{lem:sv_no_M_row}). Let $r(c_1)$ be the row index of $c_1$ in the fundamental window. By Theorem~\ref{thm:sv_removing_border_balanced}, we obtain a column-strict s-v balanced labelling $T\setminus c_1$ by (1) removing the cell $c_1$ and switching all pairs of rows $(r(c_1) + kn, r(c_1) + kn + 1)$ for all $k \in \ZZ$ if $c_1$ is type-I, or (2) simply removing $M$ from the label set of $c_1$ if $c_1$ is type-II. The resulting labelling $T\setminus c_1$ is a labelling of length $t - 1$ of the diagram of the affine permutation $w s_{r(c_1)}$ in case (1), or of $w$ in case (2). In $T\setminus c_1$, we again pick a maximal cell $c_2$ by the same procedure (by Theorem~\ref{thm:sv_removing_border_balanced}) and obtain the labelling $T\setminus c_1 \setminus c_2$ of length $t-2$. We continue this process removing labels in cells $c_1, c_2, \ldots, c_t$ until we get the empty digram which corresponds to the identity permutation. Then, $r(c_t), r(c_{t-1}), \ldots, r(c_1)$ is a nilHecke word such that $w = S(r(c_t), r(c_{t-1}), \ldots, r(c_1))$. Now in this nilHecke word, group the terms together in the parenthesis if they correspond to removing the same largest label of the digram in the process and this will give you a factorization of $u_w$. With careful examination, one can see that words in the same parenthesis is cyclically decreasing so this gives a cyclically decreasing nilHecke factorization of $w$ corresponding to $T$ under $\phi$.

Now we show that $\phi$ is well-defined regardless of the choice of $c_i$'s in the process. It is enough to show that if we had a choice of taking one of the two border cells $c_x$ and $c_y$ with the same largest labelling at some point, then $\lvert r(c_x) - r(c_y) \rvert \geq 2$ so the corresponding simple reflections commute inside a parenthesis in $\phi(T)$. Suppose $\lvert r(c_x) - r(c_y) \rvert = 1$ and assume $r(c_x) = i$ and $r(c_y) = i + 1$. By construction, this can only happen when both $c_x$ and $c_y$ are type-I maximal cells. If we let $b$ be the box right above $c_y$ in the $i$-th row, the label of $b$ must be equal to $M$ by the balancedness at $b$. This is impossible because the labelling is column-strict.

To show that $\phi$ is a bijection, we construct the inverse map $\psi$ from a cyclically decreasing nilHecke factorization to a column-strict s-v balanced labelling. Given a cyclically decreasing nilHecke factorization $u_w = u_{v_1} u_{v_2} \cdots u_{v_q}$, take any cyclically decreasing reduced decomposition of $v_i$ for each $i$ inside a parenthesis, and then their concatenation is a nilHecke word which multiplies to $u_w$. By Theorem~\ref{thm:sv_canonical_labelling}, this nilHecke word corresponds to a unique injective s-v labelling of $D(w)$. Now change the labels in the injective s-v labelling so that the labels corresponding to $u_i$'s in the $k$-th parenthesis will have the same label $k$. The resulting s-v labelling is defined to be the image of the given cyclically decreasing nilHecke factorization under $\psi$. It is easy to see that this s-v labelling is also balanced so it remains to show that this s-v labelling is column-strict and that the map is well-defined.

Given any label $M$, suppose we are at the point at which we have removed all the labels greater than $M$ during the above procedure, and suppose that there are two boxes $c_x, c_y$ which contains the same label $M$ in the same column $j$, where $c_x$ is below $c_y$. These two boxes must be removed before we remove any other boxes with labels less than $M$, so to make $c_y$ a border cell, every boxes between $c_x$ and $c_y$ (including $c_x$) should be removed before $c_y$ gets removed. This implies that every box between $c_x$ and $c_y$ has a single label $\{M\}$. Let $c_x = (i,j)$. Then the box $(i-1, j)$ should also have a label $M$ and it gets removed after the box $c_x$ is removed. This implies that the index $i - 1$ preceded $i$ inside a parenthesis in the original nilHecke word, which contradict the fact that each parenthesis came from a cyclically decreasing decomposition. Thus the image of $\psi$ is column-strict.

Finally, we show that the map $\psi$ is well-defined. One easy fact from affine symmetric group theory is that any two cyclically decreasing decomposition of a given affine permutation can be obtained from each other via applying commuting relations only. Thus it is enough to show that the column-strict labellings coming from two reduced decompositions $(\cdots)\cdots(\cdots u_i u_j \cdots) \cdots (\cdots)$ and $(\cdots)\cdots(\cdots u_j u_i \cdots) \cdots (\cdots)$ coincides if $\lvert i - j \rvert \geq 2$ modulo $n$. This is straightforward because the operation of switching the pairs of rows $(i + rk, i + 1 + rk)$, $k \in \ZZ$ is disjoint from the operation of switching the pairs of rows $(j + rk, j + 1 + rk)$, $k \in \ZZ$.

From Theorem~\ref{thm:sv_canonical_labelling} and from the construction of $\phi$ and $\psi$, one can easily see that $\phi$ and $\psi$ are inverses of each other. This gives the desired bijection.
\end{proof}
\bigskip

\subsection{Grothendieck Polynomials}

Let us restrict our attention to finite permutations $w\in \Sigma_n$ for this section. In this case, there is a type-preserving bijection from column-strict labellings to \emph{decreasing} nilHecke factorizations of $w$, i.e., $u_w = u_{v_1} u_{v_2} \cdots u_{v_k}$ in the nilHecke algebra $\UU_n = \langle u_1, u_2, \ldots, u_{n-1} \rangle$, where each $v_i$ are permutations having \emph{decreasing} reduced word. Theorem~\ref{thm:affine_stable_grothendieck} reduces to a monomial expansion of the stable Grothendieck polynomial $G_w(x)$ in terms of column-strict s-v labellings of (finite) Rothe diagram of $w$.

Let $\mathfrak{G}_w(x)$ be the Grothendieck polynomial of Lascoux-Sch\"{u}tzenberger \cite{Lascoux1983}. Fomin-Kirillov \cite{Fomin1993} showed that 
\begin{equation}~\label{eqn:grothendick}
\mathfrak{G}_w (x) = \sum_{\beta} (-1)^{\lvert \alpha \rvert - \ell(w)} x_1^{\ell(v_1)} x_2^{\ell(v_2)} \cdots x_k^{\ell(v_k)},
\end{equation}
where the sum is over all \emph{flagged} decreasing nilHecke factorization $\beta: u_w = u_{v_1}u_{v_2} \cdots u_{v_k}$ of $w$, i.e., each $v_i$ has a decreasing reduced word $a_1 a_2 \cdots a_{\ell(v_i)}$ such that $a_j \geq i$ for all $j$.

We show in this section that this formula leads to another combinatorial expression for $\mathfrak{G}_w$ involving just a single sum over column-strict s-v balanced labellings with \emph{flag conditions}.

\begin{theorem}~\label{thm:grothendieck}
Let $w\in \Sigma_n$ be a finite permutation. Then
$$\mathfrak{G}_w(x) = \sum_{T} (-1)^{\lvert T \rvert - \ell(w)} x^T,$$
where the sum is over all \emph{column-strict} s-v balanced labellings $T$ of $D(w)$ such that for every label $t\in T(i,j)$, $t\leq i$.
\end{theorem}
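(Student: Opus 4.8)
The plan is to mirror the proof of Theorem~\ref{thm:affine_stable_grothendieck} almost verbatim, but tracking the flag conditions on both sides of the bijection. First I would recall that, by the Fomin--Kirillov formula~\eqref{eqn:grothendick}, it suffices to produce a type-preserving bijection $\phi$ between the set of column-strict s-v balanced labellings $T$ of $D(w)$ satisfying the flag condition ``$t\in T(i,j)\implies t\le i$'' and the set of flagged decreasing nilHecke factorizations $\beta: u_w = u_{v_1}u_{v_2}\cdots u_{v_k}$ (i.e.\ each $v_i$ has a decreasing reduced word whose letters are all $\ge i$). Since $w$ is finite, the decomposition-peeling procedure of Theorem~\ref{thm:sv_removing_border_balanced} stays inside finite permutations and the cyclically decreasing factorizations occurring in the proof of Theorem~\ref{thm:affine_stable_grothendieck} become ordinary decreasing factorizations; so $\phi$ itself is already built, and the only new content is: (a) if $T$ is flagged then $\phi(T)$ is flagged, and (b) if $\beta$ is flagged then $\psi(\beta)=\phi^{-1}(\beta)$ is flagged.

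For direction (a), I would run the peeling process of the proof of Theorem~\ref{thm:affine_stable_grothendieck}: repeatedly strip the largest label $M$ from a border cell (type-I or type-II) of the current labelling $T_0$ of $D(w_0)$, recording the row index $r(c)$, and grouping into parentheses those reflections coming from removing the same value $M$. The key claim is that if every label $t$ in box $(i,j)$ of $T_0$ satisfies $t\le i$, then every reflection $s_{r(c)}$ recorded while removing the value $M$ has $r(c)\ge M$; this is exactly the flag condition $a_j\ge i$ on the $i$-th factor once we observe the $M$-th factor corresponds to the $M$-th group of reflections (the largest remaining value equals the parenthesis index because we remove values in decreasing order $t, t-1, \dots$). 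To see $r(c)\ge M$: any border cell $c=(i,j)$ of the current diagram containing the value $M$ had $M$ present already in the original $T$ (removing strictly larger labels only permuted rows and the flag condition $t\le(\text{row of }t)$ is preserved under the row-swaps performed, since a label in row $i$ moves to row $i\pm1$ exactly as in items (3)--(4) of the proof of Theorem~\ref{thm:sv_encoding_decoding}, and one checks the inequality is maintained); hence in the current labelling $M$ sits in a row of index $\ge M$, so $r(c)\ge M$. I would state and prove this invariant as a small lemma, tracking how the quantity ``row index minus label'' behaves under each row swap, which is precisely the bookkeeping already done in the proof of Theorem~\ref{thm:sv_encoding_decoding}.

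For direction (b), given a flagged decreasing nilHecke factorization $\beta: u_w=u_{v_1}\cdots u_{v_q}$, apply $\psi$: concatenate decreasing reduced words of the $v_i$, form the canonical s-v labelling via Theorem~\ref{thm:sv_canonical_labelling}, then collapse the labels in the $k$-th parenthesis to the single value $k$. I would use Theorem~\ref{thm:sv_encoding_decoding} to read off, for a label $t$ landing in box $\alpha_t=(I(t),\ldots)$, that $a_t = I(t)+R^+(t)-U^+(t)\bmod n$; since $w$ is finite the mod-$n$ is immaterial for the ranges involved, and $R^+(t),U^+(t)\ge 0$ with $a_t$ the letter from the parenthesis indexed by (the collapsed value of) $t$, which is $\ge$ that parenthesis index by the flag hypothesis on $\beta$. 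Combined with $R^+(t)\le$ (number of boxes strictly to the right) one deduces $I(t)\ge a_t - R^+(t)+U^+(t)$, and a direct estimate using column-strictness and the shape of a Rothe diagram gives $I(t)\ge$ (collapsed value of $t$); I would spell out this inequality carefully as it is the crux. Finally, well-definedness of both maps is inherited verbatim from the proof of Theorem~\ref{thm:affine_stable_grothendieck} (commuting relations $|i-j|\ge2$ act on disjoint pairs of rows), and $\phi,\psi$ remain mutually inverse, completing the bijection and hence the theorem.

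The main obstacle I anticipate is direction (a): proving that the flag condition $t\le(\text{row index of }t)$ on the original labelling $T$ forces every recorded reflection index $r(c)$ (for the value $M$) to be $\ge M$, despite the intervening row-swaps that permute where the value $M$ sits. The delicate point is that a type-I removal of a strictly larger label swaps a pair of rows and can push an $M$ downward by one; one must verify the flag inequality is never violated — equivalently that an $M$ in row $i$ is only ever swapped from row $i$ into row $i+1$ or vice versa in a way consistent with $M\le i$ being maintained — and this requires invoking the same balancedness facts (Lemma~\ref{lem:localization}-style, via Definition~\ref{def:sv_balanced}) that drive items (3)--(4) of the proof of Theorem~\ref{thm:sv_encoding_decoding}.
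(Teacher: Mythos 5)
Your overall reduction is correct: by the Fomin--Kirillov formula~(\ref{eqn:grothendick}) it suffices to check that the bijection of Theorem~\ref{thm:affine_stable_grothendieck} matches the flag condition $t\le i$ on labellings with the flag condition ``letters of $v_i$ are $\ge i$'' on factorizations. But neither crux step is actually supplied, and the one you call a ``direct estimate'' conceals the real difficulty. The paper handles both directions at once with a single lemma: writing $i_t$ for the parenthesis index of the $t$-th letter (a weakly increasing sequence) and using $a_t=I(t)+R^{+}(t)-U^{+}(t)$ from Theorem~\ref{thm:sv_encoding_decoding}, it proves that $i_t\le a_t$ for all $t$ is equivalent to $i_t\le I(t)$ for all $t$. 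Neither implication is direct, because $R^{+}(t)$ and $U^{+}(t)$ pull in opposite directions; in each case one must pass to a well-chosen auxiliary label. For your direction (b): if $R^{+}(t)>0$, take $t'$ to be the largest label in the row of $\alpha_t$, so that $R^{+}(t')=0$ and $i_t\le i_{t'}\le a_{t'}=I(t')-U^{+}(t')\le I(t')=I(t)$. Your write-up stops exactly where this is needed --- from $i_t\le a_t=I(t)+R^{+}(t)-U^{+}(t)$ alone you cannot conclude $i_t\le I(t)$ when $R^{+}(t)>0$, and ``column-strictness and the shape of a Rothe diagram'' do not by themselves produce the inequality. (Symmetrically, for direction (a) the paper takes $t'$ to be the minimal label of the highest of the $U^{+}(t)=d$ boxes above $\alpha_t$, giving $i_t\le i_{t'}\le I(t')\le I(t)-d=a_t-R^{+}(t)\le a_t$.)

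Your direction (a) is a genuinely different route --- maintaining the flag inequality as an invariant through the peeling --- and it can be made to work, but the invariant is not automatic and you have not proved it. Concretely, a type-I removal at row $r$ pushes a box from row $r+1$ up to row $r$, and a label equal to $r+1$ in that box would break the invariant. To rule this out one must note that the NW property forces a box at $(r,j')$ directly above the moving box $(r+1,j')$, and that $3\times3$ balancedness of the $\Gamma$ formed with the border cell being removed (whose unique label is the current maximum, so \emph{every} label of the leg box is an admissible choice), combined with column-strictness, forces every label of the moving box to be strictly smaller than every label of $(r,j')$, hence $<r$ by the inductive flag hypothesis. This is provable but is more than the bookkeeping of items (3)--(4) of Theorem~\ref{thm:sv_encoding_decoding}; if you keep this route, state and prove it as a standalone lemma. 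Given that the paper's single lemma settles both directions in a few lines from the decoding formula, I would recommend adopting that argument instead.
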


The content of Theorem~\ref{thm:grothendieck} is that the flag condition in (\ref{eqn:grothendick}) translates to the flag condition $t \leq i$, $\forall t\in T(i,j)$. To be precise, the following lemma implies Theorem~\ref{thm:grothendieck}. (Note that the sequence $i_1 \leq i_2 \leq \cdots \leq i_k$ in the lemma corresponds to the column-strict labels we construct in the proof of Theorem~\ref{thm:affine_stable_grothendieck}.)

\begin{lemma}
Suppose that $a = (a_1, a_2, \ldots, a_k)$ is a nilHecke word in $\UU_n$ and let $T_a$ be a s-v balanced labelling corresponding to $a$. Let $i_1. i_2. \ldots, i_k$ be a sequence of positive integers satisfying $i_1 \leq i_2 \leq \cdots \leq i_k$. Then,
\begin{equation}~\label{eqn:flag1}
i_t \leq a_t
\end{equation}
holds for all $t = 1, 2, \ldots, k$ if and only if
\begin{equation}~\label{eqn:flag2}
i_t \leq I(t)
\end{equation}
holds for all $t = 1, 2, \ldots, k$. As before, $I(t)$ denotes the row index of the box containing the label $t$ in $T_a$.
\end{lemma}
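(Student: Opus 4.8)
The plan is to compare the two flag conditions by tracking, for each fixed label $t$, the relation between $a_t$ and $I(t)$ through the decoding formula already at our disposal. From Theorem~\ref{thm:sv_encoding_decoding} we have the identity $I(t) = a_t + U^+(t) - R^+(t) \pmod n$. The naive hope would be that $U^+(t) \geq R^+(t)$ always, so that $I(t) \geq a_t$ and one direction is free; but this is false in general, so the argument cannot be purely pointwise. Instead I would exploit the fact that the sequence $i_1 \le i_2 \le \cdots \le i_k$ is weakly increasing, and argue that the two systems of inequalities $\{i_t \le a_t\}_t$ and $\{i_t \le I(t)\}_t$ cut out the same set. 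The natural way to do this is induction on $k$, peeling off the largest label, exactly mirroring the inductive structure of the proof of Theorem~\ref{thm:sv_encoding_decoding}.

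First I would set $\hat a = (a_1,\dots,a_{k-1})$ and $\hat w = S(\hat a)$, and recall the two cases from that proof. If $a_k$ is a descent of $\hat w$, then $T_a$ is obtained from $T_{\hat a}$ by adjoining the label $k$ to the existing border cell in row $a_k$, so $I(t) = \hat I(t)$ for $t < k$ and $I(k) = a_k$; here both conditions (\ref{eqn:flag1}) and (\ref{eqn:flag2}) for $t = k$ read $i_k \le a_k = I(k)$, and for $t<k$ we simply invoke the inductive equivalence for $\hat a$ and $(i_1,\dots,i_{k-1})$. If $a_k$ is not a descent, then rows $a_k$ and $a_k+1$ are swapped and a new single-label cell is added in row $a_k$; the key point, extracted from cases (3) and (4) of the proof of Theorem~\ref{thm:sv_encoding_decoding}, is that for $t<k$ the row index $I(t)$ changes from $\hat I(t)$ by $+1$, $-1$, or $0$ exactly according to whether $t$ sits in row $a_k$, row $a_k+1$, or elsewhere, and that $a_t$ is of course unchanged. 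Here $I(k) = a_k$ again, so the top inequality is identical on both sides.

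The crux is therefore to show that, under the row-swap, the truth of the full system (\ref{eqn:flag1}) is preserved precisely when the truth of the full system (\ref{eqn:flag2}) is. The controlling observation is monotonicity: since $i_1 \le \cdots \le i_k$, if some label $t$ lies in row $a_k+1$ (so that $I(t) = \hat I(t) - 1$ decreases), then by column-strictness the box directly above it in row $a_k$ — if it carries a label $t'$ of the diagram at all — is smaller, $t' < t$ by Definition~\ref{def:sv_balanced} (balancedness of the relevant $3\times 3$ subdiagram), and its row index goes \emph{up} to $\hat I(t')+1$; so the ``damage'' $i_t \le I(t) = \hat I(t)-1$ to the lower label is paired against slack created for a strictly smaller label, whose flag bound $i_{t'} \le i_t$ is the tighter one. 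Running this bookkeeping carefully shows that the system (\ref{eqn:flag2}) holds for $a$ and $(i_1,\dots,i_k)$ iff it holds for $\hat a$ and $(i_1,\dots,i_{k-1})$ together with $i_k \le a_k$, which by induction is iff (\ref{eqn:flag1}) holds for $\hat a$ and iff (\ref{eqn:flag1}) holds for $a$. The main obstacle is making this ``pairing'' argument airtight across all the sub-cases of Theorem~\ref{thm:sv_encoding_decoding}'s proof — in particular handling labels that move into or out of rows $a_k, a_k+1$ and verifying that the weak-monotonicity of the $i_t$'s is exactly what rescues the one direction where $I(t)$ can drop below $a_t$; everything else is routine transport of the inductive hypothesis.
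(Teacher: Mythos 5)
Your inductive skeleton (peel off the largest label $k$, reduce to $\hat a=(a_1,\dots,a_{k-1})$, transport the hypothesis through the row swap) can be made to work, but the step you yourself flag as ``the main obstacle'' is genuinely missing, and the mechanism you sketch for it --- pairing a label $t$ whose row index drops against a smaller label $t'$ in the box directly above, whose row index rises --- is not the right one. First, that box need not exist: if $t$ sits at $(a_k+1,c)$ in $D(\hat w)$, the inequality $\hat w(a_k)<\hat w(a_k+1)$ does not force $\hat w(a_k)>c$, so there may be nothing to pair with. Second, even when the pairing is available, knowing $i_{t'}\le i_t$ and that $I(t')$ increased gives no route to the inequality you actually need, namely $i_t\le a_k$ for every label $t$ lying in rows $a_k$ or $a_k+1$. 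The observation that closes your induction is much simpler and makes any pairing unnecessary: the label $k$ always lands in row $a_k$, so $I(k)=a_k$, and hence either side of the claimed equivalence forces $i_k\le a_k$; by monotonicity $i_t\le i_k\le a_k$ for \emph{all} $t$. Since every label in rows $a_k$ and $a_k+1$ has row index at least $a_k$ both before and after the swap, the constraints $i_t\le I(t)$ and $i_t\le\hat I(t)$ are automatically satisfied on those rows, and the two systems agree verbatim on all remaining labels. With that substitution your reduction ``system~(\ref{eqn:flag2}) for $a$ iff system~(\ref{eqn:flag2}) for $\hat a$ together with $i_k\le a_k$'' becomes a one-line verification and the induction goes through.

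For comparison, the paper avoids induction entirely and argues pointwise from $a_t=I(t)+R^+(t)-U^+(t)$ together with the monotonicity of the $i_t$: if $R^+(t)=0$ then $a_t\le I(t)$ directly, and otherwise one passes to the largest label $t'>t$ in the same row, which satisfies $R^+(t')=0$ and $I(t')=I(t)$, giving $i_t\le i_{t'}\le I(t')=I(t)$; dually, if $U^+(t)=d>0$ one passes to the minimal label $t'>t$ of the highest of the $d$ boxes above $\alpha_t$ in its column, for which $I(t')\le I(t)-d=a_t-R^+(t)\le a_t$. Both arguments ultimately rest on the same two ingredients (the decoding formula of Theorem~\ref{thm:sv_encoding_decoding} and the weak monotonicity $i_t\le i_{t'}$ for $t\le t'$), but the paper's transfer-to-a-larger-label device is shorter and dispenses entirely with the case analysis of how the row swap moves labels.
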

\begin{proof}
We have $a_t = I(t) + R^+(t) - U^+(t)$ for all $t$ by Theorem~\ref{thm:sv_encoding_decoding}. Suppose (\ref{eqn:flag1}) holds. We want to show $i_t \leq I(t)$.

If $R^+(t) = 0$, then $i_t\leq a_t = I(t) - U^+(t) \leq I(t)$. If $R^+(t) >0$, then let $t' > t$ be the largest label in row $I(k)$. Clearly $R^+(t')=0$, so $i_{t'} \leq I(t')$. Thus
$$i_t \leq i_{t'} \leq I(t') = I(t).$$
This completes one direction of the lemma.

Next, suppose (\ref{eqn:flag2}) holds. We have $i_t \leq I(t) = a_t - R^+(t) + U^+(t)$ and we want to show $i_t \leq a_t$. If $U^+(t) = 0$, then the proof follows immediately. Suppose $U^+(t) = d > 0$. Then there are $d$ boxes above $t$ in the same column, whose minimal label is larger than $t$. If $t'$ be the one in the highest row, then $I(t') \leq I(t) - d$. Therefore,
$$i_t \leq i_{t'} \leq I(t') \leq I(t) - d = a_t - R^+(t) \leq a_t.$$
\end{proof}


\section{Characterization of Diagrams via Content}

One unexpected application of balanced labellings is a nice characterization of affine permutation diagrams. We will introduce the notion of the \newword{content map} of an affine diagram, which generalizes the classical notion of content of a Young diagram. We will conclude that the existence of such map, along with the \newword{North-West property}, completely characterizes the affine permutation diagrams.

\bigskip

\subsection{Content Map}

Given an affine diagram $D$ of size $n$, the \newword{oriental labelling} of $D$ will denote the injective labelling of the diagram with numbers from $1$ to $n$ such that the numbers increases as we read the boxes in $[D]$ from top to bottom, and from right to left. See Figure \ref{fig:oriental}. (This reading order reminds us the traditional way to write and read a book in some East Asian countries such as Korea, China, or Japan, and hence the term ``oriental''.)

\begin{lemma}
The oriental labelling of an affine (or finite) diagram is a balanced labelling.
\end{lemma}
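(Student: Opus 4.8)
The plan is to verify the balanced condition directly at each box of the oriental labelling by analysing which entries are smaller and which are larger. Recall that in the oriental labelling $O$, the labels increase from top to bottom and, within each row, from right to left; so for any box $a = (i,j)$, the entries strictly above $a$ in its column all carry labels less than $O(a)$ (they are read earlier), and the entries strictly to the left of $a$ in its row also carry labels less than $O(a)$, while the entries strictly to the right of $a$ in its row and strictly below $a$ in its column carry larger labels. I would first record these monotonicity facts, as they are immediate from the definition of the reading order together with the periodicity $O(i,j) = O(i+n, j+n)$ and the finiteness of each row and column (so ``read from top to bottom, right to left'' is well-defined on $[D]$ and extends periodically).

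Next I would check balancedness at an arbitrary box $a = (i,j)$. Let $R^{\ge}_a$, $R^{>}_a$ be the boxes in the right-arm of $a$ with label $\ge O(a)$, $> O(a)$ respectively, and $B^{<}_a$ the boxes in the bottom-arm of $a$ with label $< O(a)$. By the monotonicity observation, $R^{\ge}_a = R^{>}_a$ equals the entire right-arm of $a$, and $B^{<}_a$ is empty: every box strictly below $a$ in its column has a larger label, not a smaller one. Hence the required injection $\phi_a \colon B^{<}_a \to R^{\ge}_a$ whose image contains $R^{>}_a$ is the injection with empty domain — which works precisely when $R^{>}_a$ is also empty. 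So the only thing to rule out is the existence of a box in the right-arm of $a$ with label strictly greater than $O(a)$. But that is impossible: within row $i$, all boxes to the right of $a$ are read before $a$ and therefore have labels strictly less than $O(a)$. Therefore $R^{>}_a = \emptyset$ as well, and the balanced condition at $a$ holds vacuously with $\phi_a$ the empty map. Since $a$ was arbitrary, every hook $H_{i,j}$ is balanced, so $O$ is a balanced labelling.

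The only subtlety — and the step I would treat most carefully — is checking that the reading order really does give the clean monotonicity I claimed, in particular that no box of the right-arm of $a$ can receive a label larger than $O(a)$, and symmetrically that no box of the bottom-arm can receive a label smaller than $O(a)$. This reduces to the elementary fact that the ``top-to-bottom, right-to-left'' total order on $[D(w)]$ has the property that $(i,j')$ with $j' > j$ in the same row precedes $(i,j)$, and $(i',j)$ with $i' > i$ in the same column follows $(i,j)$; combined with periodicity this pins down the comparison for every pair of boxes in a common row or column of $D$, not just in $[D]$. Once this is in place the argument above goes through verbatim for both finite and affine diagrams, since the oriental labelling, the hooks, and the balanced condition are all defined in terms of $[D]$ together with the periodicity relation $(i,j)\in D \Leftrightarrow (i+n,j+n)\in D$, and the lemma follows. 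I would also remark, as a sanity check, that this recovers the classical statement that the ``superstandard'' filling of a Young diagram (row-reading) is a balanced tableau.
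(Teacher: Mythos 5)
Your overall strategy --- verify the balanced condition hook by hook using the monotonicity of the reading order --- is the same as the paper's, which simply observes that every hook is already in rearranged form. But the execution has two problems. First, your proof is internally inconsistent about the reading order: you state that within a row the labels increase from right to left, then assert that boxes to the right of $a$ carry \emph{larger} labels (so that $R^{>}_a$ is the whole right-arm), and two sentences later assert the opposite (boxes to the right are read before $a$, hence smaller). Only the latter is compatible with your stated order. More importantly, the reading order itself is not the one the paper defines: the ``oriental'' order is the traditional East Asian book order --- columns taken from right to left, each column read top to bottom --- not row by row. This is pinned down by the paper's example, in which the box at $(1,1)$, lying in the leftmost occupied column, receives label $7$; a row-by-row reading would give it label $1$.

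The distinction is not cosmetic: it is exactly where the affine case lives, and it is the step you flagged as ``the only subtlety'' but did not actually verify. A box $(i',j)$ in the bottom-arm of $(i,j)\in[D]$ may have $i'>n$, so its label is that of its representative $(i'-kn,\,j-kn)$, which can sit in a row \emph{above} row $i$; under a row-major reading it is then read \emph{before} $(i,j)$ and receives a smaller label, giving $B^{<}_a\neq\emptyset$ while $R^{\geq}_a=\emptyset$, so the hook is unbalanced. This really happens even for affine permutation diagrams: for $w=[4,5,-3]\in\widetilde{\Sigma}_3$ the column of the box $(2,3)$ also contains $(4,3)$, whose representative is $(1,0)$ in row $1$. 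Under the correct column-major order that same representative lies in a column strictly to the \emph{left} of column $j$, hence is read later and receives a larger label; that single observation is what makes every bottom-arm label exceed the corner and every right-arm label fall below it, for affine as well as finite diagrams. Your argument does work for finite (Rothe) diagrams once the sign confusion is repaired, but as written it does not establish the affine case.
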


\begin{proof}
It is clear that every hook in the oriental labelling will stay the same after rearrangement.
\end{proof}

Now, suppose we start from an affine permutations and we construct the oriental labelling of the diagram of the permutation. For example, let $w = [2,6,1,4,3,7,8,5] \in \Sigma_8 \subset \widetilde{\Sigma}_8$. Figure \ref{fig:oriental} shows the oriental labelling of the diagram of $w$, where the box labelled by $7$ is at the (1,1)-coordinate.

\begin{figure}[b]
\begin{minipage}[b]{0.45\linewidth}
\centering
\includegraphics{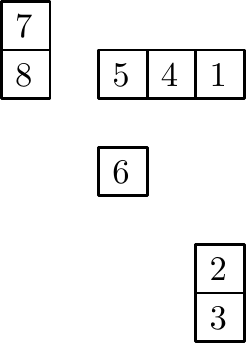}
\caption{oriental labelling of a finite diagram}
\label{fig:oriental}
\end{minipage}
\hspace{0.5cm}
\begin{minipage}[b]{0.45\linewidth}
\centering
\includegraphics{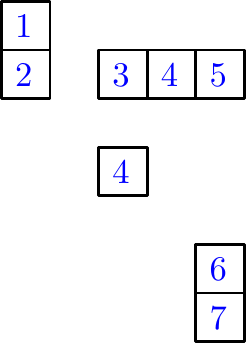}
\caption{$a_k$'s of the oriental labelling}
\label{fig:content}
\end{minipage}
\end{figure}

Following the spirit of Theorem \ref{thm:encoding_decoding}, for each box with label $k$ in the diagram, let us write down the integer $a_k$ where $a_k = I(k) + R^{+}(k) - U^{+}(k)$. Recall that $I(k)$ is the row index, $R^{+}(k)$ the number of entries greater than $k$ in the same row, $U^{+}(k)$ the number of entries greater than $k$ and located above $k$ in the same column. The formula is actually much simpler in the case of the oriental labelling, since $U^{+}(k)$ vanishes and $R^{+}(k)$ is simply the number of boxes to the left of the box labelled by $k$. Figure \ref{fig:content} illustrates the diagram filled with $a_k$ instead of $k$. From Theorem \ref{thm:encoding_decoding}, we already know that we can recover the affine permutation we started with by $a_k$'s. For example, $w = [2,6,1,4,3,7,8,5] = s_5 s_6 s_7 s_4 s_3 s_4 s_1 s_2$, where the right hand side comes from reading the Figure \ref{fig:content} ``orientally'' modulo $8$.

Motivated by this example, we define a special way of assigning integers to each box of a diagram, which will take a crucial role in the rest of this section.

\begin{definition}~\label{def:content}
Let $D$ be an affine diagram with period $n$. A map
$\mathcal{C}: D \rightarrow \ZZ$ is called a \emph{content} if it satisfies the following four conditions.
\begin{enumerate}
\item[(C1)] If boxes $b_1$ and $b_2$ are in the same row (respectively, column), $b_2$ being to the east (resp., south) to $b_1$, and there are no boxes between $b_1$ and $b_2$, then $\mathcal{C}(b_2)-\mathcal{C}(b_1) = 1$.

\item[(C2)] If $b_2$ is strictly to the southeast of $b_1$, then $\mathcal{C}(b_2) - \mathcal{C}(b_1) \geq 2$.

\item[(C3)] If $b_1 = (i,j)$ and $b_2 = (i+n, j+n)$ coordinate-wise, then $\mathcal{C}(b_2) - \mathcal{C}(b_1) = n$.

\item[(C4)] For each row (resp., column), the content of the leftmost (resp., topmost) box is equal to the row (resp., column) index.
\end{enumerate}
\end{definition}

\medskip


\begin{proposition}
Let $D$ be the diagram of an affine permutation $w \in \widetilde{\Sigma}_n$. Then, $D$ has a unique content map.
\end{proposition}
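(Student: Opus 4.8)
The plan is to exhibit an explicit content map and then argue uniqueness. For existence, I would use the oriental labelling described just above. Let $L$ be the oriental labelling of $D = D(w)$, and for the box $b\in[D]$ labelled $k$ set $\mathcal{C}(b) = a_k = I(k)+R^+(k)-U^+(k)$, extended to all of $D$ by periodicity via $\mathcal{C}(i+rn,j+rn) = \mathcal{C}(i,j)+rn$. As observed in the text, for the oriental labelling $U^+(k)=0$ and $R^+(k)$ is just the number of boxes strictly to the left of $b$ in its row, so $\mathcal{C}(b)$ equals the row index of $b$ plus the number of boxes to its left in the same row. From this closed form, (C1) for a horizontal pair is immediate (the left box has one fewer box to its left, same row index) and (C4) for rows is immediate (the leftmost box has nothing to its left). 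Condition (C3) is forced by the periodic extension, and (C4) for columns together with (C1) for vertical pairs will follow once I verify that along a column the content increases by exactly $1$ between consecutive boxes and that the topmost box of a column has content equal to its column index; both of these can be read off from Theorem~\ref{thm:encoding_decoding} applied to the oriental labelling — indeed the dual formula in Remark~\ref{rem:reverse_word} gives $a_k = J(k)+C^-(k)-L^-(k)$, and for the oriental labelling $L^-(k)=0$ (nothing smaller to the left) while $C^-(k)$ counts boxes above $b$ in its column, so $\mathcal{C}(b)$ also equals the column index plus the number of boxes above it. This dual description yields (C1) for columns and (C4) for columns directly. Finally (C2): if $b_2$ is strictly southeast of $b_1$, the column description gives $\mathcal{C}(b_2)\ge (\text{col index of }b_2) \ge (\text{col index of }b_1)+1$, and since $b_1$ is not the topmost box of its column-or-lies-weakly-above, a short case check using both descriptions bumps the gap to at least $2$; this uses that $D(w)$ has the North-West property (if $(i,j')$ and $(i',j)$ are in $D(w)$ with $i'>i$, $j'>j$, then so is $(i,j)$), which holds for permutation diagrams.

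For uniqueness, suppose $\mathcal{C}$ and $\mathcal{C}'$ are both content maps on $D(w)$. I would show they agree on $[D(w)]$ by an induction that propagates from the "anchors" fixed by (C4). Because $w$ is an affine permutation, $D(w)$ is connected in the sense that every box can be reached from a box that is leftmost in its row (or topmost in its column) by a sequence of unit horizontal and vertical steps within $D(w)$ — more carefully, the North-West property guarantees that from any box one may walk left along the row to its leftmost box, whose content is pinned to the row index by (C4), identically for $\mathcal{C}$ and $\mathcal{C}'$. Then (C1) propagates the equality $\mathcal{C}=\mathcal{C}'$ along each row, and (C3) propagates it across windows. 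Hence $\mathcal{C}=\mathcal{C}'$ everywhere.

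The main obstacle I expect is the verification of (C2) for the explicit map: unlike (C1), (C3), (C4), it is a genuine inequality relating two boxes that need not be in the same row, column, or window, so I cannot simply invoke a single one of the two closed forms for $\mathcal{C}$. The argument will require combining the "row-index plus boxes-to-the-left" and "column-index plus boxes-above" descriptions of $\mathcal{C}$ and exploiting the North-West property of $D(w)$ to rule out the borderline case where the gap would be exactly $1$. Once (C2) is settled, the rest is bookkeeping: the existence half is a direct translation of Theorem~\ref{thm:encoding_decoding} and Remark~\ref{rem:reverse_word}, and the uniqueness half is the connectivity-plus-propagation argument sketched above.
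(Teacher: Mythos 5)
Your construction is the same as the paper's: give $D(w)$ the oriental labelling, set $\mathcal{C}(b)=I(b)+R^+(b)-U^+(b)$, and use Theorem~\ref{thm:encoding_decoding} together with Remark~\ref{rem:reverse_word} to get the two closed forms ``row index plus boxes to the left'' and ``column index plus boxes above.'' Your uniqueness argument (anchor at the leftmost box of each row via (C4), propagate along the row via (C1), extend by (C3)) is exactly the paper's one-line argument, and your use of the dual formula to dispatch the vertical case of (C1) and the column case of (C4) is in fact slightly cleaner than the paper, which re-derives the vertical case of (C1) by counting dots of $w$ between the two rows.

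The genuine gap is (C2), and you have correctly identified it as the hard point but not closed it. Your sketched plan --- combine the two closed forms and invoke the North-West property to rule out a gap of exactly $1$ --- is not enough as stated: the NW property only constrains anti-diagonal (NE--SW) pairs of boxes, whereas $b_1$ and $b_2$ in (C2) form a NW--SE pair, so NW gives no information relating the boxes to the left of $b_1$ with those to the left of $b_2$ (and indeed a box above or to the left of $b_1$ need not have a counterpart for $b_2$, so the differences $L_1-L_2$ and $U_1-U_2$ can individually be positive). The paper instead uses the permutation structure of $D(w)$ directly: reduce to the case where the rectangle spanned by $b_1=(i_1,j_1)$ and $b_2=(i_2,j_2)$ contains no other boxes; then $(i_2,j_1)\notin D(w)$ forces the dot of column $j_1$ to lie in a row strictly between $i_1$ and $i_2$, so at most $i_2-i_1-2$ of those intermediate rows have their dot strictly to the left of column $j_1$; this bounds $R^+(b_1)-R^+(b_2)\leq i_2-i_1-2$, and since $I(b_2)-I(b_1)=i_2-i_1$ one gets $\mathcal{C}(b_2)-\mathcal{C}(b_1)\geq 2$. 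Some argument of this kind, using $w$ itself rather than only the NW property, is needed to complete your proof.
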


\begin{proof}
By the conditions (C1) and (C4), a content map is unique when it exists. As we have seen in Figure~\ref{fig:oriental} and Figure~\ref{fig:content}, give the oriental labelling to $D(w)$ and define $\mathcal{C}$ by $\mathcal{C}(b) := I(b) + R^{+}(b) - U^{+}(b)$ as before. In the case of the oriental labelling $R^{+}(b)$ is just the number of boxes to the left of $b$ and $U^{+}(b) = 0$. Thus
\begin{equation}\label{eqn:oriental}
\begin{split}
\mathcal{C}(b) &= (\text{row index of } b) + (\text{number of boxes to the left of } b)\\
&= (\text{column index of } b) + (\text{number of boxes above } b)
\end{split}
\end{equation}
where the second equality is from Remark~\ref{rem:reverse_word}.

(C1) is immediate for two horizontally consecutive boxes. Suppose two boxes $b_1$ and $b_2$ are in the same column, $b_2$ being to the south to $b_1$, and there are no boxes between $b_1$ and $b_2$. Let $i_1$ and $i_2$ be the row indexes of $b_1$ and $b_2$, and let $j$ be their column index. Since there are no boxes between $b_1$ and $b_2$, the dots (points corresponding to $w$) in row $i_1 + 1, i_1 + 2, \ldots, i_2 - 1$ are placed all to the left of the column $j$. These dots exactly correspond to the columns $k < j$ such that $(i_1, k)$ has a box but $(i_2, k)$ is empty. This implies that $R^{+}(b_1) - R^{+}(b_2) = i_2 - i_1 - 1$. We also have $I(b_2) - I(b_1) = i_2 - i_1$. Hence, $\mathcal{C}(b_2) - \mathcal{C}(b_1) = 1$.

For (C2), let $b_1 = (i_1, j_1)$, $b_2 = (i_2, j_2)$ be two boxes with $i_1 < i_2$ and $j_1 < j_2$, and our claim is that $\mathcal{C}(b_2) - \mathcal{C}(b_1) \geq 2$. We may assume that there are no boxes inside the rectangle $(i_1, j_1)$, $(i_1, j_2)$, $(i_2, j_1)$, $(i_2, j_2)$ since it suffices to show the claim for such pairs. Since there is no box at $(i_2, j_1)$ there must be a dot at column $j_1$ somewhere between $(i_1 + 1, j_1)$ and $(i_2 - 1, j_1)$. Hence, there are at most $i_2 - i_1 - 2$ dots to the left of column $j_1$ in rows $i_1 +1, i_1 + 2, \ldots, i_2 - 1$. This implies $R^{+}(b_1) - R^{+}(b_2) \leq i_2 - i_1 - 2$ and therefore $\mathcal{C}(b_2) - \mathcal{C}(b_1) \geq 2$.

(C3) and (C4) is clear from (\ref{eqn:oriental}).
\end{proof}

\bigskip

\subsection{Wiring Diagram and Classification of Permutation Diagrams}~

We start this section by recalling a well-known property of (affine) permutation diagrams.
\begin{definition}
An affine diagram is called \newword{North-West} (or \newword{NW)} if, whenever there is a box at $(i,j)$ and at $(k,\ell)$ with the condition $i<k$ and $j>\ell$, there is a box at $(i,\ell)$.
\end{definition}

It is easy to see that every affine permutation diagram is NW. In fact, if $(i, w^{-1}(j))$ and $(k, w^{-1}(\ell))$ is an inversion and $i<k$, $j>\ell$, then $(i, w^{-1}(\ell))$ is also an inversion since $i < k < w^{-1}(\ell)$ and $w(i) > j > \ell$. The main theorem of this section is that the content map and the NW property completely characterize the affine permutation diagrams. 

\begin{theorem}~\label{thm:diagram_classification}
An affine diagram is an affine permutation diagram if and only if it is NW and admits a content map.
\label{thm:diagchar}
\end{theorem}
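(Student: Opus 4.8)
The forward direction is already disposed of: the paragraph preceding the statement shows every affine permutation diagram is NW, and the Proposition of the previous subsection shows it admits a (unique) content map. So the plan is to prove the converse: given an affine diagram $D$ of period $n$ that is NW and carries a content map $\C$, I would construct an affine permutation $w \in \ASigma_n$ with $D(w) = D$. The guiding intuition is the ``wiring diagram'' advertised in the section title and in the introduction: the content should tell us, row by row, where the strands of the permutation are, exactly as in Postnikov's wiring diagram for Grassmannian permutations. Concretely, for each row $i$ I would like to read off which columns are occupied and, using (C1), recover a permutation by stipulating that the box contents in row $i$ are the values $w(k)$ for the various $k>i$ with $w(i)>w(k)$, suitably indexed.

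The main steps, in order, would be: (1) Use (C1) and (C4) to see that within a single row the contents of the boxes are a strictly increasing sequence of integers, determined by the set of occupied columns and the row index; symmetrically for columns. (2) Define a candidate bijection $w:\ZZ\to\ZZ$: for row $i$, let the occupied columns be $j_1 < j_2 < \cdots$; I would define $w(i)$ to be the content that ``belongs'' to position $i$ (the content value the leftmost box in row $i$ would have if extended one step left, i.e. $\C$ of the leftmost box minus one, which by (C4) is $i-1$ — so this needs care) and more to the point define the values $w(\cdot)$ at the positions strictly below $i$ that are inverted with $i$ to be the contents read off row $i$. The cleanest route is probably to define a map on positions directly from the content data and then check it is a well-defined bijection with $w(i+n)=w(i)+n$ and the correct sum condition; (C3) gives periodicity and (C4) pins down the normalization. (3) Verify $D(w)=D$: show $(i,j)\in D \iff i<w^{-1}(j)$ and $w(i)>j$. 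Here (C2) and the NW property are the workhorses — (C2) forces that a box strictly southeast of another box cannot be ``close,'' which is precisely the statement that strands, once crossed, do not re-cross, while NW fills in the rectangle so that the occupied region in each row really is of inversion-type. I expect the argument will proceed by induction on the size $\ell$ of $D$, peeling off a border-type box (a box whose removal, together with a row swap, reduces $\ell$ by one, mirroring Lemma \ref{lem:removing_border_cell}), checking that NW and the content map are inherited by the smaller diagram, and invoking the inductive hypothesis.

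The hard part will be step (3), and within it the inductive peeling: I need to identify, purely from the content map on an NW diagram, a box that plays the role of a border cell — a box $(i,j)$ that is the unique box in its row, sits at the bottom of its column locally, and whose content is exactly $i$ — and then show that after deleting it and swapping rows $i,i+1$ (periodically) the resulting diagram is still NW and still admits a content map (with the contents shifted appropriately in rows $i,i+1$). Verifying that such a box always exists in a nonempty NW diagram with a content map is the crux: I would argue by looking at a row achieving the minimum content among all ``last boxes of rows'' and using (C1), (C2), (C4) to show that box is isolated in its row and terminal in its column. Once the peeling lemma is established, everything else is bookkeeping: the base case $\ell=0$ is the identity, and the periodicity/sum conditions are forced by (C3) and (C4). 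A secondary technical point is making sure the content map produced on $D(w)$ by the construction agrees with the given $\C$, which follows from the uniqueness half of the previous Proposition once we know $D(w)=D$.
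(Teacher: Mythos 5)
The forward direction is fine, and your instinct to replace the paper's global construction with an inductive peeling in the spirit of Lemma~\ref{lem:removing_border_cell} is a legitimately different route. But the step you yourself identify as the crux is false as stated. You want to find, in any nonempty NW affine diagram with a content map, a box $(i,j)$ that is \emph{the unique box in its row}, terminal in its column, and of content exactly $i$. Take $w=[3,4,1,2]\in\Sigma_4$: then $D(w)=\{(1,1),(1,2),(2,1),(2,2)\}$ is a $2\times 2$ square, which is NW and carries the content map with values $1,2,2,3$, yet every row contains two boxes, so no box satisfies your description. The only border cell of this $w$ is $(2,1)=(2,w(3))$, which shares its row with $(2,2)$. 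In general a border cell $(i,w(i+1))$ is the bottom box of its column, but it need be neither the only nor the leftmost box of row $i$, and its content equals $i$ plus the number of boxes in row $i+1$ (e.g.\ the border cell $(1,3)$ of $[4,3,2,1]$ has content $3$). Your fallback heuristic (``row achieving the minimum content among all last boxes of rows'') also picks the wrong box here: it selects $(1,2)$, which is not a border cell, and peeling it increases rather than decreases the length. So the induction cannot get started without a correct intrinsic characterization of a border cell, which is precisely the nontrivial content of the converse.

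Two secondary points. First, your step (2) is not yet a definition of $w$: by (C4) the leftmost box of row $i$ always has content $i$, so ``content of the leftmost box minus one'' carries no information, and recovering $w(i)$ from row $i$ alone is circular (it requires counting the boxes in column $w(i)$, i.e.\ already knowing $w(i)$). Second, even with the right box identified, you would still owe the verification that deleting it and swapping rows $i,i+1$ periodically preserves both NW and the existence of a content map; note the new content map is not the restriction of $\mathcal{C}$, since (C4) forces the values in the two swapped rows to change. The paper avoids all of this with a single global construction: NE- and SW-rays indexed by the contents, crosses inside the boxes, horizontal/vertical connectors, and ``tunnels'' between boxes of equal content assemble into a wiring diagram in which each box is exactly one crossing of two strands; following the wires reads off $w$ directly and identifies the boxes with the inversions of $w$ in one stroke, with no induction. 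If you want to keep the inductive route, the missing lemma you must prove is the existence (read off from $\mathcal{C}$ and NW alone) of a genuine border cell, correctly characterized.
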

\medskip

In fact, given a NW affine diagram $D$ of period $n$ with a content map, we will introduce a combinatorial algorithm to recover the affine permutation $w \in \widetilde{\Sigma}_n$ corresponding to $D$. This will turn out to be a generalization of the \newword{wiring diagram} appeared in the section 19 of \cite{Postnikov2006}, which gave a bijection between Grassmannian permutations and the partitions.
\medskip

Let $D$ be a NW affine diagram of period $n$ with a content map. A northern edge of a box $b$ in $D$ will be called a \newword{N-boundary} of $D$ if
\begin{enumerate}[(1)]
\item $b$ is the northeast-most box among all the boxes with the same content and
\item there is no box above $b$ on the same column.
\end{enumerate}
Similarly, an eastern edge of a box $b$ in $D$ will be called a \newword{E-boundary} of $D$ if
\begin{enumerate}[(1)]
\item $b$ is the northeast-most box among all the boxes with the same content and
\item there is no box to the right of $b$ on the same row.
\end{enumerate}
A northern or eastern edge of a box in $D$ will be called a \newword{NE-boundary} if it is either a N-boundary or an E-boundary. We can define an \newword{S-boundary}, \newword{W-boundary}, and \newword{SW-boundary} in the same manner by replacing ``north'' by ``south'', ``east'' by ``west'', ``above'' by ``below'', ``right'' by ``left'', etc.
\medskip

Now, from the midpoint of each NE-boundary, we draw an infinite ray to NE-direction (red rays in Figure \ref{fig:boundary}) and index the ray ``$i$'' if it is a N-boundary of a box of content $i$, and ``$i+1$'' if it is an E-boundary of a box of content $i$. We call such rays \newword{NE-rays}. Similarly, a \newword{SW-ray} is an infinite ray from the midpoint of each SW-boundary to SW-direction (blue rays in Figure \ref{fig:boundary}), indexed ``$w_i$'' if it is a W-boundary of a box of content $i$, and ``$w_{i+1}$'' if it is a S-boundary of a box of content $i$.

\begin{figure}
\centering
\includegraphics{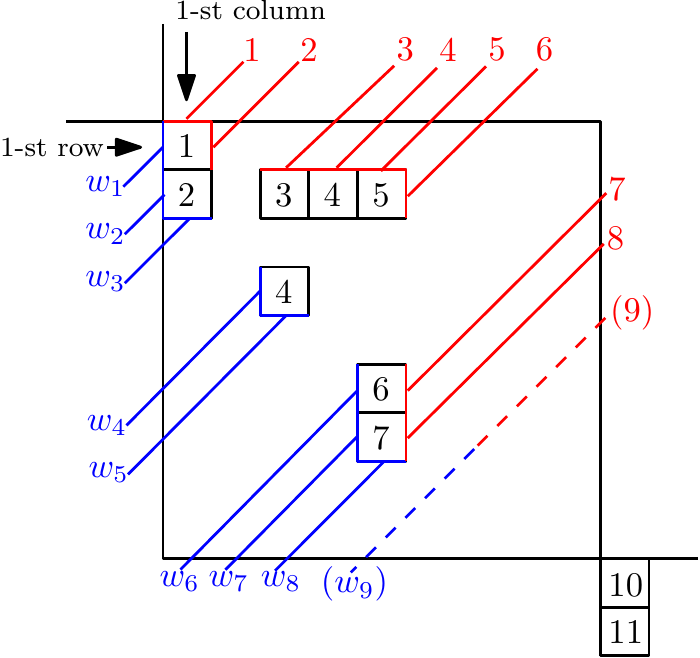}
\caption{content, (NE/SW-) boundaries, and rays}
\label{fig:boundary}
\end{figure}

\begin{lemma}
No two NE-rays (respectively, SW-rays) have the same index, and the indices increase as we read the rays from NW to SE direction.
\end{lemma}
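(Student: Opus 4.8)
The plan is to show two things separately: (a) distinctness of indices among NE-rays (and, by the obvious left-right symmetry, among SW-rays), and (b) monotonicity of the indices along the NW-to-SE sweep. For (a), suppose two NE-rays carried the same index $i$. Each NE-ray is attached to a box $b$ that is northeast-most among the boxes of its content, with no box directly above it (for an N-boundary) or directly to its right (for an E-boundary). So the possible sources of an index-$i$ ray are: an N-boundary of a content-$i$ box, or an E-boundary of a content-$(i-1)$ box. If two such rays come from the same content value, say both are N-boundaries of content-$i$ boxes $b_1\ne b_2$, then by (C1) and (C4) the content of the leftmost box in any row equals its row index, and contents increase by exactly $1$ along a row, so two boxes of the same content in the same row are impossible; if $b_1,b_2$ are in different rows, the NW property forces a box directly above the lower one on the shared column range (using that both are northeast-most of content $i$), contradicting condition (2) in the definition of N-boundary. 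The mixed case — an N-boundary of a content-$i$ box and an E-boundary of a content-$(i-1)$ box — is handled by the same kind of local geometry: (C2) says a strict southeast move changes content by at least $2$, so the content-$(i-1)$ box and the content-$i$ box cannot be in the strict-southeast relation, and then the NW property plus the northeast-most hypotheses pin down their relative position enough to produce the forbidden adjacent box. I expect this case analysis to be the main obstacle, since it is where one must combine (C1)--(C4) with the NW property rather than using them one at a time.

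For (b), I would set up a clean notion of the sweep: parametrize NE-rays by where they cross a generic anti-diagonal line $x+y = c$ for $c$ large, and read them in order of decreasing $x$ (equivalently, from NW to SE). Because each ray travels in the fixed NE direction $(-1,+1)$ with respect to our matrix coordinates, the order in which the rays meet such a line is the same for all sufficiently large $c$, so ``reading from NW to SE'' is well defined. Then I would show that if ray $\rho$ lies NW of ray $\rho'$ in this order, the index of $\rho$ is strictly less than the index of $\rho'$. The index of a ray is (content of its source box) or (content $+1$), and the source boxes of rays appearing earlier in the sweep sit weakly to the northwest; combining (C1) (content increases going east along a row, and by the second line of \eqref{eqn:oriental} also going south down a column) with (C2) (content jumps by $\ge 2$ under a strict SE move) shows the source contents are weakly increasing along the sweep, and the $\pm 1$ bookkeeping between N- and E-boundaries promotes this to a strict increase. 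Here one must check the one delicate configuration where an E-boundary of a content-$(i-1)$ box and an N-boundary of a content-$i$ box are ``adjacent'' in the sweep: their indices are both $i$, so strictness would fail — but part (a), already proved, rules this out, because it would mean two NE-rays with the same index.

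Finally, the SW-ray statement follows from the NE-ray statement by the symmetry exchanging north$\leftrightarrow$south, east$\leftrightarrow$west, and reversing the reading direction, exactly as the SW-boundaries were defined from the NE-boundaries; alternatively one can apply the NE result to the diagram obtained by the $180^\circ$ rotation, noting that a content map for $D$ induces one for the rotated diagram after the appropriate affine reindexing. I would present the NE case in full and dispatch the SW case with this symmetry remark. The real work, and the only place I anticipate friction, is the geometric case analysis in part (a); everything in part (b) is then essentially monotonicity bookkeeping on top of it.
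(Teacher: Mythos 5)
Your overall strategy coincides with the paper's: reduce the distinctness claim to the mixed case (an N-boundary of a content-$i$ box $b_2$ versus an E-boundary of a content-$(i-1)$ box $b_1$), use (C2) to rule out either box lying strictly southeast of the other, and then use the North-West property together with the northeast-most hypotheses to reach a contradiction. Your reduction to the mixed case is sound (indeed, it is even easier than you make it: by (C1) and (C2) the boxes of a fixed content are totally ordered in the NE--SW direction, so there is at most one N-boundary and at most one E-boundary per content value), your anti-diagonal-sweep formalization of monotonicity is a reasonable elaboration of what the paper simply calls clear from (C1) and (C2), and the transposition/symmetry dispatch of the SW-ray statement matches the paper.

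The genuine gap is exactly where you anticipate friction: the mixed-case analysis is asserted, not carried out, and your one-sentence summary of how it ends (``the NW property \ldots pin[s] down their relative position enough to produce the forbidden adjacent box'') does not actually cover all the sub-cases. After (C2) excludes the strict-SE relation, the paper first proves the dichotomy that $b_1$ and $b_2$ must share a row or a column, by killing both strict-NE configurations separately. When $b_1$ is strictly northeast of $b_2$, the NW property does produce a box directly above $b_2$, contradicting the N-boundary condition --- this is your ``forbidden adjacent box.'' But when $b_2$ is strictly northeast of $b_1$, no forbidden adjacent box appears next to either boundary; instead the NW property yields a box $b_3$ above $b_1$ in the row of $b_2$, the column version of (C1) forces $\mathcal{C}(b_3) < i-1$, and then (C1) applied along that row locates \emph{another} box of content $i-1$ strictly northeast of $b_1$, contradicting that $b_1$ is the northeast-most box of its content. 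That chain-of-contents argument is a different kind of contradiction from the one your sketch names, and it is the step that genuinely combines (C1) with the NW property. Finally, the dichotomy's conclusion still needs its own (easy) endgame: if $b_1,b_2$ share a row then $b_1$ cannot have an E-boundary, and if they share a column then $b_2$ cannot have an N-boundary. Supplying these three sub-arguments explicitly would close the gap.
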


\begin{proof}
If two NE-rays have the same index $i$, then it must be the case in which one ray is an E-boundary of a box $b_1$ with content $i-1$ and the other ray is an N-boundary of a box $b_2$ with content $i$. Our claim is that two boxes $b_1$ and $b_2$ should be in the same row or in the same column.

If one of the box is strictly to the southeast of the other, than it contradicts (C2). Thus one of the box should be strictly to the northeast of the other. If $b_1$ is to the northeast of $b_2$, then there must be a box $b_3$ above $b_2$ in the same row of $b_1$ by the NW condition and this contradicts that $b_2$ has N-boundary. On the other hand, if $b_2$ is to the northeast of $b_1$, then there is a box $b_3$ above $b_1$ in the same row of $b_2$ and the content of $b_3$ is less than $i-1$. This implies that there is a box with content $i-1$ between $b_3$ and $b_2$. This contradicts the fact that $b_1$ is the northeast-most box among all the boxes with content $i-1$.

We showed that $b_1$ and $b_2$ should be in the same row or in the same column. However, if they are in the same row then $b_1$ cannot have an E-boundary and if in the same column then $b_2$ cannot have an N-boundary. Hence, no two NE-rays can have the same index.

Finally, it is clear from (C1) and (C2) that the indices increase as we read the rays from NW to SE direction. The transposed version of the above argument will work for SW-rays.
\end{proof}
\medskip

\begin{lemma}~\label{lem:wirediag_fixedpoint}
There is no NE-ray of index $k$ if and only if there is no SW-ray of index $w_k$.
\end{lemma}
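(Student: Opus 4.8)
The plan is to reformulate both sides of the biconditional in terms of the content map and the box structure, and to exhibit a symmetry. First I would unwind what "there is no NE-ray of index $k$" means. An index-$k$ NE-ray arises either as an N-boundary of a box of content $k$ (with no box directly above it) or as an E-boundary of a box of content $k-1$ (with no box directly to its right); in either case the relevant box must be the northeast-most box of its content class. So "no NE-ray of index $k$" says: among the boxes of content $k$, the northeast-most one $b$ (if any) has a box above it in its column, AND among the boxes of content $k-1$, the northeast-most one $b'$ (if any) has a box to its right in its row — and also accounts for the degenerate cases where there are no boxes of content $k$ or $k-1$ at all. Dually, "no SW-ray of index $w_k$" says the analogous thing with "southwest-most", "below", "left", replacing everything. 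By (C3) it suffices to analyze contents $k$ modulo $n$, so we are really looking at finitely many content values.

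The key step is to translate the absence of boundaries into a single clean statement. Using (C1), boxes of content $k$ in a fixed row or column form a contiguous segment shifted by the row/column index, and by (C2) two boxes of the same content cannot lie strictly SE of one another; so the content-$k$ boxes (within one fundamental period) form an "anti-chain staircase" whose northeast-most and southwest-most members are well-defined. I would show that "$b$ = northeast-most box of content $k$ has a box above it" is equivalent to "$b$ is not the topmost box of its column", and then use the second line of \eqref{eqn:oriental}-type reasoning plus (C4): the topmost box of a column has content equal to the column index, so the failure of an N-ray at $b$ ties the content $k$ to the existence of a box of content $k$ one row higher, i.e. to content $k$ appearing in an adjacent column position. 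The honest content of the lemma is that the "NE-rays present" and "SW-rays present" describe, respectively, the positions at which a certain staircase path turns, and a ray of index $k$ is missing exactly when the path goes "straight through" at level $k$; missing NE-ray of index $k$ then forces the content value $k$ to be (in a precise sense) squeezed between consecutive boxes, and this squeezing condition is manifestly invariant under the $180^\circ$ central symmetry that swaps NE-boundaries with SW-boundaries and sends content-$k$ data to content-$(\text{something})$ data — here I would check carefully that the index relabelling $i \leftrightarrow w_i$ is exactly the one induced by that symmetry, so that missing NE-ray of index $k$ corresponds precisely to missing SW-ray of index $w_k$.

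The main obstacle I anticipate is bookkeeping the two "or" cases (N-boundary of content $k$ versus E-boundary of content $k-1$) simultaneously and showing they cannot partially fail — i.e. that one of the two contributing rays can be absent without the other being absent, which would break the clean biconditional. I expect to resolve this by the NW property: if, say, the E-boundary of the northeast-most content-$(k-1)$ box is blocked by a box to its right, that right-neighbor has content $k$ by (C1), and NW then forces enough structure to conclude the N-boundary of the northeast-most content-$k$ box is also blocked, and conversely. In other words the two conditions are not independent; they are linked precisely by the NW condition, which is the crux. Once that linkage is established, the central-symmetry argument gives the equivalence with the SW side with no further work, since NW is self-dual under $180^\circ$ rotation together with the content conditions (C1)--(C4), which are likewise symmetric under that rotation once (C4)'s "leftmost/topmost" is swapped with "rightmost/bottommost" — a swap that is exactly compatible with the $i \mapsto w_i$ relabelling.
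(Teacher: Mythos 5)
There is a genuine gap, in two places. First, the ``linkage'' you propose between the two sub-cases is false. You want to show that the N-boundary of content $k$ and the E-boundary of content $k-1$ are blocked or unblocked together, so that the two disjuncts defining ``NE-ray of index $k$'' stand or fall simultaneously. In fact they are anti-correlated: whenever a box of content $k$ exists, \emph{exactly one} of the two boundaries exists (that at most one exists is essentially the preceding lemma on distinctness of ray indices; that at least one exists is the heart of the present proof). Concretely, for $w=[2,6,1,4,3,7,8,5]$ the box $(2,1)$ of content $2$ has boxes to its right, so there is no E-boundary of content $2$, yet $(2,3)$ is the northeast-most box of content $3$ and has nothing above it, so the N-boundary of content $3$ is present; this refutes your claimed implication with $k=3$. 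The statement you actually need --- and which the paper proves --- is that \emph{both} boundaries are absent if and only if there are no boxes of content $k$ or $k-1$ at all; the ``degenerate case'' you set aside is the \emph{only} case in which the ray is missing. The argument is short: if a box of content $k$ exists, take the northeast-most one $b$; either $b$ has nothing above it (N-boundary), or the box above it has content $k-1$ by (C1), and then the northeast-most box of content $k-1$ can have no box to its right, since that neighbour would be a content-$k$ box strictly northeast of $b$; so an E-boundary of content $k-1$ appears.

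Second, the $180^\circ$ central symmetry you invoke to pass from the NE side to the SW side does not exist. The NW property is not preserved by $180^\circ$ rotation (it becomes the SE property), and (C4) genuinely fails for rightmost/bottommost boxes: the rightmost box of row $2$ in the example above has content $5$, which is not determined by the row index. The symmetry that is actually available is transposition across the main diagonal, under which NW is self-dual, (C1)--(C4) are preserved, and N-/E-boundaries go to W-/S-boundaries; this is what the paper uses to dualize its argument. But transposition sends $D$ to a different diagram, so by itself it cannot relate the NE-rays and SW-rays of the \emph{same} $D$; you still need the intermediate characterization ``no boxes of content $k$ or $k-1$,'' which is manifestly transpose-invariant, as the bridge between the two sides of the biconditional.
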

\begin{proof}
We will show that the followings are equivalent.
\begin{enumerate}[(1)]
\item
There is no N-boundary with content $k$ and no E-boundary of content $k-1$.
\item
There is no S-boundary with content $k-1$ and no W-boundary with content $k$
\item
There are no boxes with content $k$ or $k-1$.
\end{enumerate}

It is clear that (3) implies the other two. For (1)$\Rightarrow$(3), suppose there is at least one box with content $k$. Then, take the NE-most box $b$ with content $k$ and by the assumption there must be a box above $b$ with content $k-1$. Then, take the NE-most box $c$ with content $k-1$. By construction, this box $c$ cannot have a box to its right so the eastern edge of $c$ is an E-boundary, which is a contradiction. Similar argument shows that there are no box with content $k-1$.

The transposed version of the above argument shows (2)$\Rightarrow$(3).
\end{proof}
\medskip

\begin{figure}
\centering
\includegraphics{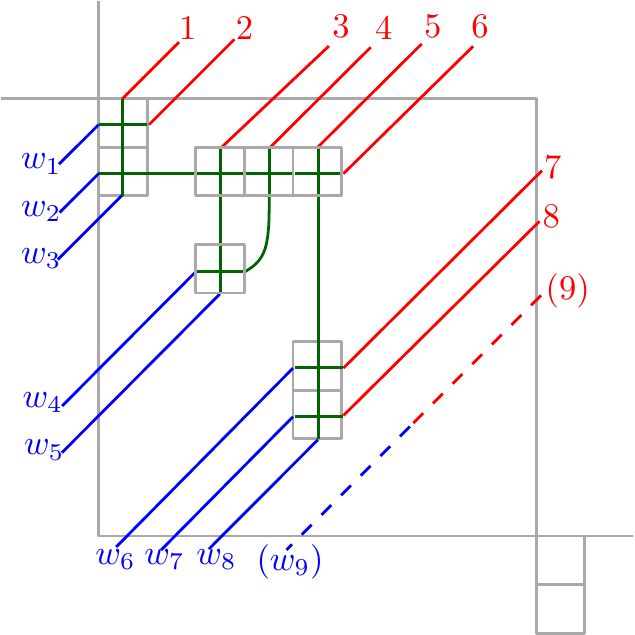}
\caption{wiring diagram}
\label{fig:wirediag}
\end{figure}

Now, given a NW affine diagram $D$ with a content map, we construct the \newword{wiring diagram} of $D$ through the following procedure.

\begin{enumerate}[(a)]
\item (Rays) Draw NE- and SW-rays.

\item (The ``Crosses'') Draw a ``$+$'' sign inside each box, i.e., connect the midpoint of the western edge to the midpoint of the eastern edge, and the midpoint of the northern edge to the midpoint of the southern edge of each box.

\item (Horizontal Movement) If the box $a$ and the box $b$ are in the same row ($a$ is to the left of $b$) and there are no boxes between them, then connect the midpoint of the eastern edge of $a$ to the midpoint of the western edge of $b$.

\item (Vertical Movement) If the box $a$ and the box $b$ are in the same column ($a$ is above $b$) and there are no boxes between them, then connect the midpoint of the southern edge of $a$ to the midpoint of the northern edge of $b$.

\item (The ``Tunnels'') Suppose that the box $a$ of content $k$ is not the northeast-most box among all the boxes with content $k$ and that there is no box on the same row to the right of $a$. Let $b$ be the closest box to $a$ such that it is to the northeast of $b$ and has content $k$. For every such pair $a$ and $b$, connect the midpoint of the eastern edge of $a$ to the midpoint of the southern edge of $b$.
\end{enumerate}

\begin{lemma}~\label{lem:wirediag_midpoint}
Each midpoint of an edge of a box in $D$ is connected to exactly two line segments of \textnormal{(a)}, \textnormal{(b)}, \textnormal{(c)}, \textnormal{(d)}, and \textnormal{(e)}. 
\end{lemma}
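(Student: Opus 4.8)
The plan is to verify the claim by a careful local case analysis at each midpoint, organized by which of the four sides of a box the midpoint sits on. Fix a box $b$ in $D$ with content $k$ and consider each of its four edge-midpoints in turn. The cross construction (b) always contributes exactly one segment to every one of the four midpoints, so in every case it remains to show that exactly one of the constructions (a), (c), (d), (e) contributes a second segment and that no two of them contribute simultaneously.

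First I would treat the western midpoint of $b$. Either there is a box $a$ immediately to the left of $b$ in the same row, or there is not. In the first case, construction (d)\dots\ no---(c) (Horizontal Movement) attaches the segment from $a$, and none of (a), (d), (e) can attach to a western midpoint (the rays (a) only emanate from N-, E-, S-, W-\emph{boundaries}, and the ``tunnels'' (e) attach to eastern and southern midpoints only). In the second case, $b$ is the leftmost box of a maximal horizontal run, so its western edge is a W-boundary precisely when $b$ is the northeast-most box of content $k$; by Lemma~\ref{lem:wirediag_fixedpoint} and the boundary definitions, exactly one of two things happens: either $b$ is that northeast-most box and a SW-ray of index $w_k$ emanates from the western midpoint via (a), or $b$ is not, in which case $b$ is the ``$a$'' of some tunnel pair in (e) \emph{with roles reversed}---here I would instead observe that the midpoint that tunnels feed into is a \emph{southern} midpoint of the box $b$ in (e), so I must check the W-midpoint separately: when $b$ is not the NE-most box of content $k$ and has no left neighbor, there is (by the NW property and C1/C2, as in the proof of Lemma~\ref{lem:wirediag_fixedpoint}) a box of content $k$ strictly to the northeast, hence by (C1) a box directly above $b$ sharing... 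I would use the content conditions to locate a box immediately west or the appropriate boundary, so that exactly one of (a), (c) fires. The symmetric arguments handle the northern midpoint (with (d) Vertical Movement and N/S-boundary rays), the eastern midpoint (with (c), (e), and E-boundary rays), and the southern midpoint (with (d), (e), and S-boundary rays).

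The combinatorial heart is the eastern and southern midpoints, where the ``tunnel'' construction (e) interacts with the ray construction (a). For the eastern midpoint of $b$: if $b$ has a box to its right, (c) fires and I must rule out (a) and (e)---(a) is ruled out because the eastern edge is then not an E-boundary, and (e) is ruled out because the ``$a$'' in a tunnel pair has no box to its right. If $b$ has no box to its right, then either $b$ is the northeast-most box of content $k$, in which case its eastern edge is an E-boundary and exactly one NE-ray of index $k+1$ emanates via (a) (and (e) cannot fire since $b$ is northeast-most), or $b$ is not northeast-most, in which case exactly one tunnel pair $(a,b')=(b, b')$ is formed in (e) and (a) cannot fire. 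The southern midpoint is handled symmetrically, exchanging the roles of (c)$\leftrightarrow$(d) and using that the tunnel in (e) connects an eastern midpoint of $a$ to a southern midpoint of $b'$; here I must confirm that each box is the ``$b'$'' endpoint of at most one tunnel, which follows from the word ``closest'' in the definition of (e) together with (C2) forcing tunneled boxes to lie in the same row or column pattern.

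The main obstacle I anticipate is the bookkeeping in the non-northeast-most cases: showing that when $b$ is not the northeast-most box of content $k$ and lacks a neighbor on the relevant side, there is \emph{exactly one} closest box of content $k$ to its northeast, so that the tunnel is well-defined and unique. This uses (C2) (strict southeast jumps content by $\geq 2$, so two boxes of equal content are never strictly comparable in the southeast order) together with the NW property to pin down that such a box lies directly north or directly east in a controlled way---essentially the argument already run in the proof of Lemma~\ref{lem:wirediag_fixedpoint}. Once that uniqueness is in hand, the rest is a finite check over the four midpoint types, and I would present it as such, invoking (C1)--(C4), the NW property, and Lemmas~\ref{lem:wirediag_fixedpoint} and the index-monotonicity lemma as needed.
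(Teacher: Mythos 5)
Your overall strategy --- one segment at every midpoint from the cross (b), plus exactly one of (a), (c), (d), (e) --- is the same as the paper's, whose own proof is only a two-sentence observation (rays appear exactly where horizontal/vertical movement is impossible, and tunnels pick up what remains). Your treatment of the eastern midpoint is essentially correct. But there is a concrete error in your western-midpoint case that prevents it from closing, and it propagates to the northern midpoint by symmetry. The W-boundary is obtained from the E-boundary by swapping north/south, east/west, above/below, right/left, so the western edge of $b$ is a W-boundary when $b$ is the \emph{southwest}-most box of its content with no box to its left --- not the northeast-most, as you wrote. With the wrong condition the dichotomy at the western midpoint does not close, and even with the correct one you still owe the fact that makes the case work: if $b$ has no box to its left in its row, then $b$ is \emph{automatically} the SW-most box of its content. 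This follows from (C4) together with the NW property (a box of the same content strictly to the southwest of $b$ would force, by NW-ness, a box in $b$'s row to its left). Dually, ``no box above $b$'' forces $b$ to be the NE-most box of its content, so the northern midpoint always receives either a segment from (d) or an N-boundary ray. Your sketch trails off exactly where this argument is needed (``I would use the content conditions to locate a box immediately west or the appropriate boundary'').

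The second substantive point is the matching of tunnel sources to tunnel targets. Since by (C1) no two boxes of content $k$ share a row or column and by (C2) none is strictly southeast of another, they form a chain $b_1,\dots,b_m$ from NE-most to SW-most; hence ``closest'' in (e) means the immediate predecessor and at most one tunnel lands on each southern edge, as you note. What you defer --- and what is really the heart of the lemma at the southern midpoint --- is the existence half: one must show that $b_i$ has a box below it if and only if $b_{i+1}$ has a box to its right, so that a southern midpoint with no box below and not SW-most actually receives a tunnel (rather than being left at degree one), and so that a tunnel never lands on a midpoint already served by (d). This equivalence is true and provable from (C1), (C2) and NW-ness: e.g.\ if $b_{i+1}=(r,c)$ has a neighbour $(r,d)$ of content $k+1$ to its right, then $d>c'$ violates (C2), while $d<c'$ produces, via NW-ness and the stepping of contents along a row and column, a content-$k$ box strictly between $b_i$ and $b_{i+1}$, a contradiction; hence $d=c'$ and $b_i=(r',c')$ has a box below it. The paper is equally silent on this verification, but your proposal attributes the difficulty to uniqueness of the ``closest'' box, which is the easy half; the existence half is where the work lies.
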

\begin{proof}
Note that NE- and SW-rays are drawn only when the horizontal/vertical movement is impossible at that midpoint. After one draws rays, crosses, horizontal/vertical lines, the remaining midpoints are connected by tunnels. 
\end{proof}

Figure \ref{fig:wirediag} illustrates the wiring diagram of the affine diagram of period $9$ in Figure \ref{fig:content}. Note that the curved line connecting two boxes of content $4$ is a ``tunnel''. Once we draw this wiring diagram of a NW affine diagram with a content, it is very easy to recover the affine permutation corresponding to the diagram. From a NE-ray indexed by $i$, proceed to the southwest direction following the lines in the wiring diagram until we meet a SW-ray of index $w_j$. This translates to $w_j = i$ in the corresponding affine permutation. If there is no NE-ray of index $i$ (equivalently, no SW-ray of index $w_i$), then let $w_i = i$.  For instance, Figure \ref{fig:wirediag} corresponds to the affine permutation $w = [w_1, w_2, \ldots, w_9] = [2,6,1,4,3,7,8,5,9] \in \Sigma_9 \subset \widetilde{\Sigma}_9$

\begin{proposition}~\label{prop:wirediag}
The wiring diagram gives a bijection between the NW affine diagrams of period $n$ with a content map, and the affine permutations in $\widetilde{\Sigma}_n$.
\end{proposition}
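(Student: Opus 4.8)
The plan is to show that the operation $D\mapsto\Phi(D)$ which reads an affine permutation off the wiring diagram, and the operation $w\mapsto D(w)$, are mutually inverse bijections between the two sets. It is shown above that $D(w)$ is a NW affine diagram of period $n$ carrying a unique content map, so $w\mapsto D(w)$ lands in the set in question, and it is injective because an affine permutation is determined by its set of inversions. Hence it suffices to prove: (A) for every NW affine diagram $D$ of period $n$ with a content map, $\Phi(D)$ is a well-defined element of $\widetilde{\Sigma}_n$; (B) $\Phi(D(w))=w$ for all $w\in\widetilde{\Sigma}_n$; and (C) $D(\Phi(D))=D$ for all such $D$.

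\textbf{Step A.} Form the wiring diagram $W=W(D)$, i.e.\ the union of all NE- and SW-rays together with the segments (b)--(e). By Lemma~\ref{lem:wirediag_midpoint} every midpoint of an edge of a box of $D$ lies on exactly two of these pieces, while the only other endpoints of pieces are the tails of rays; so $W$ is a disjoint union of simple arcs and simple closed curves, the non-ray ends of the arcs being exactly the ray-tails. For a point of $W$ lying on the boundary of, or on the ``$+$'' inside, a box $(i,c)$, set $\psi:=c-i$. Traversing a wire in the ``northeast'' sense, $\psi$ is \emph{strictly increasing}: the column grows along horizontal moves and along the horizontal strand of a ``$+$''; the row decreases along vertical moves and along the vertical strand of a ``$+$''; both happen along a tunnel, which joins a box to the nearest box to its northeast of the same content; and along a NE-ray $\psi$ grows while along a SW-ray it drops; moreover these local directions fit together consistently at every midpoint. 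Strict monotonicity of $\psi$ excludes closed curves, and also excludes an arc with two NE-ends or two SW-ends (at a NE-end $\psi$ increases outward and at a SW-end it decreases outward); therefore \emph{every wire is an arc joining exactly one NE-ray to exactly one SW-ray}. Thus the wires pair NE-rays with SW-rays bijectively, and together with Lemma~\ref{lem:wirediag_fixedpoint} and the convention $w_i:=i$ when the relevant rays are absent this exhibits $\Phi(D)=w$ as a bijection $\ZZ\to\ZZ$; condition (C3) makes translation by $(n,n)$ a symmetry of $W$ sending the NE-ray of index $k$ to that of index $k+n$ and the SW-ray of index $w_i$ to that of index $w_{i+n}$, so $w(i+n)=w(i)+n$. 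It remains to see that $\sum_{i=1}^n w(i)=\binom{n+1}{2}$: for any content map, (C1) and (C4) force $\mathcal{C}(b)=\bigl(\text{row of }b\bigr)+\#\{\text{boxes of that row strictly left of }b\}=\bigl(\text{column of }b\bigr)+\#\{\text{boxes of that column strictly above }b\}$ for every box $b$, and once Step C gives $D(\Phi(D))=D$ the existence of this ``calibration to the true row and column indices'' is (by a routine check) enough to force the window-sum condition on $\Phi(D)$. Hence $\Phi(D)\in\widetilde{\Sigma}_n$. Carrying out the case analysis behind the strict monotonicity of $\psi$, and propagating the ray indices through it, is the main obstacle.

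\textbf{Step B.} Use the explicit content map $\mathcal{C}(b)=\bigl(\text{row of }b\bigr)+\#\{\text{boxes strictly left of }b\}$ of $D(w)$ from equation (\ref{eqn:oriental}). With this formula one computes the N-, E-, S-, W-boundaries of $D(w)$ and the indices of the associated rays directly, and the effect of following a wire across a ``$+$'', a horizontal or vertical move, or a tunnel turns out to be exactly the effect of one of the simple transpositions that build $w$ up from the identity in window notation --- exactly as in the proof of Theorem~\ref{thm:encoding_decoding}, whose prototype is the content map of the oriental labelling. Tracking the indices shows that the wire issuing from the NE-ray of index $k$ arrives at the SW-ray of index $w_k$, i.e.\ $\Phi(D(w))=w$. \textbf{Step C.} Put $w=\Phi(D)$. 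The boxes of $D(w)$ are the pairs $(i,w(j))$ with $i<j$ and $w(i)>w(j)$, and reading off $W$ these are precisely the points where the wire carrying the value $w(i)$ --- by construction of $\Phi$ the one running through row $i$ --- crosses the wire carrying the value $w(j)$; the local recipe for $W$ shows that each box of $D$ is such a crossing, in its own row and column, and conversely that each such crossing is a box of $D$, so $D(\Phi(D))=D$. Together with Steps A and B this proves that $\Phi$ and $w\mapsto D(w)$ are mutually inverse bijections between the NW affine diagrams of period $n$ with a content map and $\widetilde{\Sigma}_n$; the verifications in Steps B and C are bookkeeping against the content formula and the argument behind Theorem~\ref{thm:encoding_decoding}.
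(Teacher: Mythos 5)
Your proposal is correct and follows essentially the same route as the paper: construct the permutation by tracing wires from NE-rays to SW-rays, identify the boxes of $D$ with the crossings of pairs of wires and hence with the inversions of $\Phi(D)$ located at the correct coordinates, conclude $D=D(\Phi(D))$, and combine this with the injectivity of $w\mapsto D(w)$ to obtain the bijection. Your strictly monotone potential $\psi$ along the wires makes explicit why no wire is a closed loop and why each arc joins exactly one NE-ray to exactly one SW-ray (a point the paper leaves implicit behind Lemma~\ref{lem:wirediag_midpoint}), and your attention to the normalization $\sum_{i=1}^n w(i)=n(n+1)/2$ via condition (C4) addresses a detail the paper's proof passes over; the remaining bookkeeping in your Steps B and C is at the same level of detail as the published argument.
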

\begin{proof}
Let $D$ be an NW affine diagram of period $n$ with a content map and suppose we drew a wiring diagram on $D$ by the above rules. For every $k$ not appearing in the indices of NE-rays, draw a ``fixed point'' ray from northeast to southwest using Lemma \ref{lem:wirediag_fixedpoint} with NE index $k$ and SW index $w_k$ (see Figure \ref{fig:wirediag}, $w_9 =9$.) Now the indices of the NE- and SW-rays will cover all the integers, and there is a one-to-one correspondence between indices of NE-rays and SW-rays following the wires (Lemma \ref{lem:wirediag_midpoint}). Let $f(a) = b$ if the NE-ray $b$ corresponds to the SW-ray $w_a$ following the wires. We will show that $w = (f(i))_{i\in \ZZ}$ is the affine permutation corresponding to $D$.

Consider two wires corresponding to SW-rays $w_i$ and $w_j$, $i < j$. It is easy to see that two wires intersect at most once, and the crosses inside the boxes exactly correspond to these intersections. This implies the two wires intersect if and only if $(i,j)$ is an inversion, and each box corresponds to these inversions. Moreover, the SW-ray $w_i$ must enter into a W-boundary of a box with content $i$ and the NE-ray $f(j)$ should come out from a N-boundary of a box with content $f(j)$. Hence the intersection should occur in the box with coordinate $(i, f(j))$. This concludes that the diagram $D$ is indeed a diagram of an affine permutation $w$.
\end{proof}

Our main result of this section, Theorem~\ref{thm:diagchar}, is a direct consequence of Proposition~\ref{prop:wirediag}.

\bigskip

\begin{acknowledgement}
\label{sec:ack}
We thank Sara Billey, Thomas Lam, and Richard Stanley for helpful discussions. 
\end{acknowledgement}

\bibliographystyle{plain}
\bibliography{thebibliography}

\begin{thebibliography}{10}

\bibitem{Buch2002}
Anders~S Buch.
\newblock {A Littlewood-Richardson rule for the K-theory of Grassmannians}.
\newblock {\em Acta mathematica}, 189(1):37--78, 2002.

\bibitem{Edelman1987}
Paul Edelman and Curtis Greene.
\newblock {Balanced tableaux}.
\newblock {\em Advances in Mathematics}, 99:42--99, 1987.

\bibitem{Fomin1997}
Sergey Fomin, Curtis Greene, Victor Reiner, and Mark Shimozono.
\newblock {Balanced labellings and Schubert polynomials}.
\newblock {\em European J. Combin}, pages 1--23, 1997.

\bibitem{Fomin1993}
Sergey Fomin and Anatol~N Kirillov.
\newblock {Yang-Baxter equation, symmetric functions and Grothendieck
  polynomials}.
\newblock {\em ArXiv preprint hep-th/9306005}, pages 1--25, 1993.

\bibitem{Lam2006}
Thomas Lam.
\newblock {Affine Stanley symmetric functions}.
\newblock {\em American Journal of Mathematics}, 128(6):1553--1586, 2006.

\bibitem{Lam2008}
Thomas Lam.
\newblock {Schubert polynomials for the affine Grassmannian}.
\newblock {\em Journal of the American Mathematical Society}, 21(1):259--281,
  2008.

\bibitem{Lam2010}
Thomas Lam.
\newblock {Stanley symmetric functions and Peterson algebras}.
\newblock {\em Arxiv preprint arXiv:1007.2871}, pages 1--29, 2010.

\bibitem{Lam2009}
Thomas Lam, Anne Schilling, and Mark Shimozono.
\newblock {K-theory Schubert calculus of the affine Grassmannian}.
\newblock {\em Arxiv preprint arXiv:0901.1506}, pages 1--38, 2009.

\bibitem{Lascoux1983}
Alain Lascoux and Marcel-Paul Sch{\"u}tzenberger.
\newblock Symmetry and flag manifolds.
\newblock In {\em Invariant theory}, pages 118--144. Springer, 1983.

\bibitem{Lascoux1985}
Alain Lascoux and Marcel-Paul Sch\"{u}tzenberger.
\newblock {Schubert polynomials and the Littlewood-Richardson rule}.
\newblock {\em Letters in Mathematical Physics}, 10(2-3):111--124, 1985.

\bibitem{Morse2012}
Jennifer Morse.
\newblock {Combinatorics of the K-theory of affine Grassmannians}.
\newblock {\em Advances in Mathematics}, 229(5):2950--2984, 2012.

\bibitem{Postnikov2005}
Alexander Postnikov.
\newblock Affine approach to quantum schubert calculus.
\newblock {\em Duke Mathematical Journal}, 128(3):473--509, 2005.

\bibitem{Postnikov2006}
Alexander Postnikov.
\newblock {Total positivity, Grassmannians, and networks}.
\newblock {\em ArXiv preprint math/0609764}, pages 1--79, 2006.

\bibitem{Stanley1984}
Richard~P Stanley.
\newblock On the number of reduced decompositions of elements of coxeter
  groups.
\newblock {\em European J. Combin}, 5(4):359--372, 1984.

\end{thebibliography}
\label{sec:biblio}

\end{document}